\documentclass[12pt,notitlepage,a4paper,reqno,ps]{amsart}
\usepackage{eurosym}
\usepackage{amssymb}
\usepackage{amstext}
\usepackage{color}
\usepackage{enumitem}
\usepackage{esint}
\setcounter{MaxMatrixCols}{10}
\setlength\parindent{0pt}

\textwidth=165mm \textheight=220mm \hoffset=-20mm \voffset=-10mm

\newcommand{\N}{\mathbb{N}}
\newcommand{\R}{\mathbb{R}}
\newcommand{\M}{\mathcal{M}}
\newcommand{\C}{\mathcal{C}}

\newcommand{\medint}{-\kern  -,375cm\int}

\newcommand{\textmean}[1]{- \hskip-.9em \int_{#1}}


\theoremstyle{plain}
\newtheorem{theorem}{Theorem}[section]
\newtheorem{lemma}[theorem]{Lemma}
\newtheorem{proposition}[theorem]{Proposition}
\newtheorem{corollary}[theorem]{Corollary}

\newtheorem{remark}[theorem]{Remark}
\newtheorem{definition}[theorem]{Definition}


\def\e{\varepsilon}
\numberwithin{equation}{section}
\makeatletter

\renewcommand{\p@enumi}{\thesection.}
\makeatother \pagestyle{myheadings} \allowdisplaybreaks

\newenvironment{giacomorev}{\color{blue}}{\color{black}}
\newcommand{\bg}{\begin{giacomorev}}
	\newcommand{\eg}{\end{giacomorev}}
\definecolor{vg}{rgb}{0.1, 0.3, 0.15}

\title{Homogenization of non-local integral functionals via two-scale Young measures}

\author{Giacomo Bertazzoni}
\address{ Dipartimento di Matematica e Informatica, Universit\'a degli studi di Ferrara, Via Machiavelli 35, 44121 Ferrara, Italy}
\email{brtgcm@unife.it}
\author{Andrea Torricelli}
\address{Dipartimento di Scienze Matematiche "G. L. Lagrange", Via Duca degli Abruzzi 24, 10129 Torino, Italy}
\email{andrea.torricelli@polito.it}
\author{Elvira Zappale}
\address{Dipartimento di Scienze di Base ed Applicate per l'Ingegneria, Sapienza, Universit\'a di Roma, via Antonio Scarpa 16, 00161 Roma, Italy,  and CIMA, Universidade de \'Evora, Portugal}
\email{elvira.zappale@uniroma1.it}

\begin{document}
	\maketitle

	\begin{abstract}
		We prove a homogenization result in terms of two-scale Young measures for non-local integral functionals. The result is obtained by means of a characterization of two-scale Young measures.
		
		{\scriptsize
			\textbf{Keywords}: Homogenization, non-local functionals, relaxation, two-scale Young measures, $\Gamma$-convergence\par
			\smallskip
			\textbf{2020 Mathematics Subject Classification}: 49J45, 26B25, 74G65}
	\end{abstract}
	
	\maketitle
	
	\begin{center}
		\fbox{\today}
	\end{center}
	
	
	\section{Introduction and Main Results}

	Homogenization has attracted much attention in last five decades, due to the many applications that mixtures and finely heterogeneous media have in applied science and technology, see for instance \cite{MS} and \cite{CEDA} among a wider pioneering literature. 
	In fact both classical and more recent theories of
	ferro-magnetics and mechanics provide an accurate macroscopic description of phenomena occuring in materials which incorporate fine structures at the microscopic level. We refer to \cite{AMMZ, BB1, BF, BMZ, DDI, DEZ, FZ,  FTGNZ, FTN} for applications to the theory of structured deformations, thin structures, non simple materials, evolution problems with nonstandard growth, elastomers, electromagnetic phenomena and constrained models. Here we want to adopt the homogenization techniques to formalize how the effects at micro-level
	emerge at the macro-level in the study of more general phenomena, such as peridynamics, where classical gradient are replaced by non classical one and, consequently non-local integral functions take over the role of standard integrals. 
	
	Indeed, adopting a different view point than \cite{BCPD}, the aim of this paper consists of studying the asymptotic behavior as $\varepsilon \searrow 0^+$ of  $I_\varepsilon:L^p(\Omega;\mathbb R^d) \to \mathbb R^+$ defined as
	\begin{align}\label{functI}
		I_\varepsilon(u):=\iint_{\Omega \times \Omega} W\left(x,x', \frac{x}{\varepsilon}, \frac{x'}{\varepsilon}, u(x), u(x')\right)dx dx'
	\end{align}
	where $\Omega \subset \mathbb R^N$, 
	is an open subset,
	$u:\Omega \to \mathbb R^d$ is in some Lebesgue space $L^p$, $1<p<\infty$
	and the
	integrand $W:\Omega \times \Omega \times\mathbb R^N \times \mathbb R^N \times\mathbb R^d \times \mathbb R^d \to \mathbb [0,+\infty)$
	has some measurability and continuity properties to be specified later
	and $W(\cdot,\cdot,\xi,\eta)$ is  periodic in the first two variables, i.e. 
	we will equivalently denote  $I_\varepsilon$ by 
	\begin{align*}
		I_\varepsilon(u)=\iint_{\Omega \times \Omega}W\left(x, x,'\Big\langle\frac{x}{\varepsilon} \Big\rangle, \Big\langle\frac{x'}{\varepsilon}\Big\rangle, u(x), u(x')\right)dx dx',
	\end{align*}
	where $\langle t \rangle$ denotes the vector in $(0,1)^N$ whose components are the fractional parts of $t \in \mathbb R^N$.

	This kind of non-local functional appears  in the mathematical modeling
	of peridynamics \cite{S}, ferromagnetics \cite{R}, etc.
	For instance, in the first setting, i.e. peridynamics: $\Omega$ represents the reference configuration, $u$ is the deformation and $I_\varepsilon$ is the associated energy.
	
	In general $I_\varepsilon$ is not lower semicontinuous hence the existence of minimizers is not guaranteed, and one should look for its relaxed version, for which we refer to \cite{BMC18},  for instance, in order to detect the microstructure of the material in the context of nonlinear elasticity.
	Here we follow an analogous approach in terms of suitable Young measures,
	extending first
	$L^p$
	to the
	space of Young measures equipped with the narrow topology.\\
	Thus, we can provide a full
	characterization of the $\Gamma$-limit. In fact, the Young measures are so far a crucial tool for obtaining a limiting formula. 
	We refer to \cite{ABM,FLbook, Rindlerbook}, etc.,  for background on Young measures and to \cite{BBS2008, Barchiesi2006, Barchiesi2007, Pedregal2006} for the two-scales Young measures that are a key tool in this homogenization setting.

	As already emphasized in \cite{BMC18, KZSIMA, Pedregal2021, Pedregal2024}, the relaxation and homogenization of \eqref{functI} in
	$L^p$ is a very delicate issue; in fact, it is not clear if the limiting energy  (as $\varepsilon \to 0$) has still the same form.
	
	In our subsequent analysis, we will make use of the two-scale Young measures introduced by \cite{E} and later developped by \cite{V2}, defined in Definition \ref{def-two-scale}, to compute, via $\Gamma$-convergence with respect to a suitable topology, the limit energy of \eqref{functI}, under the assumption on the energy density $W$ of being an admissible integrand (as introduced in \cite{BB}) in the sense of Definition \ref{admisint}.
	We also stress that there is no loss of generality in assuming that $W$ is {\it symmetric}, as observed in \cite{Pedregal2016} and \cite{BMC18}, i.e. 
	\begin{equation}\label{symeq}
		W(x, x_0, 
		y, y_0, \xi, \xi_0)
		= W(x_0
		, x, y_0
		, y, \xi_0,\xi) \end{equation}
	for a.e. $x, x_0 \in  \Omega$, and all $ y, y_0 \in Q,  \xi, \xi_0 \in \mathbb R^d.$

	\begin{theorem}
		\label{hom}
		Let $p > 1$ and assume $W: \Omega \times \Omega \times Q \times Q \times \R^d \times \R^d \rightarrow [0,\infty]$ is a symmetric admissible integrand and there exists $a, \alpha \in L^1(\Omega \times \Omega)$ and $c > 0$ such that
		\begin{equation}
			\label{pgrowth}
			\alpha(x,x') + \frac{1}{c}|\xi|^p \le W(x,x', y, y', \xi, \xi') \le a(x,x') + c(|\xi|^p + |\xi'|^p)
		\end{equation}
		for a.e. $x,x' \in \Omega$, all $y, y' \in \R^d$ and $\xi, \xi' \in \mathbb R^d$. Let $\varepsilon>0$ and let $\{I_\varepsilon\}_\varepsilon$ be the family of functionals in \eqref{functI}. Then, $\{I_\varepsilon\}_\varepsilon$ $\Gamma$-converges, with respect to the weak $L^p(\Omega;\mathbb R^d)$ convergence, to $I_{hom}:L^p(\Omega,\R^d)\to [0,+\infty)$, where $I_{hom}$ is defined as
		\begin{equation*}
			I_{hom}(u):=\min_{\nu \in \mathcal{M}_u} \iint_{\Omega\times\Omega}\iint_{Q\times Q}\iint_{\R^{d}\times\R^{d}} W(x, x', y, y', \xi,\xi')d\nu_{(x,y)}(\xi) d\nu_{(x',y')}(\xi')dydy'dxdx',
		\end{equation*}
		where 
		\begin{align}
			\label{Mu}
			\nonumber \mathcal{M}_u := \Big\{ \nu \in L^{\infty}_w(\Omega \times Q, \mathcal{M}(\R^d)): &\{ \nu_{(x,y)}\}_{(x,y)\in \Omega \times Q}   \text{ is a two-scale Young measure such that} \\
			& \int_Q \int_{\R^d} \xi d\nu_{(x,y)}(\xi)dy = u(x) \Big\}.
		\end{align}
		
	\end{theorem}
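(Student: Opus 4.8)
The plan is to establish the two $\Gamma$-convergence inequalities after lifting the functionals to the space of two-scale Young measures. It is convenient to set, for $\nu\in\mathcal M_u$,
\[
\mathcal I(\nu):=\iint_{\Omega\times\Omega}\iint_{Q\times Q}\iint_{\R^d\times\R^d}W(x,x',y,y',\xi,\xi')\,d\nu_{(x,y)}(\xi)\,d\nu_{(x',y')}(\xi')\,dy\,dy'\,dx\,dx',
\]
so that $I_{hom}(u)=\min_{\nu\in\mathcal M_u}\mathcal I(\nu)$. I would prove, in order, the $\liminf$ inequality, the attainment of the minimum (so that $I_{hom}$ is well defined), and the $\limsup$ inequality, i.e. the existence of a recovery sequence.

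For the $\liminf$ inequality, let $u_\varepsilon\rightharpoonup u$ in $L^p(\Omega;\R^d)$; passing to a subsequence we may assume that the $\liminf$ is a finite limit. The coercivity in \eqref{pgrowth} yields a uniform $L^p$ bound on $\{u_\varepsilon\}$, whence by the fundamental theorem for two-scale Young measures a (not relabelled) subsequence generates a two-scale Young measure $\nu=\{\nu_{(x,y)}\}$. Weak convergence identifies the barycenter $\int_Q\int_{\R^d}\xi\,d\nu_{(x,y)}(\xi)\,dy=u(x)$, so $\nu\in\mathcal M_u$. Using the admissibility of $W$, which allows one to approximate $W$ from below by Carath\'eodory integrands of controlled growth, together with the product structure of the two-scale Young measure generated by the pair $\bigl(u_\varepsilon(x),u_\varepsilon(x')\bigr)$, I would pass to the limit to obtain $\lim_\varepsilon I_\varepsilon(u_\varepsilon)\ge\mathcal I(\nu)\ge I_{hom}(u)$.

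The attainment of the minimum follows by the direct method: the coercivity makes $\mathcal M_u$ sequentially compact in the weak-$*$ topology of $L^\infty_w(\Omega\times Q,\mathcal M(\R^d))$, and the map $\nu\mapsto\mathcal I(\nu)$ is lower semicontinuous along this topology by the same admissibility argument used above, so a minimizer $\bar\nu\in\mathcal M_u$ exists. For the $\limsup$ inequality I would then invoke the characterization of two-scale Young measures to produce a generating sequence $u_\varepsilon$ for $\bar\nu$ with $u_\varepsilon\rightharpoonup u$ in $L^p$; the upper bound in \eqref{pgrowth} gives equi-integrability of the densities $W(\cdot,\cdot,\cdot/\varepsilon,\cdot/\varepsilon,u_\varepsilon,u_\varepsilon)$, and combined with the two-scale convergence this yields $I_\varepsilon(u_\varepsilon)\to\mathcal I(\bar\nu)=I_{hom}(u)$.

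I expect the main obstacle to be the limit passage in the $\liminf$ step: one must show that the values $u_\varepsilon(x)$ and $u_\varepsilon(x')$ at distinct points decouple in the limit, so that the joint Young measure of the pair factorizes as the product $\nu_{(x,y)}\otimes\nu_{(x',y')}$, while the microscopic variables $x/\varepsilon$ and $x'/\varepsilon$ are captured simultaneously by the two-scale structure. It is this product/decoupling phenomenon, rather than any single pointwise estimate, that makes the non-local, two-scale setting delicate and that justifies the tensorized form of $I_{hom}$.
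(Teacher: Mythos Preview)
Your outline is essentially the paper's proof: the $\liminf$ and $\limsup$ inequalities are obtained exactly as you describe, via the product structure $\Lambda_{(x,x',y,y')}=\nu_{(x,y)}\otimes\nu_{(x',y')}$ of the two-scale Young measure generated by the pair $(u_\varepsilon(x),u_\varepsilon(x'))$ (Proposition~\ref{Pedregal1997}) and the two-scale analogue of the Fundamental Theorem on Young measures (Theorem~\ref{theoremBarchiesi}). You also correctly single out the decoupling phenomenon as the crux.

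Two points deserve mention. First, in the $\limsup$ step you say that the upper growth bound in \eqref{pgrowth} gives equi-integrability of $W(\cdot,\cdot,\cdot/\varepsilon,\cdot/\varepsilon,u_\varepsilon,u_\varepsilon)$. This is only true if $\{|u_\varepsilon|^p\}$ is itself equi-integrable, which does not follow from mere boundedness in $L^p$; the paper inserts the Decomposition Lemma at this point to replace the generating sequence by a $p$-equi-integrable one without changing the two-scale Young measure. You should do the same.

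Second, your route to the attainment of the minimum is genuinely different. You propose the direct method on $\mathcal M_u$: sequential compactness of minimizing sequences and lower semicontinuity of $\nu\mapsto\mathcal I(\nu)$ in the weak-$*$ topology. This works, but it requires you to verify that weak-$*$ limits of two-scale Young measures with uniformly bounded $p$-moments and fixed barycenter $u$ remain in $\mathcal M_u$ (a non-homogeneous analogue of Lemma~\ref{lemmf}, obtainable from Theorem~\ref{mainresult}) and to prove lower semicontinuity of the quadratic-in-$\nu$ functional $\mathcal I$. The paper avoids both tasks: having established that the $\Gamma$-limit equals $\inf_{\nu\in\mathcal M_u}\mathcal I(\nu)$, it takes any recovery sequence $\{\bar u_n\}$ for $u$, extracts from it (via Decomposition Lemma and Theorem~\ref{theoremBarchiesi}(ii)) a two-scale Young measure $\nu\in\mathcal M_u$ with $\mathcal I(\nu)$ equal to the $\Gamma$-limit, hence a minimizer. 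This is shorter and reuses the machinery already in place; your direct-method argument would also work but adds extra verification.
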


	\begin{remark}
		\label{remark1}
		We would like to underline that in the previous theorem we chose the image set $[0,+\infty]$ for semplicity's sake. Indeed, the result still holds if we assume $W$ bounded from below and having values in $\R\cup\{\infty\}$. 
	\end{remark}
	
	In order to prove Theorem \ref{hom}, we characterize two-scale Young measures by proving the following result.
	We will use the strategy introduced in \cite{KP1, KP2} and later adopted in \cite{BBS2008}, (and in \cite{FNTZ} for the Sobolev-Orlicz setting). See also \cite{Barchiesi2006} and \cite{Rindlerbook} where some parts of our results have been proven.

	\begin{theorem}
		\label{mainresult}
		Let $Q :=(0,1)^N$, $\Omega$ be an open and bounded subset of $\R^N$ with Lipschitz boundary, and $\nu\in L_w^\infty(\Omega\times Q, \mathcal{M}(\R^{d}))$ be such that $\nu_{(x,y)}\in\mathcal{P}(\R^{d})$ for a.e. $(x,y)\in \Omega\times Q$. Then, the family $\{\nu_{(x,y)}\}_{(x,y)\in \Omega\times Q}$ is a two-scale Young measure if and only if the following conditions hold
		\begin{itemize}
			\item [i)] there exists $1<p<+\infty$ such that
			\[
			(x,y) \mapsto \int_{\R^{d}} |\xi|^p d\nu_{(x,y)}(\xi)\in L^1(\Omega\times Q);
			\]
			\item[ii)] there exists $u_1\in L^p(\Omega,L^p_{per}(Q))$ such that
			\[\int_{\R^{d}}\xi d\nu_{(x,y)}(\xi)=u_1(x,y),\]
			for a.e. $(x,y)\in \Omega\times Q;$
			\item[iii)] for every $f\in \mathcal{E}_p$
			\[
			\int_Q\int_{\R^{d \times N}} f(y,\xi)d\nu_{(x,y)}(\xi)dy\ge f_{hom}(u(x)) \quad \text{for a.e. }x\in \Omega,
			\]
			where
			\begin{align}\label{Ep}
				{\mathcal E}_p:= &\Big\{f: \overline{Q} \times \mathbb R^d \rightarrow \mathbb R \text{ continuous and such that exists} \\
				&\lim_{|\xi| \rightarrow +\infty} \frac{f(y, \xi)}{1 + |\xi|^p} 
				\text{ uniformly with respect to } y \in \overline{Q}\big\},\label{limunif} 
			\end{align}
			\[
			u(x):=\int_Q u_1(x,y)dy,
			\]
			and for every $\xi \in \mathbb R^d$, 
			$f_{hom}(\xi):= $
			\begin{equation}
				\label{f_hom}
				\lim_{T \rightarrow +\infty} \frac{1}{T^N} \inf \left\{\int_{(0,T)^N} f(\langle x \rangle , \xi + v(x)) dx : v \in L^p((0,T)^N, \R^d), \int_{(0,T)^N} v(x) dx = 0 \right\}.
			\end{equation}
		\end{itemize}
	\end{theorem}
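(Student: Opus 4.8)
The plan is to establish the two implications separately, adapting the Kinderlehrer--Pedregal duality scheme of \cite{KP1, KP2} to the two-scale framework as in \cite{BBS2008}. Necessity of i)--iii) is the soft direction. Assuming $\{\nu_{(x,y)}\}$ is generated, in the sense of Definition \ref{def-two-scale}, by a sequence $\{u_\varepsilon\}$ bounded in $L^p(\Omega;\R^d)$, condition i) follows from the $L^p$ bound and the lower semicontinuity of $\xi\mapsto|\xi|^p$ along generating sequences, while ii) follows by identifying $u_1(x,y)=\int_{\R^d}\xi\,d\nu_{(x,y)}(\xi)$ with the two-scale limit of $\{u_\varepsilon\}$, which lies in $L^p(\Omega,L^p_{per}(Q))$ by the two-scale convergence theory. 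For iii) I would fix $f\in\mathcal{E}_p$, test the generation against $f(x/\varepsilon,u_\varepsilon(x))$ on small macroscopic cubes, and localize at Lebesgue points; the two-scale limit of $\int f(x/\varepsilon,u_\varepsilon)\,dx$ is exactly $\int\int_Q\int_{\R^d} f\,d\nu_{(x,y)}\,dy\,dx$, and the standard homogenization lower bound together with the subadditive cell formula \eqref{f_hom} forces this to dominate $\int f_{hom}(u(x))\,dx$, yielding iii) after division and shrinking.

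The substance is the converse, which I would split into two steps. The first, homogeneous step treats families $\{\nu_y\}_{y\in Q}$ independent of $x$, with barycenter $\int_Q\int_{\R^d}\xi\,d\nu_y(\xi)\,dy=\xi_0$ and satisfying $\int_Q\int_{\R^d}f(y,\xi)\,d\nu_y(\xi)\,dy\ge f_{hom}(\xi_0)$ for all $f\in\mathcal{E}_p$; the claim is that such a family is a two-scale Young measure. This is a bipolar statement: the set $\mathcal{Y}_{\xi_0}$ of homogeneous two-scale Young measures with barycenter $\xi_0$ is closed and convex inside $(\mathcal{E}_p)^*$, and the crucial identity $\min_{\mu\in\mathcal{Y}_{\xi_0}}\int_Q\int_{\R^d}f(y,\xi)\,d\mu_y(\xi)\,dy=f_{hom}(\xi_0)$ holds, the right-hand side being precisely the asymptotic cell problem \eqref{f_hom}. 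A Hahn--Banach separation then shows that a family failing membership would violate some inequality in iii), so iii) forces $\{\nu_y\}\in\mathcal{Y}_{\xi_0}$; conversely the near-minimizers of the scale-$T$ problems in \eqref{f_hom}, rescaled to oscillate at frequency $1/\varepsilon$ and superimposed on the statistics $\nu_y$, produce an explicit generating sequence.

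In the second step I would remove the restriction to $x$-independence by localization and gluing. Using the measurability of $(x,y)\mapsto\nu_{(x,y)}$ and the integrability in i)--ii), I would approximate $\nu$ by families piecewise constant in $x$ on a fine partition of $\Omega$ into small cubes, on each of which the frozen family satisfies the homogeneous hypotheses up to a controlled error and is therefore generated by the first step. Patching the local sequences, exploiting $p$-equiintegrability to annihilate interface contributions and the metrizability of the narrow topology on $L^p$-bounded sets to test convergence on a countable dense subset of $\mathcal{E}_p$, a diagonal argument produces a single sequence generating $\nu_{(x,y)}$. The main obstacle I anticipate is the homogeneous step, specifically verifying that the minimal energy over $\mathcal{Y}_{\xi_0}$ coincides with the concrete cell formula \eqref{f_hom} and then building sequences that simultaneously realize the microscopic oscillation $y=x/\varepsilon$ and the prescribed marginals $\nu_y$; the joint handling of the two scales, together with the equiintegrability required to prevent concentration when gluing, is the most delicate part of the argument.
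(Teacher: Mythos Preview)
Your proposal is correct and follows essentially the same route as the paper: necessity via two-scale limits and the homogenization lower bound (Theorem \ref{neccond}), homogeneous sufficiency via Hahn--Banach separation in $(\mathcal{E}_p)'$ after establishing that $M_F$ is convex and weak*-closed (Lemmas \ref{lemmf} and \ref{suffhom}), and then localization and gluing for the general case (Theorem \ref{necThmmain}, which also reduces first to $u=0$ by translation and uses a Vitali-type covering rather than a cube partition). One clarification on your anticipated obstacle: once Hahn--Banach yields $\nu\in \mathcal{Y}_{\xi_0}=M_F$ the generating sequence comes for free from the \emph{definition} of $M_F$, so the near-minimizers of \eqref{f_hom} are needed only to prove the inequality $f_H\le f_{hom}$ (the $\le$ half of your ``crucial identity''), not to ``superimpose'' anything on $\nu_y$ or generate $\nu$ directly.
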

	
	For a generalization of the above characterization when $p=1$, in the constrained case (i.e., $u$ satisfying a suitable PDE, not necessarily {\rm curl}), we refer to \cite{ARD}, leaving to future investigation the study of non-local models as in \eqref{functI} when $W$ satisfies a linear growth condition (i.e. \eqref{pgrowth} for $p=1$).
	To conclude this section, we point out Theorem \ref{hom} holds true in the multi-scale case (see \cite{Barchiesi2006, Pedregal2006} for a discussion on the topic). Indeed, the techniques used in the proof (under appropriate symmetry conditions) can be similarly replicated in the case of multiple scales.

	The paper is organized as follows. Section \ref{Pre} is devoted to notation and preliminaries. In section \ref{Intermediate} we prove some technical results and, in particular, Theorem \ref{mainresult}. Finally, in section \ref{MR} we prove the homogenization result, i.e. Theorem \ref{hom} and we compare our limiting formula with the relaxed energy obtained in \cite{BMC18} in the homogeneous isotropic setting.
	
	\section{Preliminaries}\label{Pre}
	We start by fixing the notation that we will use in the paper. 
	\subsection{Notation}
	
	Throughout this work,  $1<p<+\infty$, $\Omega\subset \mathbb R^N$ is an open bounded set with Lipschitz boundary, $\mathcal{A}(\Omega)$ denotes the family of the open subsets of $\Omega$, and for any $m \in \mathbb N, \mathcal{L}^m$ is the Lebsegue measure in $\R^m$. $Q := (0,1)^N$ is the unit cube in $\R^N$. The symbols $\langle \cdot \rangle$ and $[\cdot]$ stand, respectively, for the fractional and integer part of a number, or a vector componentwise. The Dirac mass at a point $a \in \R^m$ is denoted by $\delta_a$. For every Lebesgue measurable set $A \subset \mathbb R^m$, the symbol $\textmean{A}$ stands for the average $(\mathcal{L}^m(A))^{-1}\int_A$.\\
	Let $U$ be an open subset of $\R$. Then:
	\begin{itemize}
		\item[-] For any linear space $X$, by $X'$ we denote its dual space.
		\item[-] $\mathcal{C}_c(U)$ is the space of the continuous functions $f: U \rightarrow \R$ with compact support.
		\item[-]$\mathcal{C}^\infty_{per}(Q)$ is the space of $Q$-periodic functions on $\R^N.$
		\item[-]  $\mathcal{C}^{\infty}_c(U)$ is the space of the smooth functions $f: U \rightarrow \R$ with compact support.
		\item[-] $\mathcal{C}_0(U)$ is the closure of $\mathcal{C}_c(U)$ with respect to the uniform convergence; it coincides with the space of the continuous functions $f: U \rightarrow \R$ such that, for every $\eta > 0$, there exists a compact set $K_{\eta} \subset U$ with $|f| < \eta$ on $U \setminus K_{\eta}$.
		\item[-] $\mathcal C_b(U;\mathbb R^m)$ is the space of continuous and bounded functions from $U$ to $\mathbb R^m$.
		\item[-] $\mathcal{M}(U)$ is the space of real-valued Radon measures with finite total variation. We recall that, by the Riesz Representation Theorem, $\mathcal{M}(U)$ can be identified with the dual space of $\mathcal{C}_0(U)$ through the duality
		\[ \langle \mu, \phi \rangle = \int_U \phi\,d\mu,\,\,\,\mu \in \mathcal{M}(U),\,\,\phi \in \mathcal{C}_0(U).\]
		\item[-] $\mathcal{P}(U)$ denotes the space of probability measures on $U$, i.e. the space of all $\mu \in \mathcal{M}(U)$ such that $\mu \ge 0$ and $\mu(U) = 1$.
		\item[-] $L^1(\Omega, \mathcal{C}_0(U))$ is the space of maps $\phi: \Omega \rightarrow \mathcal{C}_0(U)$ such that:
		\begin{itemize}
			\item[$(i)$] $\phi$ is strongly measurable, i.e. there exists a sequence of simple functions $s_n: \Omega \rightarrow \mathcal{C}_0(U)$ such that $\|s_n(x) - \phi(x)\|_{\mathcal{C}_0(U)} \rightarrow 0$ for a.e. $x \in \Omega$;
			\item[$(ii)$] $x \mapsto \|\phi(x)\|_{\mathcal{C}_0(U)} \in L^1(\Omega)$.
		\end{itemize}
		\item[-] $L^{\infty}_w(\Omega, \mathcal{M}(U))$ is the space of maps $\nu: E \rightarrow \mathcal{M}(U)$ such that:
		\begin{itemize}
			\item[$(i)$] $\nu$ is weak* measurable, i.e. $x \mapsto \langle \nu_x, \phi \rangle$ is measurable for every $\phi \in \mathcal{C}_0(U)$;
			\item[$(ii)$] $x \mapsto \|\nu_x\|_{\mathcal{M}(U)} \in L^{\infty}(\Omega)$.
			The space $L^{\infty}_w(\Omega, \mathcal{M}(U))$ can be identified with the dual space of  $L^1(\Omega, \mathcal{C}_0(U))$ via the duality
			\[ \langle \mu, \phi \rangle = \int_\Omega \int_U \phi(x, \xi) d \mu_x(\xi)dx, \,\,\, \mu \in L^{\infty}_w(\Omega, \mathcal{M}(U)), \phi \in L^1(\Omega, \mathcal{C}_0(U)),\]
			where $\phi(x,\xi):= \phi(x)(\xi)$ for all $(x, \xi) \in \Omega \times U$.
		\end{itemize}
		\item[-] The space $L^p_{per}(Q)$ stands for the $L^p$-closure of all functions $f \in \mathcal{C}(\R^N)$ which are $Q-$periodic.
		\color{black}
	\end{itemize}
	
	\subsection{Young measures} 
	Young measures and two-scale Young measures (introduced in \cite{E} and later developed by Pedregal in \cite{Pedregal1997} and Valadier in \cite{V2}), are a key tool in our subsequent analysis. Here we recall the definitions and main properties, starting from the theory of classical Young measures, see \cite{V0,V1} and \cite{FLbook, Rindlerbook}. 
	\begin{definition}[Young measures]
		\label{YM}
		Let $\nu \in L^{\infty}_w(\Omega, \mathcal{M}(\R^m))$, and for every $n\in \mathbb N$ let $z_n: \Omega \rightarrow \R^m$ be a measurable function. The family of measures $\{ \nu_x\}_{x \in \Omega}$ is said to be the Young measure generated by the sequence $\{z_n\}$ provided $\nu_x \in \mathcal{P}(\R^m)$ for a.e. $x \in \Omega$ and 
		\[ \delta_{z_n} \overset{*}{\rightharpoonup} \nu \text{ in } L^{\infty}_w(\Omega, \mathcal{M}(\R^m)),\]
		i.e. for all $\psi \in L^1(\Omega, \mathcal{C}_0(\R^m))$
		\[ \lim_{n \rightarrow +\infty} \int_\Omega \psi(x, z_n(x))\,dx = \int_\Omega\int_{\R^m} \psi(x,\xi)\,d\nu_x(\xi)dx.\]
		The family $\{\nu_x\}_{x \in \Omega}$ is said to be a homogeneous Young measure if the map $x \mapsto \nu_x$ is indipendent of $x$. In this case, the family $\{\nu_x\}_{x \in \Omega}$ is identified with a single element $\nu$ of $\mathcal{M}(\R^m)$.
	\end{definition}
	
	A proof of this result can be found in \cite{FLbook}.
	
	\begin{theorem}[Fundamental Theorem on Young measures]
		\label{theoremYoung}
		Let $\{ z_n \}$ be a sequence of measurable functions $z_n: \Omega \rightarrow \R^m$. Then, there exists a subsequence $\{ z_{n_k}\}$ and $\nu \in L^{\infty}_w(\Omega, \mathcal{M}(\R^m))$ with $\nu_x \ge 0$ for a.e. $x \in \Omega$, such that $\delta_{z_{n_k}} \overset{*}{\rightharpoonup} \nu$ in $L^{\infty}_w(\Omega, \mathcal{M}(\R^m))$ and the following properties hold:
		\begin{itemize}
			\item[$(i)$] $ \|\nu_x\|_{\M(\R^m)} = \nu_x(\R^m) \le 1$ for a.e. $x \in \Omega$;
			\item[$(ii)$] if $dist(z_{n_k}, K) \rightarrow  0$ in measure for some closed set $K \subset  \R^m$, then $Supp(\nu_x) \subset K$ for a.e. $x \in \Omega$;
			\item[$(iii)$] $\|\nu_x\|_{\M(\R^m)} = 1$ if and only if there exists a Borel function $g: \R^m \rightarrow [0, +\infty]$ such that
			\[ \lim_{|\xi| \rightarrow +\infty} g(\xi) = +\infty\,\text{ and }\, \sup_{k \in \N} \int_\Omega g(z_{n_k}(x))\, dx < +\infty;\]
			\item[$(iv)$] if $f: \Omega \times \R^m \rightarrow [0, +\infty]$ is a normal integrand, then
			\[ \liminf_{k \rightarrow +\infty} \int_\Omega f(x, z_{n_k}(x))\,dx \ge \int_\Omega\int_{\R^m} f(x,\xi)\,d\nu_x(\xi)\,dx;\]
			\item[$(v)$] if $(iii)$ holds and if $f: \Omega \times \R^m \rightarrow [0, +\infty]$ is a Carathéodory integrand such that the sequence $\{ f(\cdot, z_{n_k})\}$ is equi-integrable then
			\[ \lim_{k \rightarrow \infty} \int_\Omega f(x, z_{n_k}(x))\,dx = \int_\Omega\int_{\R^m} f(x, \xi)\,d\nu_x(\xi)dx.\]
		\end{itemize}
	\end{theorem}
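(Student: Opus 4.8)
The plan is to prove the two implications separately, following the duality strategy of Kinderlehrer and Pedregal \cite{KP1, KP2} adapted to the two-scale setting, treating $f_{hom}$ as the analogue of the quasiconvex envelope. Throughout, condition i) guarantees that any generating sequence may be taken $p$-equi-integrable, so that part (v) of Theorem \ref{theoremYoung} applies and the relevant passages to the limit are justified.

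\emph{Necessity.} Suppose $\{\nu_{(x,y)}\}$ is a two-scale Young measure in the sense of Definition \ref{def-two-scale}, generated by a sequence $\{z_n\}$ associated with scales $\eps_n \searrow 0$. Condition i) is immediate: testing against $|\xi|^p$ and using the lower semicontinuity inequality (iv) of Theorem \ref{theoremYoung} (in its two-scale form) yields $\int_{\Omega \times Q}\int_{\R^d}|\xi|^p\,d\nu_{(x,y)}\,dy\,dx \le \liminf_n \int_\Omega |z_n|^p\,dx < +\infty$, the last bound coming from the $p$-moment control built into the definition. For ii), the barycenter $u_1(x,y) := \int_{\R^d}\xi\,d\nu_{(x,y)}(\xi)$ is well defined by i) and Jensen's inequality, it inherits $Q$-periodicity in $y$ from the two-scale structure, and $u_1 \in L^p(\Omega, L^p_{per}(Q))$ again follows from i). Condition iii) is the substantive one: fixing $f \in \mathcal{E}_p$ and localizing on a small cube $Q_r(x_0) \subset \Omega$, one writes the left-hand side as $\lim_n \mean{Q_r(x_0)} f(\langle x/\eps_n\rangle, z_n(x))\,dx$; comparing any admissible competitor of the cell problem \eqref{f_hom} with the rescaled oscillations of $z_n$ shows this limit is bounded below by $\mean{Q_r(x_0)} f_{hom}(u(x))\,dx$, and letting $r \to 0$ at Lebesgue points of $x \mapsto \int_Q\int f\,d\nu_{(x,y)}\,dy$ and of $u$ gives the pointwise inequality.

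\emph{Sufficiency.} Here the argument is reversed, and the core is the homogeneous case: assume first $\nu_{(x,y)} = \mu_y$ is independent of $x$, with prescribed mean $\xi_0 = \int_Q\int_{\R^d}\xi\,d\mu_y(\xi)\,dy$, satisfying i) and the inequality in iii). We regard the set $\mathcal{H}_{\xi_0}$ of homogeneous two-scale Young measures with mean $\xi_0$ and finite $p$-moment as a subset of the dual of the separable space generated by $\mathcal{E}_p$, and we identify, via Hahn--Banach separation, the support functional of $\mathcal{H}_{\xi_0}$ in the direction $f$ with $f_{hom}(\xi_0)$; indeed, testing with the periodic competitors in \eqref{f_hom} exhibits $f_{hom}(\xi_0)$ as an infimum over the elements of $\mathcal{H}_{\xi_0}$. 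Consequently the inequalities in iii) force $\mu \in \overline{\mathcal{H}_{\xi_0}}$, the weak-$*$ closed convex hull. One then proves $\overline{\mathcal{H}_{\xi_0}} = \mathcal{H}_{\xi_0}$: a weak-$*$ limit of homogeneous two-scale Young measures with equi-integrable $p$-moments is again one, by a diagonal extraction based on Theorem \ref{theoremYoung} and the metrizability of the topology on bounded sets. Hence $\mu$ is a two-scale Young measure.

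Finally, to pass from the homogeneous to the general $x$-dependent measure, I would approximate $u_1(\cdot,y)$ and the slicing $x \mapsto \nu_{(x,\cdot)}$ by families that are piecewise constant in $x$ on a fine partition of $\Omega$ into cubes, apply the homogeneous construction on each cube to obtain local generating sequences with the correct microscopic average, and then glue them and diagonalize in the scale parameter to produce a single sequence generating $\nu$ on all of $\Omega$, checking convergence on a countable dense family of test functions with the uniform $p$-moment bound from i). The main obstacle is the sufficiency, and within it the duality step identifying $f_{hom}$ as the exact support function of $\mathcal{H}_{\xi_0}$ together with the closedness $\overline{\mathcal{H}_{\xi_0}} = \mathcal{H}_{\xi_0}$: one must verify that $\mathcal{E}_p$ separates finely enough to make the bipolar argument tight, and that the asymptotic cell-problem infimum in \eqref{f_hom}, taken over all periods $T$, is genuinely realized in the limit by admissible periodic oscillations compatible with the prescribed barycenter $u_1$.
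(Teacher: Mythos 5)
There is a fundamental mismatch: your proposal does not prove the assigned statement. The statement is Theorem \ref{theoremYoung}, the classical Fundamental Theorem on Young measures: for \emph{any} sequence of measurable functions $z_n:\Omega\to\R^m$ there is a subsequence such that $\delta_{z_{n_k}}\overset{*}{\rightharpoonup}\nu$ in $L^\infty_w(\Omega,\mathcal{M}(\R^m))$, together with the properties (i)--(v) (mass bound, support localization, the tightness criterion, lower semicontinuity for normal integrands, and convergence for equi-integrable Carath\'eodory integrands). What you have sketched instead is a proof of Theorem \ref{mainresult}, the characterization of \emph{two-scale} Young measures through the $p$-moment condition, the barycenter $u_1$, and the Jensen-type inequality involving $f_{hom}$ for $f\in\mathcal{E}_p$ --- necessity by localization, sufficiency in the homogeneous case by Hahn--Banach separation, and the passage to $x$-dependent measures by piecewise-constant approximation and gluing. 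That is a different theorem, which the paper proves in Section \ref{Intermediate} (Theorems \ref{neccond} and \ref{necThmmain}, Lemmas \ref{lemmf}, \ref{suffhom} and \ref{asProp3.6BBS}) essentially along the lines you describe; but it is not the result you were asked to prove.

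Moreover, as a proof of Theorem \ref{theoremYoung} your text is circular: you explicitly invoke parts (iv) and (v) of Theorem \ref{theoremYoung} as known tools (``using the lower semicontinuity inequality (iv) of Theorem \ref{theoremYoung}'', ``part (v) of Theorem \ref{theoremYoung} applies''), so nothing in the proposal can establish that theorem itself. Note also that the paper contains no proof of this statement: it is quoted as standard, with the proof referred to \cite{FLbook}. An actual proof would run along entirely different lines: regard $\{\delta_{z_n}\}$ as a bounded sequence in $L^\infty_w(\Omega,\mathcal{M}(\R^m))$, the dual of the separable space $L^1(\Omega,\mathcal{C}_0(\R^m))$, extract a weak* convergent subsequence by Banach--Alaoglu; deduce (i) and (ii) by testing with cut-off functions; obtain (iii) from a tightness argument using the coercive function $g$; prove (iv) by approximating the normal integrand monotonically by Carath\'eodory integrands with compact support in $\xi$ and passing to the limit; and prove (v) by truncation combined with equi-integrability. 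None of these steps appears in your proposal.
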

	
	We recall the definition of two-scale Young measure, as introduced by \cite{Pedregal2006} and later developed by \cite{Barchiesi2006, Barchiesi2007, BBS2008} and recently extended to the case $p=1$ (i.e. generalized two-scale Young measures) by \cite{ARD}
	\begin{definition}[Two-scale Young measures]
		\label{def-two-scale}
		Let $\nu \in L^{\infty}_w(\Omega \times Q; \M(\R^d))$. The family $\{ \nu_{(x,y)}\}_{(x,y)\in \Omega \times Q}$ is said to be a two-scale Young measure if $\nu_{(x,y)} \in \mathcal{P}(\R^d)$ for a.e. $(x,y) \in \Omega\times Q$ and if for every sequence $\{ \varepsilon_n\} \rightarrow 0$ there exists a bounded sequence $\{ u_n\}$ in $L^{p}(\Omega, \R^d)$ such that for every $z \in L^1(\Omega)$ and $\varphi \in \C_0(\R^{N\times d} )$,
		\begin{align*}
			\lim_{n \rightarrow \infty}\int_\Omega z(x)\varphi\left( \left\langle \frac{x}{\varepsilon_n}\right\rangle, u_n(x)\right)dx 
			= \int_\Omega\int_Q\int_{\R^d} z(x)\varphi(y,\xi)d\nu_{(x,y)}(\xi)dydx.
		\end{align*}
		Namely, $\{ (\langle \cdot/\varepsilon_n \rangle, u_n)\}$ generates the Young measure $\{ \nu_{(x,y)} \otimes dy\}_{x \in \Omega}$ (see Definition \ref{YM}). Finally, we may call $\{ \nu_{(x,y)}\}_{(x,y) \in \Omega   \times Q}$ the two-scale Young measure associated to the sequence $\{u_n\}$. 
	\end{definition}
	We refer to \cite[Theorem 3.5]{Barchiesi2006}  and \cite[Subsection 2.3]{BBS2008} to compare the two-scale Young measure associated with $\{u_n\}$ (according to the above definition), with the Young measure associated to $\left\{\left(\langle\frac{x}{\varepsilon_n}\rangle, u_n(x)\right)\right\}$, which, in principle, at a naive look, could seem a different element in $L_w^\infty(\Omega; \mathbb T^N\times \mathbb R^d)$, where $\mathbb T^N$ is the torus in $\mathbb R^N$. In fact, for a better understanding of the above notion, it is important to recall the notion of two-scale convergence introduced by \cite{N} and later developed by \cite{Allaire1992} and exploited by many authors. 
	
	\begin{definition}[Two-scale convergence]
		Let $\{u_h\}$ be a sequence in $L^1(\Omega, \R^d)$. The sequence $\{u_h\}$ is said to be two-scale convergent to a function $u = u(x,y) \in L^1( \Omega \times Q, \R^d)$ if, for every component $u^i_h$ of $u_h$ and $u^i$ of $u$, $i=1,\dots, d$. 
		\[ \lim_{h \rightarrow \infty} \int_\Omega   \varphi(x)\phi\left( \frac{x}{\e_h}\right)u^i_h(x)dx = \iint_{\Omega \times Q} \varphi(x)\phi(y)u^i(x,y)dxdy, \]
		for any $\varphi \in \C_c^{\infty}(\Omega)$ and any $\phi \in \mathcal C_{per}^{\infty}(Q)$.  We simply write $u_h \rightsquigarrow u$.
	\end{definition}
	
	We recall \cite[Proposition 2.3]{Pedregal2006}.

	\begin{proposition}
		\label{Pedregal2006-2.3}
		Given a two-scale Young measure $\{\nu_{(x,y)}\}_{(x,y) \in \Omega \times Q}$ as in Definition \ref{def-two-scale}, then the mapping $u_1: \Omega \times Q \rightarrow \R^d$ defined by
		\[ u_1(x,y) = \int_{\R^d} \lambda d\nu_{(x,y)}(\lambda)\]
		is the two-scale limit of the sequence $\{u_n\}$ generating $\{\nu_{(x,y)}\}_{(x,y) \in \Omega \times Q}$.
	\end{proposition}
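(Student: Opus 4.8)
\section*{Proof proposal}

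The plan is to verify, for the fixed generating sequence $\{u_n\}$ and the associated vanishing scale $\{\varepsilon_n\}$ of Definition \ref{def-two-scale}, the defining property of two-scale convergence towards $u_1$. Thus, fixing a component $i \in \{1,\dots,d\}$, $\varphi \in \C_c^\infty(\Omega)$ and $\phi \in \C_{per}^\infty(Q)$, I must show
\[
\lim_{n\to\infty}\int_\Omega \varphi(x)\,\phi\!\left(\frac{x}{\varepsilon_n}\right)u_n^i(x)\,dx = \iint_{\Omega\times Q}\varphi(x)\phi(y)\,u_1^i(x,y)\,dx\,dy.
\]
Since $\phi$ is $Q$-periodic, $\phi(x/\varepsilon_n)=\phi(\langle x/\varepsilon_n\rangle)$, so the left-hand integrand has exactly the structure appearing in Definition \ref{def-two-scale}, with $z=\varphi\in L^1(\Omega)$ and target function $\psi(y,\xi):=\phi(y)\xi^i$. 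If $\psi$ were an admissible test function there, the claim would follow at once, because by the very definition of $u_1$ one has $\iint_{\Omega\times Q}\int_{\R^d}\varphi(x)\phi(y)\xi^i\,d\nu_{(x,y)}(\xi)\,dy\,dx=\iint_{\Omega\times Q}\varphi(x)\phi(y)u_1^i(x,y)\,dy\,dx$.

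The obstacle is that $\psi(y,\xi)=\phi(y)\xi^i$ has linear growth in $\xi$ and hence is \emph{not} admissible in Definition \ref{def-two-scale}, which only allows test functions vanishing at infinity in $\xi$. I intend to overcome this by truncation together with the equi-integrability furnished by the strict bound $p>1$. First, boundedness of $\{u_n\}$ in $L^p(\Omega;\R^d)$ gives, by H\"older's inequality, $\int_E|u_n|\,dx\le\|u_n\|_{L^p}|E|^{1-1/p}$ for every measurable $E\subset\Omega$, so that $\{|u_n|\}$ is equi-integrable and, by Chebyshev, $|\{|u_n|\ge R\}|\le C R^{-p}\to0$ uniformly in $n$. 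Second, applying the lower semicontinuity property (iv) of Theorem \ref{theoremYoung} with $f=|\xi|^p$ to the generating sequence $\{(\langle\cdot/\varepsilon_n\rangle,u_n)\}$, which by Definition \ref{def-two-scale} generates $\{\nu_{(x,y)}\otimes dy\}_{x\in\Omega}$, yields $\iint_{\Omega\times Q}\int_{\R^d}|\xi|^p\,d\nu_{(x,y)}(\xi)\,dy\,dx\le\liminf_n\|u_n\|_{L^p}^p<\infty$; in particular $u_1\in L^p(\Omega\times Q;\R^d)\subset L^1$ and $\iint_{\Omega\times Q}\int_{\R^d}|\xi|\,d\nu_{(x,y)}(\xi)\,dy\,dx<\infty$.

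I then introduce a cutoff $\chi_R\in\C_c(\R^d)$ with $0\le\chi_R\le1$, $\chi_R\equiv1$ on $\{|\xi|\le R\}$ and $\operatorname{supp}\chi_R\subset\{|\xi|\le 2R\}$, and set $\psi_R(y,\xi):=\phi(y)\xi^i\chi_R(\xi)$, which is continuous and compactly supported in $\xi$, hence admissible in Definition \ref{def-two-scale}. Applying that definition to $\psi_R$ with $z=\varphi$ gives, for each fixed $R$,
\[
\lim_{n\to\infty}\int_\Omega\varphi(x)\phi\!\left(\frac{x}{\varepsilon_n}\right)u_n^i(x)\chi_R(u_n(x))\,dx=\iint_{\Omega\times Q}\int_{\R^d}\varphi(x)\phi(y)\xi^i\chi_R(\xi)\,d\nu_{(x,y)}(\xi)\,dy\,dx.
\]
It remains to send $R\to\infty$. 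On the left, the difference with the untruncated integral is bounded by $\|\varphi\|_\infty\|\phi\|_\infty\int_{\{|u_n|\ge R\}}|u_n^i|\,dx$, which tends to $0$ \emph{uniformly in} $n$ by the equi-integrability established above; on the right, dominated convergence (justified by the finiteness of $\iint\int|\xi|\,d\nu$) yields convergence to $\iint_{\Omega\times Q}\int_{\R^d}\varphi\phi\,\xi^i\,d\nu_{(x,y)}\,dy\,dx$. A standard three-$\varepsilon$ argument then interchanges the limits in $n$ and $R$ and produces the desired identity. I expect the only genuinely delicate point to be this uniform-in-$n$ control of the truncation tail, for which $p>1$ is essential; the same conclusion can alternatively be obtained in one stroke by invoking property (v) of Theorem \ref{theoremYoung}, applied to the positive and negative parts of $(x,(y,\xi))\mapsto\varphi(x)\phi(y)\xi^i$, whose composition with $\{(\langle\cdot/\varepsilon_n\rangle,u_n)\}$ is equi-integrable.
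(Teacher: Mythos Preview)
Your proof is correct. The paper does not actually prove this proposition: it is merely recalled from \cite[Proposition 2.3]{Pedregal2006}, so there is no argument in the paper to compare against. Your truncation-plus-equi-integrability approach is the standard way to upgrade the test class from $\mathcal C_0$ to functions of linear growth when $p>1$, and the alternative you mention via Theorem~\ref{theoremYoung}(v) (splitting into positive and negative parts) works equally well and is arguably cleaner.

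One minor technical remark: as written, $\psi_R(y,\xi)=\phi(y)\xi^i\chi_R(\xi)$ is not literally in $\mathcal C_0(\R^N\times\R^d)$ because $\phi\in\mathcal C^\infty_{per}(Q)$ does not vanish as $|y|\to\infty$. This is harmless, since $\langle x/\varepsilon_n\rangle\in\overline Q$ and the measure $\nu_{(x,y)}\otimes dy$ is supported in $y\in\overline Q$; to be formally compliant with Definition~\ref{def-two-scale} you can multiply $\psi_R$ by a fixed cutoff $\eta\in\mathcal C_c(\R^N)$ with $\eta\equiv 1$ on $\overline Q$, which changes nothing in either side of the identity.
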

	
	Now, we also recall the definition for the underlying deformation and we show that it does not depend on the generating sequence but only on the two-scale Young measures.
	\begin{remark}
		\label{remarkud}
		Consider $\{\e_n\}$, $\{u_n\}$ and $\{\nu_{(x,y)}\}_{(x,y) \in \Omega \times Q}$ as in Definition \ref{def-two-scale}. Since $\{u_n\}$ is bounded in $L^p$, then there exists a subsequence $\{u_{n_k}\}$ such that $u_{n_k} \rightsquigarrow u_1$ (see for instance \cite[Theorem 0.1]{Allaire1992}). By \cite[Proposition 3.4]{Barchiesi2006}, $u_{n_k}\rightharpoonup u$ in $L^p$, with
		\begin{equation*}
			u(x)=\int_{Q} u_1(x,y)dy.
		\end{equation*}
		It is important to notice that $u$ is uniquely defined. Indeed, by Proposition \ref{Pedregal2006-2.3} it holds
		\begin{equation*}
			u_1(x,y) = \int_{\R^d} \lambda d\nu_{(x,y)}(\lambda).
		\end{equation*}
		So, if we consider another subsequence $\{u_{n_j}\}$ two-scale converging to some function $v_1$, then since $\nu$ is the same for both $\{\e_{n_k}\}$ and $\{\e_{n_j}\}$, it holds
		\begin{equation*}
			u_1(x,y)=\int_{\R^d} \lambda d\nu_{(x,y)}(\lambda)=v_1(x,y).
		\end{equation*}
		In particular, it follows that
		\begin{equation*}
			u(x)=\int_Q\int_{\R^d} \lambda d\nu_{(x,y)}(\lambda)dy,
		\end{equation*}
		and that $u$ does not depend on the chosen subsequence $\{\e_{n_k}\}$. The function $u$ is called the "underlying deformation" of $\{\nu_{(x,y)}\}_{(x,y) \in \Omega \times Q}$.
	\end{remark}
	
	
	
	The proof of the following result is omitted, being easier than the one of \cite[Lemma 3.4]{BBS2008}, since the generating sequences are only in $L^p$ and not in $W^{1,p}$.
	\begin{lemma}
		\label{BBS3.4}
		Let $D$ be a Lebesgue measurable subset of $\Omega$, 
		and consider $\mu$ and $\nu$ in $L^\infty_w(\Omega\times Q, \M (\R^d))$ such that $\{\mu_{(x,y)}\}_{(x,y)\in \Omega\times Q}$ and $\{\nu_{(x,y)}\}_{(x,y)\in \Omega\times Q}$ are two-scale Young measures with the same underlying deformation $u\in L^p(\Omega, \R^d)$. Let
		\begin{equation*}
			\sigma_{(x,y)}:=
			\begin{cases}
				&\mu_{(x,y)} \quad \text{if }(x,y)\in D\times Q, \\
				&\nu_{(x,y)} \quad \text{if }(x,y)\in (\Omega\setminus D)\times Q.
			\end{cases}
		\end{equation*}
		Then $\sigma\in L^\infty_w(\Omega\times Q, \M (\R^d))$ and $\{\sigma_{(x,y)}\}_{(x,y)\in \Omega\times Q}$ is a two-scale Young measure with underlying deformation $u$. 
	\end{lemma}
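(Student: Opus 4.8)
The plan is to build a generating sequence for $\sigma$ by gluing together, on $D$ and on $\Omega\setminus D$, generating sequences for $\mu$ and $\nu$ taken along one and the same infinitesimal parameter. First I would dispose of the structural requirements: since for a.e. $(x,y)$ the measure $\sigma_{(x,y)}$ coincides with either $\mu_{(x,y)}$ or $\nu_{(x,y)}$, both probability measures a.e., one has $\sigma_{(x,y)}\in\mathcal P(\R^d)$ and $\|\sigma_{(x,y)}\|_{\M(\R^d)}=1$ a.e.; weak-$*$ measurability follows from writing, for every $\phi\in\C_0(\R^d)$, the map $(x,y)\mapsto\langle\sigma_{(x,y)},\phi\rangle=\chi_D(x)\langle\mu_{(x,y)},\phi\rangle+\chi_{\Omega\setminus D}(x)\langle\nu_{(x,y)},\phi\rangle$, which is measurable as $\mu,\nu\in L^\infty_w(\Omega\times Q,\M(\R^d))$ and $D$ is measurable. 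Hence $\sigma\in L^\infty_w(\Omega\times Q,\M(\R^d))$.

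For the two-scale property, I would fix an arbitrary sequence $\{\e_n\}\to0$ and invoke Definition \ref{def-two-scale} for $\mu$ and for $\nu$ \emph{along this same sequence}, obtaining bounded sequences $\{u_n^\mu\},\{u_n^\nu\}\subset L^p(\Omega,\R^d)$ generating $\mu$ and $\nu$. I then set $u_n:=\chi_D\,u_n^\mu+\chi_{\Omega\setminus D}\,u_n^\nu$, which is bounded in $L^p$ since $\|u_n\|_{L^p}^p=\|u_n^\mu\|_{L^p(D)}^p+\|u_n^\nu\|_{L^p(\Omega\setminus D)}^p$. To check that $\{u_n\}$ generates $\sigma$, I take $z\in L^1(\Omega)$ and $\varphi\in\C_0(\R^{N\times d})$ and split the integral over $D$ and $\Omega\setminus D$; since $z\chi_D,\,z\chi_{\Omega\setminus D}\in L^1(\Omega)$ are admissible weights, applying the defining convergence for $\{u_n^\mu\}$ with weight $z\chi_D$ and for $\{u_n^\nu\}$ with weight $z\chi_{\Omega\setminus D}$ yields, in the limit $n\to\infty$,
\[
\int_D\!\int_Q\!\int_{\R^d} z\varphi\,d\mu_{(x,y)}dydx+\int_{\Omega\setminus D}\!\int_Q\!\int_{\R^d} z\varphi\,d\nu_{(x,y)}dydx=\int_\Omega\!\int_Q\!\int_{\R^d} z\varphi\,d\sigma_{(x,y)}dydx,
\]
the last equality being precisely the definition of $\sigma$. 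As $\{\e_n\}$ was arbitrary, this shows $\{\sigma_{(x,y)}\}$ is a two-scale Young measure.

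Finally, by Remark \ref{remarkud} the underlying deformation of $\sigma$ is $x\mapsto\int_Q\int_{\R^d}\xi\,d\sigma_{(x,y)}(\xi)dy$, which on $D$ equals $\int_Q\int_{\R^d}\xi\,d\mu_{(x,y)}(\xi)dy=u(x)$ and on $\Omega\setminus D$ equals $\int_Q\int_{\R^d}\xi\,d\nu_{(x,y)}(\xi)dy=u(x)$, using that $\mu$ and $\nu$ share the common underlying deformation $u$; hence $\sigma$ has underlying deformation $u$. I do not expect a serious obstacle: the only point demanding attention is that Definition \ref{def-two-scale} be invoked for $\mu$ and $\nu$ along the \emph{same} $\{\e_n\}$ (which it permits, being a statement valid for every such sequence), after which everything reduces to localizing the weak-$*$ limit by testing against $z\chi_D$ and $z\chi_{\Omega\setminus D}$.
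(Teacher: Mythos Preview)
Your proposal is correct and is exactly the argument the paper has in mind: the authors omit the proof, noting only that it is easier than \cite[Lemma~3.4]{BBS2008} because the generating sequences are merely in $L^p$ rather than $W^{1,p}$, so the gluing $u_n:=\chi_D u_n^\mu+\chi_{\Omega\setminus D}u_n^\nu$ stays in the right space without any matching of traces. Your observation that Definition~\ref{def-two-scale} furnishes generating sequences for $\mu$ and $\nu$ along the \emph{same} arbitrary $\{\e_n\}$ is precisely the point that makes the argument go through.
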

	
	Next we consider the homogeneous case.
	\begin{definition}[Homogeneous two-scale Young measure]
		Given a two-scale Young measure $\{\nu_{(x,y)}\}_{(x,y)\in \Omega\times Q} \in L_w^\infty(\Omega\times Q, \mathcal{M}(\R^d))$ we say that it is homogeneous if the map $(x,y)\mapsto \nu_{(x,y)}$ is independent of $x$. In this case it can be identified with an element of $L^\infty_w(Q, \mathcal{M}(\R^d))$ and we may write $\{\nu_y\}_{y\in Q}\equiv \{\nu_{(x,y)}\}_{(x,y)\in \Omega\times Q}$.
	\end{definition}
	
	This yields the definition of average for a two-scale Young measure and we can also prove that this is an homogeneous two-scale Young measure.
	\begin{definition}[Average]
		\label{average}
		Given a two-scale Young measure $\{\nu_{(x,y)}\}_{(x,y)\in \Omega\times Q}\in L_w^\infty(\Omega\times Q, \mathcal{M}(\R^d))$, its average is the family $\{\bar\nu_y\}_{y\in Q}$ defined by
		\begin{equation*}
			\langle \bar\nu_y,\phi\rangle:=\fint_{\Omega}\int_{\R^d} \phi(\xi)d\nu_{(x,y)}(\xi)dx, \quad \phi\in C_0(\R^d).
		\end{equation*}
		
	\end{definition}
	
	As observed in \cite{BBS2008}, if $\{\nu_{(x,y)}\}_{(x,y)\in \Omega \times Q}$ is a two-scale Young measure, then it can be seen
	that $\overline{\mu} := \overline{\nu}_y \otimes dy$ is the average of $\{\mu_x \}_{x\in \Omega}$ with $\mu_x := \nu_{(x,y)} \otimes dy$ and $\mu \in L^\infty_w(\Omega;\mathcal M(\mathbb R^N \times \mathbb R^d))$. Thus, $\overline \mu$ is a homogeneous Young measure by \cite[Theorem 7.1, pg. 117]{Pedregalbook}.
	Following the same strategy as \cite[Lemma 2.9]{BBS2008}, we prove that  $\{\overline{\nu}_y \}_{y\in Q}$ is actually a homogeneous two-scale Young measure.
	We consider the case of constant underlying deformation $F \in \mathbb R^d$.
	\begin{lemma}
		\label{BBS_2.9}
		Let $\nu \in L^{\infty}_w(Q \times Q, \M(\R^d))$ be such that $\{ \nu_{(x,y)}\}_{(x,y) \in Q \times Q}$ is a two-scale Young measure with underlying deformation $F$, with $F \in \R^d$. Then $\{ \bar{\nu}_y\}_{y \in Q}$ is a homogeneous two-scale Young measure with the same underlying deformation.
	\end{lemma}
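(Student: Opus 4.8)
The plan is to produce, for every infinitesimal sequence $\{\delta_m\}$, a bounded sequence $\{w_m\}\subset L^p(Q,\R^d)$ whose pairs $(\langle\,\cdot\,/\delta_m\rangle,w_m)$ generate $\bar\nu_y\otimes dy$ in the sense of Definition \ref{def-two-scale}; since this limit will not depend on the slow variable, $\{\bar\nu_y\}$ will be a homogeneous two-scale Young measure. Before that I would dispose of the routine verifications. That $\bar\nu\in L^\infty_w(Q,\M(\R^d))$ with $\bar\nu_y\in\mathcal P(\R^d)$ for a.e.\ $y$ follows from Definition \ref{average}: the weak-$*$ measurability of $y\mapsto\langle\bar\nu_y,\phi\rangle=\fint_Q\langle\nu_{(x,y)},\phi\rangle\,dx$ and the bound $\|\bar\nu_y\|\le1$ come from Fubini and from $\nu_{(x,y)}$ being a probability measure a.e. The underlying deformation is obtained from Proposition \ref{Pedregal2006-2.3} and Remark \ref{remarkud}: one has $\int_Q\int_{\R^d}\xi\,d\bar\nu_y(\xi)\,dy=\fint_Q\big(\int_Q\int_{\R^d}\xi\,d\nu_{(x,y)}(\xi)\,dy\big)dx=\fint_Q u(x)\,dx=F$, because the underlying deformation of $\nu$ is the constant $F$.

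For the construction, fix $\{\delta_m\}\to0$. I would choose a mesoscopic scale $h_m\to0$ which is an integer multiple of $\delta_m$ and satisfies $h_m/\delta_m\to+\infty$ (e.g.\ $h_m$ the multiple of $\delta_m$ closest to $\sqrt{\delta_m}$), and set $\varepsilon_m:=\delta_m/h_m\to0$. Since $\{\nu_{(x,y)}\}$ is a two-scale Young measure, Definition \ref{def-two-scale} applied to $\{\varepsilon_m\}$ furnishes a sequence $\{u_m\}$, bounded in $L^p(Q,\R^d)$, generating it at scale $\varepsilon_m$. I then tile $Q$ by the cubes $D_\rho:=c_\rho+(0,h_m)^N$ with corners $c_\rho\in h_m\Z^N\subset\delta_m\Z^N$ that are contained in $Q$; these cover $Q$ up to a boundary layer $E_m$ with $|E_m|=O(h_m)\to0$. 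On each $D_\rho$ I put a compressed copy of $u_m$, namely $w_m(x):=u_m\big((x-c_\rho)/h_m\big)$, and $w_m:=F$ on $E_m$. The decisive point, and the step I expect to be the crux, is the preservation of the fast variable under this slow-scale rearrangement: writing $x=c_\rho+h_m\sigma$ with $\sigma\in Q$, one has $x/\delta_m=c_\rho/\delta_m+\sigma/\varepsilon_m$, and since $c_\rho/\delta_m\in\Z^N$ this forces $\langle x/\delta_m\rangle=\langle\sigma/\varepsilon_m\rangle$ \emph{exactly}. It is precisely the snapping of the corners onto the lattice $\delta_m\Z^N$, together with the scaling relation $\varepsilon_m=\delta_m/h_m$, that makes the shift vanish identically, which is what lets me avoid any equidistribution or periodicity assumption on the test functions (an averaged shift would wrongly collapse the $y$-dependence of $\bar\nu_y$).

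It then remains to pass to the limit. For $z\in L^1(Q)$ and $\varphi\in\C_0(\R^N\times\R^d)$, changing variables on each $D_\rho$ and using the identity above gives
\[
\int_Q z(x)\varphi\Big(\Big\langle\tfrac{x}{\delta_m}\Big\rangle,w_m(x)\Big)dx=\Big(\sum_\rho h_m^N z(c_\rho)+o(1)\Big)\int_Q\varphi\Big(\Big\langle\tfrac{\sigma}{\varepsilon_m}\Big\rangle,u_m(\sigma)\Big)d\sigma+o(1),
\]
where the error terms come from replacing $z(c_\rho+h_m\sigma)$ by $z(c_\rho)$ (an $L^1$–continuity and Riemann-sum estimate using $\|\varphi\|_\infty$ and the density of $\C_c$) and from the contribution of $E_m$. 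As $m\to\infty$ the Riemann sum tends to $\int_Q z\,dx$, while the last integral is exactly the defining convergence of $\{u_m\}$ tested with $z\equiv1$, hence tends to $\int_Q\int_Q\langle\nu_{(s,y)},\varphi(y,\cdot)\rangle\,dy\,ds=\langle\bar\nu_y\otimes dy,\varphi\rangle$. Therefore the left-hand side converges to $\big(\int_Q z\,dx\big)\langle\bar\nu_y\otimes dy,\varphi\rangle$, which is precisely the homogeneous two-scale generation required. Finally, $\|w_m\|_{L^p(Q)}^p=\big(\sum_\rho h_m^N\big)\|u_m\|_{L^p(Q)}^p+|E_m||F|^p$ stays bounded, and the underlying deformation of $\{w_m\}$ is the constant $F$ computed above, completing the argument.
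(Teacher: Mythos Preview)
Your argument is correct and is essentially the paper's proof: both choose a mesoscopic tiling scale that is an integer multiple of the given fast scale (the paper takes $\rho_n:=\varepsilon_n[1/\sqrt{\varepsilon_n}]$, you take $h_m$), place rescaled copies of a generating sequence for $\nu$ on the tiles, and exploit the identity $\langle x/\delta_m\rangle=\langle\sigma/\varepsilon_m\rangle$ coming from $c_\rho/\delta_m\in\Z^N$ to decouple the $z$-Riemann sum from the inner integral. Your exposition is in fact a bit more explicit than the paper's about why the corners must lie on $\delta_m\Z^N$ and about the density step needed to pass from $z\in\C_c(Q)$ to $z\in L^1(Q)$.
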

	
	\begin{proof}
		First we note that $\bar\nu\equiv\{\overline \nu_y\}_{y\in Q}\in L^\infty_w(Q, \mathcal{M}(\R^d))$, indeed by Fubini's Theorem and by Definition \ref{average} we have that $y\mapsto\bar\nu_y$ is weakly* measurable. Our aim is to show that for every sequence $\{\e_n\}$ there exists $\{v_n\}\in L^p(Q, \R^d)$ such that the sequence $\left\{\left(\langle\frac{\cdot}{\e_n}\rangle,v_n\right)\right\}$ generates the measure $\bar\mu = \bar\nu_y\otimes dy$ and $v_n\rightharpoonup F$ in $L^p$. Let $\{u_n\}\subset L^p(Q, \R^d)$ be the sequence generating the two-scale Young measure $\nu$ and assume that $u_n\rightharpoonup F$ in $L^p$.  Let $\e_n\to 0$ and define $\rho_n:=\e_n\left[\frac{1}{\sqrt{\e_n}}\right]$. By construction there exists $m_n\in \N$, $a^n_i\in \rho_n\mathbb{Z}\cap Q$, and a measurable set $E_n\subset Q$ such that $\mathcal L^N(E_n)\to 0$ and 
		\begin{equation*}
			Q=\bigcup_{i=1}^{m_n}(a_i^n+\rho_nQ)\cup E_n.
		\end{equation*}
		Let
		\begin{equation*}
			v_n(x):= \begin{cases} u_{\rho_n/\e_n}\left(\frac{x-a^n_i}{\rho_n}\right),\quad \text{ if } x \in a^n_i + \rho_nQ \text{ and } i \in \{1, \dots, m_n\}\\ F \quad \text{ otherwise } \end{cases}
		\end{equation*}
		
		By construction, $v\in L^p(Q,\R^d)$ and $v_n\rightharpoonup F$. Let $z\in \mathcal C_c(Q)$ and $\phi\in \mathcal C_0(\R^N\times \R^d)$, then
		\begin{align*}
			\int_Q z(x)\phi\left(\left\langle \frac{x}{\varepsilon_n} \right\rangle, v_n(x)\right)dx&= \sum_{i=1}^{m_n}\int_{a_i^n+\rho_nQ}z(x)\phi\left(\left\langle \frac{x}{\varepsilon_n} \right\rangle, u_{\rho_n/\varepsilon_n}\left(\frac{x-a_i^n}{\rho_n}\right)\right)dx\\
			& + \int_{E_n} z(x)\phi\left(\left\langle \frac{x}{\varepsilon_n} \right\rangle, F\right)\\
			&=\sum_{i=1}^{m_n}z(a_i^n)\int_{a_i^n+\rho_nQ}\phi\left(\left\langle \frac{x}{\varepsilon_n} \right\rangle, u_{\rho_n/\varepsilon_n}\left(\frac{x-a_i^n}{\rho_n}\right)\right)dx\\
			&+\sum_{i=1}^{m_n}\int_{a_i^n+\rho_nQ}(z(x)-z(a_i^n))\phi\left(\left\langle \frac{x}{\varepsilon_n} \right\rangle, u_{\rho_n/\varepsilon_n}\left(\frac{x-a_i^n}{\rho_n}\right)\right)dx\\
			&  + \int_{E_n} z(x)\phi\left(\left\langle \frac{x}{\varepsilon_n} \right\rangle, F\right).
		\end{align*}
		By the uniform continuity of $z$ and the fact that $\mathcal{L}^N(E_n)\to 0$ it follows that
		\begin{align*}
			\int_Q z(x)\phi\left(\left\langle \frac{x}{\varepsilon_n} \right\rangle, v_n(x)\right)dx=\sum_{i=1}^{m_n}\rho_n^Nz(a_i^n)\int_{Q}\phi\left(\left\langle \frac{\rho_nx+a_i^n}{\varepsilon_n} \right\rangle, u_{\rho_n/\varepsilon_n}\left(x\right)\right)dx + o(1),
		\end{align*}
		i.e. that
		\begin{align*}
			\int_Q z(x)\phi\left(\left\langle \frac{x}{\varepsilon_n} \right\rangle, v_n(x)\right)dx=\sum_{i=1}^{m_n}\rho_n^Nz(a_i^n)\int_{Q}\phi\left(\left\langle \frac{x}{\varepsilon_n/\rho_n} \right\rangle, u_{\rho_n/\varepsilon_n}\left(x\right)\right)dx + o(1),
		\end{align*}
		and so passing to the limit for $n\to \infty$ and by Definition \ref{average} we get
		\begin{align*}
			\lim_{n\to \infty} \int_Q z(x)\phi\left(\left\langle \frac{x}{\varepsilon_n} \right\rangle, v_n(x)\right)dx&= \int_Qz(x)dx\int_Q\int_Q\int_{\R^d}\phi(y,\xi)d\nu_{(x,y)}(\xi)dydx\\
			&=\int_Qz(x)dx\int_Q\int_{\R^d}\phi(y,\xi)d\bar{\nu}_y(\xi)dy\\
			&=\langle\bar{\nu},\phi\rangle\int_Q z(x)dx.
		\end{align*}
		By a density argument it is easy to see that the previous identity holds for every $z\in L^1(Q)$ and so that $\{\bar{\nu}_y\}_{y\in Q}$ is the homogeneous two scale Young measure generated by $\{v_n\}$. 
		
	\end{proof}
	
	Now we introduce our class of functions, according to the notion introduced in \cite{BB}, and later adopted by \cite{BBS2008, Barchiesi2006,V2}.
	
	\begin{definition}[Admissible integrand]
		\label{admisint}
		A function $f: \Omega\times \Omega \times Q\times Q\times \R^d \times \R^d \rightarrow [0,+\infty]$ is said to be an admissible integrand if, for any $\eta > 0$, there exist compact sets $X^\eta\subset \Omega\times \Omega$, $Y^{\eta} \subset Q\times Q$, with $\mathcal{L}^{2N}((\Omega\times \Omega) \setminus X^{\eta}) < \eta$, $\mathcal{L}^{2N}((Q\times Q) \setminus Y^\eta) < \eta$ and such that $f_{|_{X^\eta \times Y^{\eta} \times \R^d \times \R^d} }$ is continuous. 
	\end{definition}
	
	\begin{remark}\label{remBarchiesi}
		\begin{itemize}
			\item[i)] We observe that by \cite[Lemma 4.11]{Barchiesi2006}, if $f$ is an admissible integrand then for any fixed $\varepsilon>0$ the functiona $(x,x',\xi,\xi')\mapsto f(x,x',x/\varepsilon,x'/\varepsilon,\xi,\xi')$ is $\mathcal{L}^{2N}(\Omega\times \Omega)\otimes\mathcal{B}(\R^{2d})$-measurable, where $\mathcal{B}(\R^{2d})$ is the $\sigma$-algebra of Borel subsets of $\R^{2d}$. In particular, the functional $I_\varepsilon$ (defined in \eqref{functI}) is well defined in $L^p(\Omega,\R^d)$.
			\item[ii)] Moreover, we recall that by the Scorza-Dragoni Theorem it is possible to prove that a Carathéodory function is an admissible integrand. 
		\end{itemize}
	\end{remark}
	
	The next result, stated for generic spaces and dimensions, is a two-scale analog of the [Fundamental Theorem of Young Measures [(iv), (v)], and it was proved in \cite[Theorem 2.8]{Barchiesi2007}. 
	We recall it here since it is a crucial tool to prove Theorem \ref{hom}.
	
	\begin{theorem}
		\label{theoremBarchiesi}
		Let $k,m\in\N$, $R\subset \R^k$ be open and bounded, and $S$ be the unit cube in $\R^k$. Let $\{\varepsilon_n\}$ be a vanishing positive sequence. Let $\{u_n\}\subset L^1(R, \R^m)$ be a bounded sequence. Then, there exist non relabeled subsequences and a two-scale Young measure $\{\nu_{(x,y)}\}_{(x,y)\in R \times S}$ such that it holds
		
		\noindent
		\begin{enumerate}
			\item[(i)] if $\mathcal W:R \times  S \times  \R^m \to [0,+\infty]$ is an admissible integrand then
			\begin{align}
				\label{theoremb1}
				\liminf_{n\to +\infty} \int_{R} \mathcal W\left(x, \left \langle 
				\frac{x}{\varepsilon_n}\right\rangle, u_n(x) \right)dx
				\ge \iint_{R\times S }\overline{\mathcal W}(x,y)dxdy, 
			\end{align}
			where
			\begin{equation*}
				\overline{\mathcal W}(x,y):=\int_{\R^M} \mathcal W(x, y, \lambda)d\nu_{(x,y)}(\lambda);
			\end{equation*}
			\item[(ii)] if $\mathcal W:R \times S  \times \R^m \to \R$ is an admissible integrand and\\ $\left\{\mathcal W\left( \cdot, \left\langle\frac{\cdot}{\varepsilon_n} \right \rangle,u_n(\cdot)\right)\right\}$   is equi-integrable for any $x \in R$, then $\mathcal W(x, y, \cdot)$ is $\nu_{(x,y)}$-integrable for a.e. $(x,y)\in R\times S$, the function $\overline{\mathcal W}$ is in $L^1(R\times S)$ and 
			\begin{align}
				\label{theoremb2}
				\lim_{n\to +\infty} \int_{R} \mathcal W\left(x, \left \langle 
				\frac{x}{\varepsilon_n}\right\rangle, u_n(x) \right)dx
				= \iint_{R\times S}\overline{\mathcal W}(x,y)dxdy.
			\end{align}
		\end{enumerate}
	\end{theorem}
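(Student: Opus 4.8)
The plan is to deduce the two-scale statement from the classical Fundamental Theorem on Young measures (Theorem \ref{theoremYoung}) applied to the \emph{joint} sequence
\[
w_n := \left(\langle \cdot/\varepsilon_n\rangle,\, u_n\right)\colon R \longrightarrow \R^{k}\times\R^m .
\]
Since $\langle x/\varepsilon_n\rangle\in[0,1)^k$ pointwise, $\{w_n\}$ inherits the $L^1(R,\R^{k+m})$-boundedness of $\{u_n\}$, so by Theorem \ref{theoremYoung} there are a (non-relabeled) subsequence and $\mu\in L^\infty_w(R,\M(\R^{k+m}))$ with $\delta_{w_n}\overset{*}{\rightharpoonup}\mu$ and $\mu_x(\R^{k+m})\le 1$. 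First I would upgrade the $\mu_x$ to probability measures via Theorem \ref{theoremYoung}(iii): taking $g(y,\lambda):=\log\!\left(1+|(y,\lambda)|\right)$, the concavity of $t\mapsto\log(1+t)$ and Jensen's inequality give $\fint_R g(w_n)\le \log\!\left(1+\fint_R|w_n|\right)\le\log(1+\sqrt{k}+C/\mathcal L^N(R))$, hence $\sup_n\int_R g(w_n)<\infty$ and $\|\mu_x\|=1$ for a.e.\ $x$. Moreover $\mathrm{dist}(w_n,\overline S\times\R^m)=0$, so Theorem \ref{theoremYoung}(ii) yields $\mathrm{Supp}(\mu_x)\subset\overline S\times\R^m$ for a.e.\ $x$.

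The heart of the argument is the factorization $\mu_x=\nu_{(x,y)}\otimes dy$, which will provide the two-scale Young measure. I would first identify the $y$-marginal of $\mu_x$ with the Lebesgue measure $dy$ on $S$. Because each $\mu_x$ has full mass, the generation property extends to bounded continuous test functions; testing against $z(x)\,a(y)$ with $z\in L^1(R)$ and $a\in\mathcal C_b(\overline S)$, and extending $a$ to a $Q$-periodic function, the Riemann–Lebesgue (equidistribution) property gives
\[
\int_R z(x)\,a\!\left(\langle x/\varepsilon_n\rangle\right)dx \longrightarrow \int_R z\,dx\;\fint_S a(y)\,dy,
\]
which must coincide with $\int_R z(x)\int a(y)\,d\mu_x(y,\lambda)\,dx$. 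Since $z,a$ are arbitrary, the push-forward of $\mu_x$ under $(y,\lambda)\mapsto y$ equals $dy$ on $S$ for a.e.\ $x$. Disintegrating $\mu_x$ with respect to this marginal produces a weak*-measurable family $(x,y)\mapsto\nu_{(x,y)}\in\mathcal P(\R^m)$ with $\mu_x=\nu_{(x,y)}\otimes dy$; testing the generation of $\mu$ against $z(x)\varphi(y,\lambda)$, $\varphi\in\mathcal C_0(\R^k\times\R^m)$, then shows precisely that $\{(\langle\cdot/\varepsilon_n\rangle,u_n)\}$ generates $\{\nu_{(x,y)}\otimes dy\}$, i.e.\ $\{\nu_{(x,y)}\}$ is a two-scale Young measure in the sense of Definition \ref{def-two-scale}.

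For (i), view the admissible integrand as $\widetilde{\mathcal W}(x,(y,\lambda)):=\mathcal W(x,y,\lambda)$; Definition \ref{admisint} guarantees that $\widetilde{\mathcal W}$ is a nonnegative normal integrand on $R\times(\overline S\times\R^m)$ (continuity in $(y,\lambda)$ off sets of arbitrarily small measure in $(x,y)$). Hence Theorem \ref{theoremYoung}(iv) applies and gives
\[
\liminf_{n}\int_R \mathcal W\!\left(x,\langle x/\varepsilon_n\rangle,u_n(x)\right)dx \ \ge\ \int_R\int_{\overline S\times\R^m}\widetilde{\mathcal W}(x,(y,\lambda))\,d\mu_x(y,\lambda)\,dx,
\]
and inserting $\mu_x=\nu_{(x,y)}\otimes dy$ together with Tonelli's theorem rewrites the right-hand side as $\iint_{R\times S}\overline{\mathcal W}(x,y)\,dx\,dy$. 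For (ii), the extra equi-integrability of $\{\mathcal W(\cdot,\langle\cdot/\varepsilon_n\rangle,u_n)\}$ lets one split $\mathcal W=\mathcal W^+-\mathcal W^-$ and apply the argument above to each nonnegative part: equi-integrability rules out loss of mass and upgrades the two liminf inequalities to the asserted equality, while simultaneously yielding $\overline{\mathcal W}\in L^1(R\times S)$ and the $\nu_{(x,y)}$-integrability of $\mathcal W(x,y,\cdot)$, exactly as in Theorem \ref{theoremYoung}(v).

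The step I expect to be most delicate is the factorization $\mu_x=\nu_{(x,y)}\otimes dy$: one must not only compute the $y$-marginal via equidistribution (which forces the full-mass property established above, in order to pass from $\mathcal C_0$ to bounded continuous test functions), but also carry out the measurable disintegration so that $(x,y)\mapsto\nu_{(x,y)}$ genuinely belongs to $L^\infty_w(R\times S,\M(\R^m))$ and satisfies Definition \ref{def-two-scale}. A secondary point requiring care is checking that an admissible integrand in the sense of Definition \ref{admisint} qualifies as a normal integrand in the joint variable $(y,\lambda)$, and the passage from nonnegative to real-valued integrands in part (ii).
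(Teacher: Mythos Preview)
The paper does not give its own proof of this theorem: it is quoted from \cite[Theorem~2.8]{Barchiesi2007} (with a small extension noted in Remark~\ref{remark2}). Your strategy---apply the classical Fundamental Theorem (Theorem~\ref{theoremYoung}) to the joint sequence $w_n=(\langle\cdot/\varepsilon_n\rangle,u_n)$, identify the $y$-marginal of the resulting $\mu_x$ as Lebesgue via equidistribution, disintegrate to get $\mu_x=\nu_{(x,y)}\otimes dy$, and then read off (i) and (ii) from Theorem~\ref{theoremYoung}(iv)--(v)---is exactly Barchiesi's approach, so in spirit you are reproducing the cited proof.

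There is, however, one point where your sketch is too quick, and it is precisely the one you label ``secondary''. You write that Definition~\ref{admisint} ``guarantees that $\widetilde{\mathcal W}$ is a nonnegative normal integrand on $R\times(\overline S\times\R^m)$''. This is not true as stated. Admissibility gives continuity only on $X^\eta\times Y^\eta\times\R^m$: for a fixed $x\in\bigcap_\eta X^\eta$ (full measure in $R$), the slice $(y,\lambda)\mapsto\mathcal W(x,y,\lambda)$ is continuous on $\bigl(\bigcup_\eta Y^\eta\bigr)\times\R^m$, which has full \emph{Lebesgue} measure in $S$ but need not be all of $S$, so lower semicontinuity in the joint variable $(y,\lambda)$ is not guaranteed and Theorem~\ref{theoremYoung}(iv)--(v) cannot be invoked directly. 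The rescue is exactly the factorisation you already obtained: since the $y$-marginal of $\mu_x$ is $dy$, the bad set $S\setminus Y^\eta$ carries arbitrarily small $\mu_x$-mass, and one argues by first restricting to $Y^\eta$ (where the integrand \emph{is} Carath\'eodory in $(y,\lambda)$), applying Theorem~\ref{theoremYoung}(iv)--(v) there, and then sending $\eta\to 0$. This truncation-in-$y$ step is the genuine content that distinguishes the two-scale statement from a trivial corollary of the classical one, so it should be promoted from ``secondary'' to the main technical point alongside the disintegration. The same caveat applies to your treatment of (ii): splitting into $\mathcal W^{\pm}$ and invoking Theorem~\ref{theoremYoung}(v) again presupposes a Carath\'eodory integrand in $(y,\lambda)$, and needs the same $Y^\eta$-truncation before one can conclude.
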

	
	\begin{remark}
		\label{remark2}
		We would like to point out that the statement of Theorem $\ref{theoremBarchiesi}$ is slightly different compared to \cite[Theorem 2.8]{Barchiesi2007}. Indeed, in \cite[Theorem 2.8-i)]{Barchiesi2007} the integrand function $\mathcal W$ is assumed to be positive and finite. As far as we understand the proof still works for $\mathcal W$ having values in $[0,+\infty]$ as long as the set where $\mathcal{W}=\infty$ has null measure.
	\end{remark}

	\begin{remark}
		\label{narrow}
		\begin{itemize}
			\item[i)]   We remark that it is possible to look at Young measures from a different perspective than Definition \ref{YM}. In fact, if $\Omega \subset \R^N$ is an open bounded subset and $p>1$, we can say that a Young measure in $\Omega \times \R^m$ is a positive measure $\mu$ in $\Omega \times \mathbb R^m$, such that its push-forward measure $\pi_\Omega \# \mu$, obtained through the projection $\pi_\Omega$ is the $\mathcal L^N$ measure on $\Omega$, i.e.
			for all Borel subsets $B$ of $\Omega$
			\[
			\pi_\Omega\#\mu(B):= \mu(B\times \mathbb R^m)=\mathcal L^N(B).\]
			This set of measures is denoted by $\mathcal Y(\Omega;\mathbb R^m)$. By \cite[Theorem 4.2.4]{ABM}, $\mathcal Y(\Omega;\mathbb R^m)\subset L^\infty_w(\Omega;\mathcal M(\mathbb R^m))$, with the identification $\mu=\{\mu_x\}_{x\in \Omega}\otimes \mathcal L^N$.
			
			\item[ii)]   On the other hand, \cite[Theorem 4.3.1]{ABM} ensures that in $ \mathcal Y(\Omega;\mathbb R^m)$, the weak* convergence in $L^\infty_w(\Omega;\mathcal M(\mathbb R^m))$  of $\{\mu_n\}\subset\mathcal Y(\Omega;\mathbb R^m)$ towards $\mu\in \mathcal Y(\Omega;\mathbb R^m)$ is equivalent to the narrow
			convergence, where the latter is defined as follows
			\[
			\mu_n \overset{nar}{\rightharpoonup} \mu \Longleftrightarrow \lim_n \iint_{\Omega\times \mathbb R^m}\psi(x,\lambda) d \mu_n(x,\lambda)=\iint_{\Omega \times \mathbb R^m} \psi(x,\lambda)\mu(x,\lambda),
			\]
			for every $\psi \in C_b(\Omega;\mathbb R^m)$.
			For more details on the narrow topology we refer to \cite[Section 4.3]{ABM}.

			Equivalently, by the identification $\mu_n=\{(\mu_n)_x\}_{x\in\Omega}$ and $\mu=\{\mu_x\}_{x\in\Omega}$, we say that $\{\mu_n\}$ narrowly converges to $\mu$ if and only if for every $g\in L^1(\Omega)$ and $h\in \mathcal{C}_0(\R^m)$ it holds
			\begin{equation*}
				\lim_{n\to \infty}\int_\Omega g(x)\int_{\R^m} h(y)d(\mu_n)_x(y)dx=\int_\Omega g(x)\int_{\R^m} h(y)d\mu_x(y)dx. 
			\end{equation*}
			
			\item[iii)] Given $p \geq 1$, $\mathcal Y^p(\Omega;\mathbb R^m)$ is the subset of $\mathcal Y(\Omega;\mathbb R^m)$ such that
			\begin{equation}\label{pmom}
				\iint_{\Omega \times \mathbb R^m}
				|y|^p
				d \mu(x, y) <+\infty.
			\end{equation}
			As a consequence of H\"older’s inequality, $\mathcal Y^p (\Omega;\mathbb R^m)\subset \mathcal Y^q(\Omega;\mathbb R^m)$ if $1 \leq q \leq p$.
			
			We recall also that ${\mathcal Y}^p(\Omega;\mathbb R^m)$ is not closed in $\mathcal Y(\Omega;\mathbb R^m)$ 
			under the narrow topology. Nevertheless, given a bounded sequence $\{v_n\} \subset L^p(\Omega;\mathbb R^m)$ there exists $\mu  \in  \mathcal Y(\Omega, \mathbb R^m)$ such that $\{v_n\}$ (up to a subsequence) generates $\mu$; and $\mu \in \mathcal Y^p(\Omega;\mathbb R^m)$.
			Conversely, for any $\mu \in \mathcal Y^p(\Omega;\mathbb R^m)$,
			there exists a bounded sequence $\{v_n\}\subset  L^p(\Omega;\mathbb R^m)$, generating $\mu$ and such that $\{|v_n|^p\}$ is equi-integrable. We refer to \cite{ABM} and \cite{BMC18} for details.
			
			Moreover, considering a bounded sequence $\{v_n\}\subset L^p(\Omega,\R^m)$ and $\nu\in \mathcal{Y}^p(\Omega,\R^m)$, up to identifying each $v_n$ with the Dirac mass $\delta_{v_n}\in \mathcal{Y}^p(\Omega,\R^m)$, by \cite[Theorem 4.3.1]{ABM} we have that $\{v_n\}$ generates $\nu$ (in the sense of Definition \ref{YM}) if and only if $\delta_{v_n}$ narrowly converges to $\nu$. 
			\item[iv)] Following \cite{V2} and \cite{Barchiesi2006}, we can extend the definition of Young measure of finite $p$ moment to parametrized measures defined in $\Omega$ with values in $S$, with $S=\mathbb T^N\times \mathbb R^d$, equipped with the Borel $\sigma$-algebra, i.e. $\mathcal Y^p(\Omega;S)$.
			In this setting the test functions for the narrow convergence can be given by $C_c(\Omega;\mathbb R^d)\otimes C_c(S;\mathbb R^d)$, see \cite[page 149]{V2}. 
			
			These latter observations motivate the comparison of the homogenized functional $I_{hom}$ of Theorem \eqref{hom} with the functional $\overline I_{hom}$ in Corollary \ref{cornar} computed in terms of suitable Young measures in $\mathcal Y^p(\Omega;\mathbb T^N\times \mathbb R^d)$. 
			
			In fact, we also recall that the relaxation of non-local functionals as in \eqref{functI}, but not dependent on $\varepsilon$, has been obtained in \cite{BMC18} in term of narrow convergence in $\mathcal Y^p(\Omega;\mathbb R^d)$. The analogous stand point for the homogenization will be given in Corollary \ref{cornar}, with the use of the above mentioned subset of $\mathcal Y^p(\Omega;\mathbb T^N\times \mathbb R^d)$. 
		\end{itemize}
		
	\end{remark}

	\subsection{$\Gamma$-convergence}
	We recall the definition and main properties of $\Gamma$-convergence. For a deeper overview of this topic, we refer to \cite{DMbook}. 
	The following is an equivalent definition in the metric setting, which is sufficient for our purposes. 
	\begin{definition}[\cite{DMbook}, Proposition $8.1$]
		Let $(X,d)$ be a metric space and $F_k: X \rightarrow \mathbb R \cup \{+\infty\}, \forall k \in \N$ be a sequence of functionals. Then $\{ F_k\}$ $\Gamma$-converges to $F: X \rightarrow \mathbb R \cup \{+\infty\}$ if and only if 
		\begin{itemize}
			\item[$(i)$] for every $ x \in X$ and every sequence $\{x_k\}$ such that $ d(x_k,x) \to 0$ as $k\to +\infty$, it is
			\[ F(x) \le \liminf_{k \rightarrow \infty} F_k(x_k); \]
			\item[$(ii)$] for every $x \in X$, there exists a sequence $\{x_k\}$ such that $d(x_k,x)\to 0$ as $k\to +\infty$, such that
			\[ F(x) = \lim_{k \rightarrow \infty} F_k(x_k).\]
		\end{itemize}
	\end{definition}
	
	In fact, we write
	\[
	F(x):\inf\{\liminf F_k(x_k): d(x_k,x)\to 0, \hbox{ as }k \to +\infty\}.
	\]
	A fundamental property we want to underline is that the $\Gamma$-limit is lower semi-continuous with respect to the convergence induced by $d$, see \cite[Proposition 6.8]{DMbook}.
	
	We also provide the definition for $\Gamma$-convergence for a family of functionals.
	
	\begin{definition}\label{defGammafamily}
		Let $(X,d)$ be a metric space. For a positive parameter $\varepsilon$, we say that a family $\{ F_{\varepsilon}\}_{\varepsilon}$ of functionals, with $F_\varepsilon: X \rightarrow \mathbb R \cup \{+\infty\}, \Gamma$-converges to $F: X \rightarrow \mathbb R \cup \{+\infty\}$, with respect to the metric $d$ as $\varepsilon \rightarrow 0^+$, if for all vanishing sequences $\{\varepsilon_k\}$,  $\{F_{\varepsilon_k}\} $ $\Gamma$-converges to $F$, when $k \rightarrow \infty$.
	\end{definition}
	We will write, as for the case of sequences
	\[
	F(x):\inf\{\liminf F_\varepsilon(x_\varepsilon): d(x_\varepsilon,x)\to 0, \hbox{ as }\varepsilon \to 0\}.
	\]
	Finally, we state a theorem, whose proof is omitted being very similar to the one of \cite[Theorem 1.1]{CRZ2011} (for related results see also \cite{BFL}).\\
	\begin{theorem}\label{asthm1.1CRZ}
		Let  $\Omega$ be a bounded open set of $\mathbb R^N$, let $f:\Omega\times \mathbb R^d \to \mathbb R$  be a Carath\'eodory integrand
		satisfying
		\begin{itemize}
			\item[(H1)] $f(\cdot ,b)$ is $Q$-periodic, for all $b \in \mathbb R^d$;
			\item[(H2)] there exist $p> 1$ and a positive constant $C$ such that
			\[\frac{1}{C} |b|^p - C \leq f(x,b) \leq C(1+|b|^p),\]
			for a.e. $x \in \Omega$ and for every $b \in \R^d$.
		\end{itemize}
		For every $\varepsilon >0$, consider the family of functionals $F_\varepsilon : L^p( \mathbb R^d)\to (-\infty,+\infty]$,  defined as
		\[ F_\varepsilon (u) :=\int_\Omega f\left(\frac{x}{\varepsilon}, u(x)\right)dx.\]
		Let $\mathcal F$ be the $\Gamma$-limit of $\{F_\varepsilon\}_\varepsilon$ with respect to the weak convergence in $L^p(\Omega;\mathbb R^d)$, i.e. 
		$\mathcal F(u):=\inf\{\liminf F_\varepsilon(u_\varepsilon): u_\varepsilon \rightharpoonup u \hbox{ in }L^p(\Omega;\mathbb R^d)\}$, then
		\[\mathcal F(u)=\int_\Omega f_{hom}(u(x))dx,\]
		where $f_{hom}$ is the function in \eqref{f_hom}.
	\end{theorem}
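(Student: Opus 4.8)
The plan is to prove the two $\Gamma$-convergence inequalities separately, after first recording the structural properties of $f_{hom}$ that make the target functional well behaved. Writing $\beta_T(\xi):=\inf\{\int_{(0,T)^N}f(\langle x\rangle,\xi+v)\,dx:\ \fint_{(0,T)^N}v=0\}$ and $g_T:=\beta_T/T^N$, the hypothesis (H1) lets one tile $(0,kT)^N$ by $k^N$ translated copies of $(0,T)^N$ and repeat an optimal mean-zero competitor periodically (a mean-zero competitor stays mean-zero under such periodic extension), which gives $\beta_{kT}\le k^N\beta_T$, i.e. $g_{kT}\le g_T$. Hence $g_{kT}$ is nonincreasing along integer multiples, and, using (H2) to control non-integer side-lengths, the limit in \eqref{f_hom} exists and satisfies $f_{hom}=\inf_T g_T$; in particular $f_{hom}\le g_T$ for every $T$. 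A one-directional layering of near-optimal competitors for $\xi_1,\xi_2$ on complementary slabs of $(0,T)^N$ yields $f_{hom}(\theta\xi_1+(1-\theta)\xi_2)\le\theta f_{hom}(\xi_1)+(1-\theta)f_{hom}(\xi_2)$, so $f_{hom}$ is convex; together with (H2) this gives the two-sided $p$-growth of $f_{hom}$ and hence its continuity. Consequently $u\mapsto\int_\Omega f_{hom}(u)\,dx$ is real-valued, strongly continuous on $L^p(\Omega;\R^d)$, and, being convex with $p$-growth, sequentially weakly lower semicontinuous.

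For the $\Gamma$-liminf inequality I fix a vanishing sequence $\{\e_k\}$ and $u_k\rightharpoonup u$ in $L^p$ with $\liminf_k F_{\e_k}(u_k)<+\infty$, which by the coercivity in (H2) forces $\{u_k\}$ to be bounded in $L^p$. I would then localize: partition $\Omega$, up to a remainder of small measure, into cubes $C$ of side $\delta$ on which $u$ is close to its average $\xi_C:=\fint_C u$, and on each $C$ rescale by $x=\e_k y$, so that $C/\e_k$ is, up to a boundary layer, a cube of side $T_k:=\delta/\e_k\to+\infty$. With $\tilde u_k(y):=u_k(\e_k y)$ one has $\int_C f(x/\e_k,u_k)\,dx=\e_k^N\int_{C/\e_k}f(\langle y\rangle,\tilde u_k(y))\,dy$, and, since the average of $\tilde u_k$ over $C/\e_k$ equals $\fint_C u_k\to\xi_C$, the difference $\tilde u_k-\fint_C u_k$ is an admissible mean-zero perturbation in the definition of $\beta_{T_k}(\fint_C u_k)$. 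Hence $\int_C f(x/\e_k,u_k)\,dx\ge \e_k^N T_k^N g_{T_k}(\textstyle\fint_C u_k)=\delta^N g_{T_k}(\fint_C u_k)\ge\delta^N f_{hom}(\fint_C u_k)$, where the last step uses $f_{hom}\le g_{T_k}$. Summing over the cubes, passing to the limit $k\to\infty$ (so that $\fint_C u_k\to\fint_C u$ and $f_{hom}$ is continuous), and finally letting $\delta\to0$ yields $\liminf_k F_{\e_k}(u_k)\ge\int_\Omega f_{hom}(u)\,dx$.

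For the $\Gamma$-limsup (recovery) inequality I proceed by increasing generality. For a constant target $u\equiv\xi$ I pick, for each integer $T$, a near-optimal mean-zero $v_T$ in the cell problem on $(0,T)^N$, extend it $(0,T)^N$-periodically, and set $u_\e(x):=\xi+v_T(x/\e)$; since $v_T$ has zero mean, $u_\e\rightharpoonup\xi$, while $g(y):=f(\langle y\rangle,\xi+v_T(y))$ is $(0,T)^N$-periodic, so the averaging of periodic functions gives $\int_\Omega f(x/\e,u_\e)\,dx\to|\Omega|\,\fint_{(0,T)^N}g$, which is within $o(1)$ of $|\Omega|\,f_{hom}(\xi)$ after a diagonalization $T=T(\e)\to+\infty$ with $\e\,T(\e)\to0$. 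This is extended to piecewise constant $u$ by running the construction on each piece and absorbing the interface contributions into a vanishing boundary layer, and finally to arbitrary $u\in L^p$ by approximating $u$ strongly in $L^p$ by piecewise constant functions, using the strong continuity of $u\mapsto\int_\Omega f_{hom}(u)\,dx$ together with the lower semicontinuity of the $\Gamma$-limsup and a diagonal argument.

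The main obstacle is the interplay of the two limiting procedures in the lower bound: the density $f_{hom}$ is itself a limit as $T\to+\infty$, whereas homogenization forces $T_k=\delta/\e_k$ to grow only as $\e_k\to0$, so one must check that the mean-zero correction $\tilde u_k-\fint_C u_k$ and the boundary mismatch produced by cutting $C/\e_k$ into whole unit cells are negligible uniformly enough for the limits $\e_k\to0$ and $\delta\to0$ to cooperate; the $p$-growth and the equi-integrability afforded by (H2) are precisely what make these error terms controllable, and the identity $f_{hom}=\inf_T g_T$ is what lets the per-cube estimate dispense with any rate of convergence of $g_{T_k}$. By comparison, the existence, convexity and growth of $f_{hom}$ are routine subadditivity and layering arguments, and the recovery sequence is a direct periodic construction.
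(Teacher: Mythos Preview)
The paper does not actually prove this theorem; it states that the proof is omitted, being ``very similar to the one of \cite[Theorem~1.1]{CRZ2011}''. So there is no in-paper argument to compare against. Your outline follows the standard direct route (subadditivity of the cell energies, localization on small cubes for the lower bound, periodic correctors plus density for the upper bound) and is in substance correct, but one step is stated too strongly and, as written, does not quite hold.

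You assert that $f_{hom}=\inf_T g_T$ and hence $f_{hom}\le g_T$ ``for every $T$'', and you lean on this in the liminf step to ``dispense with any rate of convergence of $g_{T_k}$''. The tiling argument you invoke only gives $g_{kT}\le g_T$ when the translate of $(0,T)^N$ by $T$-multiples leaves $f(\langle\cdot\rangle,\cdot)$ invariant, i.e.\ when $T\in\mathbb N$; for non-integer $T$ the inequality $f_{hom}\le g_T$ can fail. A concrete example under (H1)--(H2) is $f(x,\xi)=h(\langle x\rangle)+|\xi|^p$ with $h$ nonconstant: here $g_T(\xi)=\fint_{(0,T)^N}h(\langle y\rangle)\,dy+|\xi|^p$, which for suitable non-integer $T$ is strictly below $f_{hom}(\xi)=\int_Q h+|\xi|^p$. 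In your localization $T_k=\delta/\e_k$ is typically non-integer and $C/\e_k$ is not integer-aligned, so the clean chain $\int_C f(x/\e_k,u_k)\,dx\ge\delta^N g_{T_k}(\fint_C u_k)\ge\delta^N f_{hom}(\fint_C u_k)$ is not justified as stated. The repair is exactly the ``boundary mismatch'' you flag at the end: inside $C/\e_k$ pass to the maximal sub-cube of the form $z+(0,m_k)^N$ with $z\in\mathbb Z^N$ and $m_k=\lfloor\delta/\e_k\rfloor\in\mathbb N$, use $f_{hom}\le g_{m_k}$ there, and control both the leftover strip (relative measure $O(\e_k/\delta)$) and the induced shift of the average $\fint u_k$ via the $L^p$-bound on $\{u_k\}$ and the growth condition (H2). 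With this adjustment your liminf argument goes through, and the limsup construction is fine as written.
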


	
	\section{Characterization}\label{Intermediate}
	
	This section is mainly devoted to the proof of Theorem \ref{mainresult} and to the attainment of other related results which will be used in the sequel.\\
	
	\noindent
	We start by recalling the space $\mathcal E_p$ defined by \eqref{Ep} and \eqref{limunif}. 
	Its properties have been presented in \cite[Section 3]{BBS2008}. We briefly recall the main ones. It is a Banach space under the norm
	\[\|f\|_{\mathcal E_p}:=\sup_{\begin{array}{ll}y \in \overline Q, \\
			\xi \in \mathbb R^{d}
	\end{array}}
	\frac{|f (y,\xi)|}{1 + |\xi|^p}.\]
	
	Moreover, ${\mathcal E}^p$ is isomorphic to the space $\mathcal C({\overline Q}\times({\mathbb R}^{d} \cup \{\infty\}))$.

	Before proving Theorem \ref{mainresult} we start commenting on the formulas \eqref{f_hom}.

	Under the assumptions of Theorem \ref{mainresult}, if $f$ belongs to the class \eqref{Ep}, in view of standard relaxation results (see \cite[Proposition 6.11]{DMbook}, \cite[Theorem 6.68 and Remark 6.69]{FLbook}), 
	\begin{align}
		\inf\left\{\liminf_{\varepsilon \to 0}\int_\Omega f\left(\left\langle\frac{x}{\varepsilon}\right\rangle,u_\varepsilon(x)\right)dx: u_\varepsilon \rightharpoonup u \hbox{ in } L^p(\Omega;\mathbb R^d)\right\}= \label{eqco}\\
		\inf\left\{\liminf_{\varepsilon \to 0}\int_\Omega ({\rm co}f)\left(\left\langle\frac{x}{\varepsilon}\right\rangle,u_\varepsilon(x)\right)dx:u_\varepsilon \rightharpoonup u \hbox{ in } L^p(\Omega;\mathbb R^d)\right\}, \nonumber
	\end{align}
	where ${\rm co} f$ stands for the convex envelope of $f$ with respect to the second variable.
	
	Theorem \ref{asthm1.1CRZ} (see the proof of \cite[Theorem 1.1.]{CRZ2011} within a slightly more general context), guarantees that 
	\begin{align*}
		\inf\left\{\liminf_{\varepsilon \to 0}\int_\Omega f\left(\left\langle\frac{x}{\varepsilon}\right\rangle,u_\varepsilon(x)\right)dx:u_\varepsilon \rightharpoonup u \hbox{ in } L^p(\Omega;\mathbb R^d)\right\}=
		\int_\Omega f_{\rm hom}(u(x))dx,
	\end{align*}
	where $f_{\rm hom}$ is as in \eqref{f_hom}.
	On the other hand, \eqref{eqco}
	\begin{align*}
		\inf\left\{\liminf_{\varepsilon \to 0}\int_\Omega f\left(\left\langle\frac{x}{\varepsilon}\right\rangle,u_\varepsilon(x)\right)dx:u_\varepsilon \rightharpoonup u \hbox{ in } L^p(\Omega;\mathbb R^d)\right\}=
		\int_\Omega ({\rm co} f)_{\rm hom}(u(x))dx,
	\end{align*}
	where, for every $\xi \in \mathbb R^d$, $({\rm co}f)_{hom}(\xi)=$
	\begin{equation}
		\label{cof_hom}
		\lim_{T \rightarrow +\infty} \frac{1}{T^N} \inf \left\{\int_{(0,T)^N} ({\rm co}f)(\langle x \rangle , \xi + v(x)) dx : v \in L^p((0,T)^N, \R^d), \int_{(0,T)^N} v(x) dx = 0 \right\}.
	\end{equation}
	Finally arguing as in \cite[Theorem 14.7]{BDF} it results that 
	$$
	({\rm co} f)_{\rm hom}(\xi)=   \inf \left\{\int_{(0,1)^N} ({\rm co}f)(\langle x \rangle , \xi + v(x)) dx : v \in L^p((0,1)^N, \R^d), \int_{(0,1)^N} v(x) dx = 0 \right\}.
	$$
	Hence, by easier arguments than those in \cite[Proposition 6.24]{FLbook}, we can conclude that $f_{\rm hom}= ({\rm co}f)_{\rm hom}$.\\
	
	The next result, whose proof is omitted, as in \cite[Corollary 3.8]{BBS2008}, follows from the fact that conditions \textit{i), ii)}, and \textit{iii)} in Theorem \ref{mainresult} do not depend on the sequence $\{\varepsilon_n\}$.
	
	\begin{corollary}
		\label{corollary}
		Let $\{u_n\}$ be a bounded sequence in $L^p(\Omega,\R^d)$ and assume that there exists a vanishing sequence $\{\varepsilon_n\}$ such that $\{(\langle\frac{\cdot}{\varepsilon_n}\rangle, u_n)\}$ generates the Young measure $\{\nu_{(x,y)}\otimes dy\}_{x\in \Omega}$. Then $\{\nu_{(x,y)}\}_{(x,y)\in\Omega\times Q}$ is a two-scale Young measure.
	\end{corollary}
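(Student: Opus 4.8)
The plan is to verify that the family $\{\nu_{(x,y)}\}_{(x,y)\in\Omega\times Q}$ fulfils the three conditions i), ii), iii) of Theorem \ref{mainresult} and then to invoke its sufficiency (``if'') part to conclude that $\{\nu_{(x,y)}\}$ is a two-scale Young measure. The conceptual heart of the argument is that conditions i)--iii) are intrinsic to the parametrized measure and make no reference to the vanishing sequence: hence it is enough to extract them from the \emph{single} generating sequence furnished by the hypothesis, after which the sufficiency part produces generating sequences for \emph{every} $\{\varepsilon_n\}\to 0$, as required by Definition \ref{def-two-scale}. To begin, since $\{u_n\}$ is bounded in $L^p$ (so that $\{(\langle\cdot/\varepsilon_n\rangle,u_n)\}$ is tight), Theorem \ref{theoremYoung} (iii) applied with $g(\xi)=|\xi|^p$ gives $\nu_{(x,y)}\in\mathcal P(\R^d)$ for a.e.\ $(x,y)\in\Omega\times Q$, which makes $\{\nu_{(x,y)}\}$ eligible for Theorem \ref{mainresult}.

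For i), I would apply Theorem \ref{theoremBarchiesi} (i) to the admissible integrand $\mathcal W(x,y,\lambda):=|\lambda|^p$ to obtain
\[
\iint_{\Omega\times Q}\int_{\R^d}|\xi|^p\,d\nu_{(x,y)}(\xi)\,dy\,dx\le\liminf_{n\to\infty}\int_\Omega|u_n(x)|^p\,dx<+\infty,
\]
which is precisely the $L^1(\Omega\times Q)$ integrability in i). Condition ii) then follows from Proposition \ref{Pedregal2006-2.3} and Remark \ref{remarkud}: along the subsequence for which $u_n\rightsquigarrow u_1$, the barycenter $u_1(x,y)=\int_{\R^d}\xi\,d\nu_{(x,y)}(\xi)$ is the two-scale limit of $\{u_n\}$, hence $Q$-periodic in $y$, and i) together with Jensen's inequality gives $u_1\in L^p(\Omega,L^p_{per}(Q))$; moreover $u(x):=\int_Q u_1(x,y)\,dy$ is the weak $L^p$-limit of $\{u_n\}$, and it is well defined independently of the subsequence since $\nu$ determines $u_1$.

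There remains iii), which is the delicate point. Fixing $f\in\mathcal E_p$, I would use $f\ge\co f$ together with Jensen's inequality for the convex map $\co f(y,\cdot)$ against the probability measure $\nu_{(x,y)}$, getting for a.e.\ $x$
\[
\int_Q\int_{\R^d}f(y,\xi)\,d\nu_{(x,y)}(\xi)\,dy\ge\int_Q(\co f)\big(y,u_1(x,y)\big)\,dy.
\]
Writing $v_x(y):=u_1(x,y)-u(x)$, which belongs to $L^p(Q,\R^d)$ and has zero mean, and recalling that $\langle y\rangle=y$ on $Q=(0,1)^N$, the right-hand side is bounded below by the unit-cell infimum defining $(\co f)_{hom}(u(x))$ recalled after \eqref{cof_hom}; since $f_{hom}=(\co f)_{hom}$, this yields $\int_Q\int_{\R^d}f(y,\xi)\,d\nu_{(x,y)}(\xi)\,dy\ge f_{hom}(u(x))$ for a.e.\ $x\in\Omega$, i.e.\ iii). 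With i)--iii) established, the sufficiency part of Theorem \ref{mainresult} finishes the argument. The hard part is twofold: making the Jensen--plus--cell-formula chain of iii) rigorous (controlling the $p$-growth so every integral is finite, and checking that $v_x$ is an admissible competitor in the unit-cell formula for $(\co f)_{hom}$, relying on the nontrivial identity $f_{hom}=(\co f)_{hom}$), and, above all, the sufficiency direction of Theorem \ref{mainresult}, which is the genuinely substantial input upgrading these $\varepsilon_n$-independent conditions into generating sequences for every vanishing $\{\varepsilon_n\}$.
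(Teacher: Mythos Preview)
Your strategy is exactly the one the paper indicates for its omitted proof: since conditions i)--iii) of Theorem~\ref{mainresult} are intrinsic to $\nu$ and make no reference to the scale sequence, one extracts them from the single generating pair $(\{\varepsilon_n\},\{u_n\})$ and then invokes the sufficiency direction (Theorem~\ref{necThmmain}). Two remarks are in order. First, your citations for i) and ii) (Theorem~\ref{theoremBarchiesi}(i) and Proposition~\ref{Pedregal2006-2.3}) are both stated for families already known to be two-scale Young measures, so as written there is a mild circularity; the fix is immediate---replace the former by Theorem~\ref{theoremYoung}(iv) applied to the classical Young measure $\{\nu_{(x,y)}\otimes dy\}_{x\in\Omega}$ with the normal integrand $|\lambda|^p$, and obtain the barycenter identity in ii) by testing the generating property against $z(x)\phi(y)\xi_i$ (using boundedness in $L^p$ to handle the unbounded test) and comparing with the definition of two-scale convergence. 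Second, your derivation of iii) via $f\ge\co f$, Jensen for $\co f(y,\cdot)$ against $\nu_{(x,y)}$, and the single-cell formula for $(\co f)_{\hom}$ together with $f_{\hom}=(\co f)_{\hom}$ is a genuinely more direct alternative to what the paper does in the proof of \eqref{cond21} inside Theorem~\ref{neccond}: there the argument proceeds by coercifying $f$ (adding $\alpha|\xi|^p$), truncating from below at level $-M$, applying the $\Gamma$-convergence result Theorem~\ref{asthm1.1CRZ}, and finally localizing. Your route avoids that machinery entirely, at the price of leaning on the identities established in the discussion just before the corollary; both approaches are correct.
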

	We start by addressing the proof of the necessity of Theorem \ref{mainresult}.

	\begin{theorem}\label{neccond}
		Let $\Omega$ be a bounded and open subset of 
		$\R^N$. Let $\nu\equiv\{\nu_{(x,y)}\}_{(x,y)\in \Omega \times Q} \in L^{\infty}_w( \Omega \times Q, \M(\R^d))$ be a two-scale Young measure. 
		Then there exist $u_1\in L^p(\Omega; L^p_{per}(Q, \R^d))$ and $u\in L^p(\Omega, \R^d)$ such that
		\begin{equation}
			\label{cond11}
			u_1(x,y)=\int_{\R^d}\xi d\nu_{(x,y)}(\xi),
		\end{equation}
		\begin{equation}\label{3.21}u(x)=\int_Q u_1(x,y)dy \text{ for a.e. }x \in \Omega,\end{equation}
		\begin{equation}
			\label{cond21}
			f_{hom}(u(x)) \le \int_Q\int_{\R^d} f(y,\xi)d\nu_{(x,y)}(\xi)dy,\text{ for a.e. } x \in \Omega, \text{ for  any } f \in \mathcal{E}_p, 
		\end{equation}
		where $f_{hom}(F)$ is defined as in \eqref{f_hom}
		and  
		\begin{equation}
			\label{cond31}
			\int_{\R^d} |\xi|^p d\nu_{(x,y)}(\xi) \in L^1(\Omega \times Q).
		\end{equation}
	\end{theorem}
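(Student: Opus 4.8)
The plan is to extract all four conclusions from one carefully chosen generating sequence, leaning on the two tools the excerpt has prepared: the two-scale Fundamental Theorem (Theorem \ref{theoremBarchiesi}) and the homogenization formula for the $\Gamma$-limit density (Theorem \ref{asthm1.1CRZ}). Fix a vanishing sequence $\{\varepsilon_n\}$. Since $\nu$ is a two-scale Young measure, Definition \ref{def-two-scale} furnishes a bounded sequence $\{u_n\}\subset L^p(\Omega;\R^d)$ generating $\{\nu_{(x,y)}\otimes dy\}$; invoking Remark \ref{narrow} iii) I would select it so that $\{|u_n|^p\}$ is equi-integrable, and pass to a (non-relabelled) subsequence with $u_n\rightharpoonup u$ in $L^p$. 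Proposition \ref{Pedregal2006-2.3} identifies the two-scale limit as $u_1(x,y)=\int_{\R^d}\xi\,d\nu_{(x,y)}(\xi)$, which is \eqref{cond11}, and Remark \ref{remarkud} gives $u(x)=\int_Q u_1(x,y)\,dy$, which is \eqref{3.21}.

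For the moment bound I would apply Theorem \ref{theoremBarchiesi} (i) with the continuous, hence admissible, integrand $\mathcal W(x,y,\xi)=|\xi|^p$: the liminf inequality combined with $\sup_n\|u_n\|_{L^p}<\infty$ yields $\iint_{\Omega\times Q}\int_{\R^d}|\xi|^p\,d\nu_{(x,y)}(\xi)\,dy\,dx<\infty$, which is \eqref{cond31}. Because $p>1$, this makes $\nu_{(x,y)}$ have a finite first moment for a.e. $(x,y)$, so $u_1$ in \eqref{cond11} is well defined; Jensen's inequality $|u_1(x,y)|^p\le\int_{\R^d}|\xi|^p\,d\nu_{(x,y)}(\xi)$ then places $u_1\in L^p(\Omega\times Q)$, and its identification as a two-scale limit supplies the $Q$-periodicity in the fast variable, giving $u_1\in L^p(\Omega;L^p_{per}(Q,\R^d))$.

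The heart of the statement is the Jensen-type inequality \eqref{cond21}. Set $\overline f(x):=\int_Q\int_{\R^d}f(y,\xi)\,d\nu_{(x,y)}(\xi)\,dy$ and fix an arbitrary open $A\subseteq\Omega$; note $\{u_n|_A\}$ generates $\{\nu_{(x,y)}\}_{(x,y)\in A\times Q}$. On one side, since $f\in\mathcal E_p$ has $p$-growth and $\{|u_n|^p\}$ is equi-integrable, the family $\{f(\langle\cdot/\varepsilon_n\rangle,u_n)\}$ is equi-integrable, so Theorem \ref{theoremBarchiesi} (ii) gives $\int_A f(\langle x/\varepsilon_n\rangle,u_n)\,dx\to\int_A\overline f\,dx$. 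On the other side, $u_n|_A\rightharpoonup u|_A$ in $L^p(A)$, and the $\Gamma$-liminf part of Theorem \ref{asthm1.1CRZ}, whose density is exactly the $f_{hom}$ of \eqref{f_hom}, yields $\int_A f_{hom}(u)\,dx\le\liminf_n\int_A f(\langle x/\varepsilon_n\rangle,u_n)\,dx$. Comparing, $\int_A f_{hom}(u)\,dx\le\int_A\overline f\,dx$ for every open $A$; since $\overline f\in L^1(\Omega)$ and $f_{hom}(\xi)\le C(1+|\xi|^p)$ (take $v=0$ in \eqref{f_hom}), letting $A$ shrink to Lebesgue points upgrades this to the pointwise bound \eqref{cond21}.

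The hard part is the applicability of Theorem \ref{asthm1.1CRZ}, stated under the coercivity bound (H2), whereas a general $f\in\mathcal E_p$ only satisfies $|f|\le C(1+|\xi|^p)$ and may fail to be coercive (if it is genuinely unbounded below its $f_{hom}$ is $-\infty$ and \eqref{cond21} is trivial, so the issue is the intermediate case). I expect this to be the main obstacle, and would resolve it through the convexification identity $f_{hom}=(\co f)_{hom}$ recorded after \eqref{cof_hom}, which reduces the $\Gamma$-liminf inequality to the convex integrand $\co f$; alternatively, one localizes at a Lebesgue point $x_0$, replaces $\nu$ near $x_0$ by its average to obtain (via Lemma \ref{BBS_2.9}) a homogeneous two-scale Young measure with underlying deformation $u(x_0)$, rescales the generating sequence onto the cube $(0,T_n)^N$ with $T_n=1/\varepsilon_n$, recognizes the rescaled energy as an admissible competitor in the cell problem \eqref{f_hom}, and passes to the limit using the local uniform convergence of the cell averages to the convex $f_{hom}$. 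Either route confines the analytic work to this single inequality, the remaining conclusions \eqref{cond11}, \eqref{3.21} and \eqref{cond31} being immediate consequences of the tools above.
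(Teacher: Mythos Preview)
Your overall strategy coincides with the paper's: pick a $p$-equi-integrable generating sequence, read off \eqref{cond11} and \eqref{3.21} from Proposition~\ref{Pedregal2006-2.3} and Remark~\ref{remarkud}, obtain \eqref{cond31} by testing with $|\xi|^p$, and for \eqref{cond21} compute the exact limit of $\int_A f(\langle\cdot/\varepsilon_n\rangle,u_n)$ via equi-integrability and compare it with the $\Gamma$-liminf inequality of Theorem~\ref{asthm1.1CRZ} on every open $A\subset\Omega$, then localize. One small correction: to obtain a $p$-equi-integrable generating sequence \emph{of the right form} $\{(\langle\cdot/\varepsilon_n\rangle,u_n)\}$ the paper invokes the Decomposition Lemma on the given $\{u_n\}$; Remark~\ref{narrow}\,(iii) produces an equi-integrable generator of the \emph{classical} Young measure $\nu_{(x,y)}\otimes dy$ but does not force its first component to be $\langle\cdot/\varepsilon_n\rangle$.

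The only substantive divergence is in how the missing coercivity hypothesis (H2) is supplied. The paper uses a two-parameter approximation: set $f_M:=\max\{-M,f\}$ and $f_{M,\alpha}:=f_M+\alpha|\xi|^p$, so that $f_{M,\alpha}$ satisfies (H2) and $f_{M,\alpha}\ge f$ gives $(f_{M,\alpha})_{\hom}\ge f_{\hom}$; Theorem~\ref{asthm1.1CRZ} then yields $\liminf_n\int_A f_{M,\alpha}(\langle\cdot/\varepsilon_n\rangle,u_n)\ge\int_A f_{\hom}(u)$, after which one sends $\alpha\downarrow 0$ using $\sup_n\|u_n\|_{L^p}<\infty$ and removes the truncation $M$ via Chebyshev and the equi-integrability of $\{f(\langle\cdot/\varepsilon_n\rangle,u_n)\}$. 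Your suggestion (a), passing to $\co f$ through the identity $f_{\hom}=(\co f)_{\hom}$, does not by itself close the gap: $\co f$ is convex but still need not satisfy (H2), so Theorem~\ref{asthm1.1CRZ} remains inapplicable and an additional argument (in effect, the same $+\alpha|\xi|^p$ perturbation) is needed. Your suggestion (b) is a legitimate but considerably longer route, closer in spirit to the sufficiency direction; the paper's truncation-plus-perturbation trick is the shortest way to finish.
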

	
	\begin{proof}
		By definition of two-scale Young measure there exists a sequence $\{u_n\}\subset L^p(\Omega, \R^d)$ which generates $\{\nu_{x,y}\}_{(x,y)\in \Omega \times Q}$,  whose two-scale limit is a function $u_1$ as in the statement, which, in particular, as observed in Proposition \ref{Pedregal2006-2.3}, satisfies \eqref{cond11}.
		Also the weak limit of $\{u_n\}$
		is a function $u \in L^p(\Omega;\mathbb R^d)$ such that $\int_Q u_1(x,y)dy=u(x)$.

		Property \eqref{cond31} follows from \cite[Theorem 3.6 (iv)]{Barchiesi2006} applied to the function $|\cdot|^p$, once we replace the original generating sequence $\{u_n\}$ by a $p$-equi-integrable one, which is always possible, in view of the Decomposition Lemma \cite[Lemma 8.13]{FLbook}.
		
		The proof of \eqref{cond21} is based on the same argument as the one for proving \cite[Lemma 3.1-(ii)]{BBS2008}, with the difference that we use an argument entirely similar to \cite[Theorem 1.1]{CRZ2011} instead of \cite[Theorem 14.5]{BDF}. We write the proof for the readers' convenience.\\
		
		\noindent
		Since $f\in\mathcal{E}_p$, then, as observed in \cite[Section 3]{BBS2008} there exists a constant $c$ such that $|f(x,\xi)|\le c(1+|\xi|^p)$ for every $(x,\xi)\in Q\times\R^d$. In order to apply Theorem \ref{asthm1.1CRZ}, the function $f(x,\cdot)$ must also be $p$-coercive. Since this is not the case we introduce an auxiliary function. Fix $M>0$ and consider $f_M(x,\xi):=\max\{-M, f(x,\xi)\}$, with $(x,\xi)\in Q\times\R^d$. Now we fix $\alpha>0$ and we define $f_{M,\alpha}(x,\xi):=f_M(x,\xi)+\alpha|\xi|^p$ for every $(x,\xi)\in Q\times\R^d$.
		By construction
		\begin{equation*}
			\alpha|\xi|^p-M\le|f_{M,\alpha}(x,\xi)|\le (c+\alpha)(1+|\xi|^p), \quad (x,\xi)\in \overline Q\times\R^d.
		\end{equation*}
		By Theorem \ref{asthm1.1CRZ}, it follows that for every $A\in \mathcal{A}(\Omega)$
		\begin{align}
			\label{disug_alphaM1}
			\liminf_{n\to\infty}\int_A f_{M,\alpha}\left(\Big\langle\frac{x}{\e_n}\Big\rangle, u_n\right)dx\ge \int_A (f_{M, \alpha})_{hom}(u)dx\ge\int_A f_{hom}(u)dx,
		\end{align}
		with $f_{hom}$ defined as in \eqref{f_hom}, since $f_{M,\alpha}\geq f$. By construction we have that
		\begin{align}
			\label{disug_alphaM2}
			\liminf_{n\to\infty}\int_A f_{M,\alpha}\left(\Big\langle\frac{x}{\e_n}\Big\rangle, u_n\right)dx\le \liminf_{n\to\infty}\int_A f_{M}\left(\Big\langle\frac{x}{\e_n}\Big\rangle, u_n\right)dx + \alpha\sup_{n\in\N}\int_A |u_n|^p dx.
		\end{align}
		Combinig \eqref{disug_alphaM1} and \eqref{disug_alphaM2} and passing to the limit for $\alpha\to 0$ we get
		\begin{equation}
			\label{disugM1}
			\liminf_{n\to\infty}\int_A f_{M}\left(\Big\langle\frac{x}{\e_n}\Big\rangle, u_n\right)dx \ge \int_A f_{hom}(u)dx.
		\end{equation}
		Now we consider the set
		\begin{equation*}
			A_n^M:=\left\{ x\in A: f\left(\Big\langle\frac{x}{\e_n}\Big\rangle, u_n\right) \le -M \right\}
		\end{equation*}
		and we observe that by Chebyshev's inequality it holds
		\begin{equation*}
			\mathcal{L}^N(A_n^M)\le \frac{c}{M},
		\end{equation*}
		for some constant $c$ independent from $n$ and $M$. By definition of $f_M$ we have that
		\begin{align}
			\label{disugM2}
			\int_A f_{M}\left(\Big\langle\frac{x}{\e_n}\Big\rangle, u_n\right)dx&=-M\mathcal{L}^N(A_n^M)+ \int_{A\setminus A_n^M} f\left(\Big\langle\frac{x}{\e_n}\Big\rangle, u_n\right)dx\nonumber\\
			&\le\int_{A\setminus A_n^M} f\left(\Big\langle\frac{x}{\e_n}\Big\rangle, u_n\right)dx
		\end{align}
		
		Since $\{u_n\}$ converges weakly in $L^p$ then, in view of the Decomposition Lemma (\cite[Lemma 8.13]{FLbook}), up to a subsequence, we can replace $\{u_n\}$ by a $p$-equi-integrable one, still denoted by $\{u_n\}$,   and so, together with the $p$-growth condition of $f(x,\cdot)$, $\{f(\cdot, u_n)\}$ is also equi-itegrable and so
		\begin{equation}
			\label{residuo}
			\int_{A_n^M} f\left(\Big\langle\frac{x}{\e_n}\Big\rangle, u_n\right)dx \to 0 \quad \text{as } M \to +\infty
		\end{equation}
		uniformly with respect to $n$. Combining \eqref{disugM1}, \eqref{disugM2}, and \eqref{residuo} we get
		\begin{equation*}
			\liminf_{n\to +\infty}\int_{A} f\left(\Big\langle\frac{x}{\e_n}\Big\rangle, u_n\right)dx \ge \int_{A} f_{hom}(u)dx.
		\end{equation*}
		Since $\{f(\cdot, u_n)\}$ is equi-integrable, by Theorem \ref{theoremYoung} (v) we get
		\begin{equation*}
			\lim_{n\to \infty} \int_A f\left(\Big\langle\frac{x}{\e_n}\Big\rangle, u_n\right)dx = \int_A \int_Q \int_{\R^d} f(y, \xi)d\nu_{(x,y)}(\xi)dx,
		\end{equation*}
		the proof follows by combining the last two inequalities and using a localization argument.
	\end{proof}
	
	We start addressing the proof of the sufficiency of Theorem \ref{mainresult}, by introducing some preliminary notions and by proving some intermediate steps.
	\noindent
	For every $F \in \mathbb R^d$ let
	\begin{align}\label{MF} M_F := \Bigg\{\nu \in L^{\infty}_w(Q, \M(\R^d)) : \{ \nu_y\}_{y \in Q} \text{ is a homogeneous two-scale}\\\ \text{ Young measure and } \int_Q \int_{\R^d} \xi d \nu_y(\xi)dy = F\Bigg \}. \nonumber
	\end{align}

	\begin{remark}
		\label{remark3.3BBS}
		Exactly as observed  in \cite[Remark 3.3]{BBS2008}, the set $M_F$ is independent of $\Omega$, i.e. if $\nu \in M_F$ and $\Omega' \subset \R^N$ is another domain, then for all vanishing sequences $\{ \varepsilon_n\}$ there exists a sequence $  \{v_n\} \subset L^p(\Omega, \R^d)$ such that $\{ (\langle \cdot/\varepsilon_n\rangle, v_n)\}$ generates $\nu_y \otimes dy$. Indeed, let $r > 0$ such that $\Omega' \subset r\Omega$. In fact, given an arbitrary  vanishing sequence $\{ \varepsilon_n \}$, define $\delta_n := \varepsilon_n/r$. Then there exists a sequence $\{ u_n\} \subset L^{p}(\Omega, \R^d)$ such that $\{ (\langle \cdot / \delta_n\rangle,  u_n)\}$ generates the homogeneous Young measure $\nu_y \otimes dy$. Define now $v_n(x):= u_n(x/r)$ so that $\{v_n \} \subset L^{p}(r\Omega, \R^d)$ and thus  $\subset L^{p}(\Omega', \R^d)$. By changing  variables it follows that the sequence $\{ (\langle \cdot/ \varepsilon_n\rangle, v_n)\}$ generates the homogeneous Young measure $\nu_y \otimes dy$ as well.
	\end{remark}
	
	
	\begin{lemma}
		\label{lemmf}
		Let $F \in \mathbb R^d$, $M_F$ as in \eqref{MF} and $\mathcal E_p$ be as in \eqref{Ep}, then
		$M_F$ is a convex and weak* closed subset of $(\mathcal{E}_p)'$.
	\end{lemma}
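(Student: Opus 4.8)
The plan is to handle the two assertions separately, throughout identifying $M_F$ with a subset of $(\mathcal{E}_p)'$ via the pairing $\langle\nu,f\rangle=\int_Q\int_{\R^d}f(y,\xi)\,d\nu_y(\xi)\,dy$. This is well defined on the whole of $\mathcal{E}_p$ because its elements have $p$-growth while each $\nu\in M_F$ has finite $p$-th moment. I would repeatedly exploit that the coordinate maps $(y,\xi)\mapsto\xi_i$, the constant $1$, and $(y,\xi)\mapsto|\xi|^p$ all belong to $\mathcal{E}_p$ defined in \eqref{Ep} (for the coordinate maps the assumption $p>1$ is essential, since then $\xi_i/(1+|\xi|^p)\to0$ uniformly in $y$), so that the mean and the $p$-th moment of an element of $M_F$ are read off by testing against fixed elements of $\mathcal{E}_p$.

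For convexity, given $\nu^0,\nu^1\in M_F$ and $\lambda\in(0,1)$, I would realize the convex combination $\nu^\lambda:=(1-\lambda)\nu^0+\lambda\nu^1$ as the \emph{average} of a \emph{spliced} measure. Viewing $\nu^0,\nu^1$ as homogeneous two-scale Young measures over $Q\times Q$ (legitimate by Remark \ref{remark3.3BBS}), both with underlying deformation $F$, I fix a measurable $D\subset Q$ with $\mathcal{L}^N(D)=1-\lambda$ and splice them through Lemma \ref{BBS3.4}: the measure $\sigma_{(x,y)}:=\nu^0_y$ on $D\times Q$ and $\sigma_{(x,y)}:=\nu^1_y$ on $(Q\setminus D)\times Q$ is a two-scale Young measure whose underlying deformation equals the constant $F$ on all of $Q$. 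Applying Lemma \ref{BBS_2.9} to $\sigma$ then yields a homogeneous two-scale Young measure $\bar\sigma$ with underlying deformation $F$, and the key computation, directly from Definition \ref{average} with $\mathcal{L}^N(Q)=1$, gives $\langle\bar\sigma_y,\phi\rangle=\int_D\langle\nu^0_y,\phi\rangle\,dx+\int_{Q\setminus D}\langle\nu^1_y,\phi\rangle\,dx=(1-\lambda)\langle\nu^0_y,\phi\rangle+\lambda\langle\nu^1_y,\phi\rangle$, that is, $\bar\sigma_y=\nu^\lambda_y$. Hence $\nu^\lambda\in M_F$.

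For weak* closedness the delicate point is that $M_F$ is \emph{unbounded} in $(\mathcal{E}_p)'$, since its elements may carry arbitrarily large $p$-th moment (e.g. $\tfrac12\delta_{F+te}+\tfrac12\delta_{F-te}$ with $t\to\infty$), so one cannot argue by sequences on $M_F$ directly. I would instead invoke the Krein--\v{S}mulian theorem: having already shown $M_F$ convex, it is weak* closed as soon as $M_F\cap\overline{B}_R$ is weak* closed for every $R>0$. Because $\mathcal{E}_p\cong\mathcal{C}(\overline{Q}\times(\R^d\cup\{\infty\}))$ is separable, the ball $\overline{B}_R$ is weak* metrizable, so it suffices to prove that $M_F\cap\overline{B}_R$ is \emph{sequentially} weak* closed. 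Thus I take $\nu^k\in M_F\cap\overline{B}_R$ with $\nu^k\overset{*}{\rightharpoonup}\nu$: testing against $(y,\xi)\mapsto\xi_i$ gives at once $\int_Q\int_{\R^d}\xi\,d\nu_y(\xi)\,dy=F$, and testing against $|\xi|^p$ yields a uniform $p$-th moment bound; using Remark \ref{narrow}(iii) I fix a vanishing sequence $\{\varepsilon_n\}$ and pick for each $\nu^k$ a $p$-equi-integrable generating sequence $\{u^k_n\}_n$ with $\limsup_n\|u^k_n\|_{L^p}^p$ controlled uniformly in $k$.

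The hard part is then the diagonal extraction. Using the metric inducing the weak* topology on $\overline{B}_R$, I would select $k(n)\to\infty$ slowly enough that $w_n:=u^{k(n)}_n$ generates $\nu_y\otimes dy$ with respect to $\{\varepsilon_n\}$, combining by the triangle inequality the convergence (in $n$, for fixed $k$) of the empirical measures of $u^k_n$ to $\nu^k$ with the weak* convergence $\nu^k\rightharpoonup\nu$, while keeping the whole construction inside a fixed ball. The sequence $\{w_n\}$ is bounded in $L^p$, so Corollary \ref{corollary} shows that $\{\nu_y\}$ is a (homogeneous) two-scale Young measure; $L^p$-boundedness together with the coercivity of $|\cdot|^p$ forces $\nu_y\in\mathcal{P}(\R^d)$ for a.e.\ $y$ by Theorem \ref{theoremYoung}(iii); and its mean is $F$. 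Hence $\nu\in M_F$, so $M_F\cap\overline{B}_R$ is sequentially weak* closed and the proof concludes by Krein--\v{S}mulian. I expect the diagonalization, and the bookkeeping required to keep every object inside a common ball, to be the only genuinely technical step, everything else reducing to the lemmas already established.
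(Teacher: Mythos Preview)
Your proposal is correct and follows essentially the same route as the paper: convexity via the splicing Lemma~\ref{BBS3.4} followed by averaging Lemma~\ref{BBS_2.9}, and weak* closedness via a diagonalization over generating sequences, testing the mean against the coordinate functions $(y,\xi)\mapsto\xi_i\in\mathcal{E}_p$.

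The one genuine difference is your treatment of sequential versus topological weak* closedness. The paper simply asserts that since $\mathcal{E}_p$ is separable, $(\mathcal{E}_p)'$ is metrizable, and then passes directly to a sequence $\nu^n\overset{*}{\rightharpoonup}\nu$ for an arbitrary $\nu\in\overline{M_F}$. As you correctly observe, this is not literally true: the weak* topology on the dual of an infinite-dimensional Banach space is never metrizable globally, only on norm-bounded sets, and $M_F$ is unbounded. Your detour through Krein--\v{S}mulian (convexity reduces weak* closedness to weak* closedness of $M_F\cap\overline{B}_R$, where metrizability does hold) is therefore a genuine improvement in rigor over the paper's argument, at the cost of having to keep the diagonal sequence inside a fixed ball. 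The paper's diagonalization and yours are otherwise identical in spirit.
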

	
	\begin{proof}
		First we show that $M_F$ is a subset of $(\mathcal{E}_p)'$. Fix $\nu\in\M_F$ and identify it with the homogeneous Young measure $\nu_y \otimes dy$. Let $\{v_n\}\subset L^p(\Omega, \R^d)$ be the sequence  such that $\left\{\left(\langle \frac{\cdot}{\varepsilon_n}\rangle,v_n\right)\right\}$ generates $\nu$. 
		
		Up to a subsequence, one can assume that $\{v_n\}$ is $p$-equi-integrable, see \cite[Lemma 8.12]{FLbook}. Consequently, 
		by \cite[Theorem 3.6, iv)]{Barchiesi2006} it holds that
		\begin{equation*}
			K:=\iint_{Q\times\R^d} |\xi|^p d\nu_y(\xi)dy<+\infty.
		\end{equation*}
		By assumption, for a.e. $y\in Q$, the measure $\nu_y$ is a probability measure on $\Omega$, so for every $f\in \mathcal{E}_p$ we have
		\begin{equation*}
			\int_{Q}\int_{\R^d}f(y, \xi)d\nu_y(\xi)dy\le ||f||_{\mathcal{E}_p}\int_{Q}\int_{\R^d}(1+|\xi|^p)d\nu_y(\xi)dy=(1+K)||f||_{\mathcal{E}_p},
		\end{equation*}
		hence $M_F\subset (\mathcal{E}_p)'$. Since $M_F\subset\overline{M_F}$, then in order to show that $M_F$ is weakly* closed it is enough to show that its weak* closure $\overline{M_F}\subset M_F$. Since $\mathcal{E}_p$ is separable then $(\mathcal{E}_p)'$ is metrizable, hence for every $\nu\in \overline{M_F}$ there exists a sequence $\{\nu^n\}\subset M_F$ such that $\nu^n \overset{*}{\rightharpoonup} \nu$ in $(\mathcal{E}_p)'$. For every $n\in\N$, let $\{v^n_k\}\subset L^p(\Omega, \R^d)$ be the sequence generating $\nu^n$. Since the map $(x,\xi)\mapsto \xi$ is an element of $\mathcal{E}_p$, then by the definition of weak* convergence it follows that
		\begin{equation*}
			\lim_{n\to+\infty}\int_Q\int_\Omega \xi d\nu^n_y(\xi)dy=\int_Q\int_\Omega \xi d\nu_y(\xi)dy=F,
		\end{equation*}
		and thus $\nu$ has $F$ as underlying deformation. We have now to show that $\nu$ is an homogeneous two-scale Young measure. For every $z\in L^1(Q)$ and $\phi\in \mathcal C_0(\R^N\times\R^d)$ we have
		\begin{align*}
			&\lim_{k\to +\infty}\lim_{n\to +\infty}\int_Q z(x)\int_{\Omega}\phi\left(\biggl\langle\frac{x}{\e_k}\biggr\rangle, v^n_k(x)\right)dx\\
			& = \lim_{n\to +\infty}\int_Q\int_Q\int_{\R^d} z(x)\phi(y,\xi)d\nu^n_y(\xi)dydx=\int_Q z(x)dx\int_Q\int_{\R^d}\phi(y,\xi)d\nu_y(\xi)dy 
		\end{align*}
		where in the second equality we used the fact that $\mathcal C_0(\R^N \times \R^d)\subset\mathcal{E}_p$. By a diagonalization argument we can find a sequence $\{k_n\}$ such that once defined $u_n:=v_{k_n}^n$, then it holds
		\begin{equation*}
			\lim_{n\to +\infty}\int_Q z(x)\int_{\Omega}\phi\left(\biggl\langle\frac{x}{\e_n}\biggr\rangle, u_n(x)\right)dx=\int_Q z(x)dx\int_Q\int_{\Omega}\phi(y,\xi)d\nu_y(\xi)dy.
		\end{equation*}
		It follows that $\nu$ is a homogeneous two-scale Young measure, which implies that $M_F$ is weakly* closed. Now we have to prove that $M_F$ is a convex set. Fix $\nu, \mu \in M_F$ and $t\in (0,1)$. Consider  $D:= (0,t)\times (0,1)^{N-1}$ and define
		\begin{equation*}
			\sigma_{(x,y)}:=
			\begin{cases}
				&\nu_{y}\quad\text{if }(x,y)\in D\times Q,\\
				&\mu_{y}\quad\text{if }(x,y)\in (Q\setminus D)\times Q.
			\end{cases}
		\end{equation*}
		By Lemma \ref{BBS3.4}, $\{\sigma_{(x,y)}\}_{(x,y)\in Q\times Q}$ is a two-scale Young measure, while by Lemma \ref{BBS_2.9} we have that $\{\overline{\sigma}_y\}_{y\in Q}$ is a homogeneous two-scale Young measure, hence an element of $M_F$. 
		
		Finally, observe that for every $\phi \in L^1(Q, \mathcal C_0(\R^d))$ it holds
		\begin{align}
			&\int_Q\int_{\R^d}\phi(y,\xi)d\overline{\sigma}_y(\xi)dy=\int_Q\int_Q\int_{\R^d} \phi(y,\xi)d\sigma_{(x,y)}(\xi)dydx \nonumber\\
			&=\int_D\int_Q\int_{\R^d} \phi(y,\xi)d\nu_{y}(\xi)dydx+\int_{Q\setminus D}\int_Q\int_{\R^d} \phi(y,\xi)d\mu_{y}(\xi)dydx \label{baricenter}\\
			&=t\int_Q\int_{\R^d} \phi(y,\xi)d\nu_{y}(\xi)dy+(1-t)\int_Q\int_{\R^d} \phi(y,\xi)d\mu_{y}(\xi)dy. \nonumber
		\end{align}
		which means that $\overline{\sigma}=t\nu+(1-t)\mu.$ In order to conclude that $\overline \sigma \in M_F$, we have to show that $\int_Q \int_{\mathbb R^d}\xi d \overline \sigma_y(\xi)dy=F$. In fact, this is the case: it suffices to apply
		\eqref{baricenter} to $\phi(y,\xi)=\xi$, taking into account that $\int_Q\int_{\R^d} \phi(y,\xi)d\mu_{y}(\xi)dy= \int_Q\int_{\R^d} \phi(y,\xi)d\nu_{y}(\xi)dy=F.$ 
	\end{proof}

	Now we show  that conditions i), ii) and iii) of Theorem \ref{mainresult} are sufficient to characterize two-scale
	Young measures. As in \cite{BBS2008} we start from the homogeneous case.
	The non-homogeneous one will be obtained through a suitable approximation of two-scale Young measures by piecewise constant
	ones.

	\begin{lemma}
		\label{suffhom}
		Let $F \in \R$ and $\nu \in L^{\infty}_w(Q, \M(\R^d))$ be such that $\nu_y \in \mathcal{P}(\R^d)$ for a.e. $y \in Q$. Assume that 
		\begin{equation}
			\label{cond1}
			F = \int_Q \int_{\R^d} \xi d\nu_y(\xi)dy,
		\end{equation}
		\begin{equation}
			\label{cond3}
			\int_Q \int_{\R^d} |\xi|^p d\nu_y(\xi)dy < +\infty,
		\end{equation}
		and that
		\begin{equation}
			\label{cond2}
			f_{hom}(F) \le \int_Q\int_{\R^d} f(y,\xi)d\nu_y(\xi)dy,
		\end{equation}
		for every $f \in \mathcal{E}_p$ where $f_{hom}(F)$ is defined in \eqref{f_hom}
		and $\mathcal E_p$ is the space introduced in \eqref{Ep}
		
		Then $\{ \nu_y \}_{y \in Q}$ is a homogeneous two-scale Young measure.
	\end{lemma}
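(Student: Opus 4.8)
The plan is to prove membership $\nu\in M_F$ by a duality/separation argument, reducing the whole statement to the construction of a single explicit family of competitors. First I note that hypotheses guarantee $\nu\in(\mathcal E_p)'$: since $\nu_y\in\mathcal P(\R^d)$ and, by \eqref{cond3}, the map $(x,y)\mapsto\int_{\R^d}|\xi|^p\,d\nu_y(\xi)$ is integrable, for every $f\in\mathcal E_p$ one has $\int_Q\int_{\R^d}f(y,\xi)\,d\nu_y(\xi)\,dy\le\|f\|_{\mathcal E_p}\big(1+\int_Q\int_{\R^d}|\xi|^p\,d\nu_y(\xi)\,dy\big)<+\infty$, so $\nu$ is a bounded linear functional on $\mathcal E_p$. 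By Lemma \ref{lemmf}, $M_F$ is a convex and weak$^*$ closed subset of $(\mathcal E_p)'$, and it is nonempty because the constant sequence $u_n\equiv F$ generates $\{\delta_F\}_{y\in Q}\in M_F$. Arguing by contradiction, suppose $\nu\notin M_F$. Since $\mathcal E_p$ is separable and $M_F$ is weak$^*$ closed and convex while $\{\nu\}$ is weak$^*$ compact, the second geometric form of the Hahn--Banach theorem, applied on the locally convex space $(\mathcal E_p)'$ with its weak$^*$ topology (whose continuous linear functionals are exactly the elements of $\mathcal E_p$), yields $f\in\mathcal E_p$ with
\[
\langle\nu,f\rangle:=\int_Q\int_{\R^d}f(y,\xi)\,d\nu_y(\xi)\,dy\;<\;\inf_{\mu\in M_F}\int_Q\int_{\R^d}f(y,\xi)\,d\mu_y(\xi)\,dy.
\]

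The heart of the matter is to show $\inf_{\mu\in M_F}\langle\mu,f\rangle\le f_{hom}(F)$, for then, combined with \eqref{cond2}, we obtain $\langle\nu,f\rangle<f_{hom}(F)\le\langle\nu,f\rangle$, the desired contradiction (the reverse inequality, giving equality, follows from the already-established necessity Theorem \ref{neccond} applied to a homogeneous measure with constant underlying deformation $F$, but is not needed here). To bound the infimum I would use the cell formula \eqref{f_hom} directly: for $\delta>0$ choose $T\in\N$ and $v_T\in L^p((0,T)^N,\R^d)$ with $\int_{(0,T)^N}v_T=0$ and $\frac{1}{T^N}\int_{(0,T)^N}f(\langle x\rangle,F+v_T(x))\,dx\le f_{hom}(F)+\delta$, extend $F+v_T$ to a $(0,T)^N$-periodic map, and for an arbitrary vanishing sequence $\{\varepsilon_n\}$ set $u_n(x):=F+v_T(x/\varepsilon_n)$ on $Q$ (working on $\Omega=Q$ is harmless by Remark \ref{remark3.3BBS}). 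For any $\varphi\in\mathcal C_0(\R^N\times\R^d)$ the function $\xi\mapsto\varphi(\langle\xi\rangle,F+v_T(\xi))$ is $(0,T)^N$-periodic and bounded, so by the standard weak$^*$ convergence of periodically oscillating functions to their average, for every $z\in L^1(Q)$,
\[
\lim_{n\to\infty}\int_Q z(x)\,\varphi\!\left(\Big\langle\tfrac{x}{\varepsilon_n}\Big\rangle,u_n(x)\right)dx=\int_Q z(x)\,dx\;\frac{1}{T^N}\int_{(0,T)^N}\varphi(\langle\xi\rangle,F+v_T(\xi))\,d\xi.
\]
Splitting $(0,T)^N=\bigcup_{k\in\{0,\dots,T-1\}^N}(k+Q)$ and using $\langle k+y\rangle=y$ rewrites the last average as $\int_Q\int_{\R^d}\varphi(y,\xi)\,d\mu^T_y(\xi)\,dy$ with $\mu^T_y:=\frac{1}{T^N}\sum_{k}\delta_{F+v_T(k+y)}$; hence $\{u_n\}$ generates the homogeneous two-scale Young measure $\mu^T$, and since $u_n\rightharpoonup F$ and $\int_Q\int_{\R^d}\xi\,d\mu^T_y(\xi)\,dy=F+\frac{1}{T^N}\int_{(0,T)^N}v_T=F$, we conclude $\mu^T\in M_F$.

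Evaluating the separating functional on this competitor gives exactly
\[
\int_Q\int_{\R^d}f(y,\xi)\,d\mu^T_y(\xi)\,dy=\frac{1}{T^N}\int_{(0,T)^N}f(\langle x\rangle,F+v_T(x))\,dx\le f_{hom}(F)+\delta,
\]
so $\inf_{\mu\in M_F}\langle\mu,f\rangle\le f_{hom}(F)+\delta$ for every $\delta>0$, which closes the argument; the upper growth bound $f(y,\xi)\le c(1+|\xi|^p)$ available for $f\in\mathcal E_p$ ensures $f_{hom}(F)<+\infty$ and the finiteness of all quantities above, so that no coercivity of $f$ is required. The step I expect to be the main obstacle is precisely the verification that the explicitly built $\mu^T$ is a genuine homogeneous two-scale Young measure in the sense of Definition \ref{def-two-scale} for an \emph{arbitrary} prescribed scale $\{\varepsilon_n\}$: one must handle the strong correlation between the slow variable of $u_n$ and the fast oscillation $\langle x/\varepsilon_n\rangle$ in the two-scale limit, and check $p$-equi-integrability so that the generated measure carries the finite $p$-th moment demanded by the definition. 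Everything else reduces to bookkeeping resting on Lemmas \ref{lemmf}, \ref{BBS3.4} and \ref{BBS_2.9}.
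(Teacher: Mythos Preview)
Your proof is correct and follows essentially the same route as the paper's: a Hahn--Banach separation in $(\mathcal E_p)'$ using Lemma~\ref{lemmf}, followed by the explicit construction of competitors $\mu^T\in M_F$ from periodic oscillations $u_n(x)=F+v_T(x/\varepsilon_n)$ via the Riemann--Lebesgue lemma, yielding $\inf_{\mu\in M_F}\langle\mu,f\rangle\le f_{hom}(F)$ and hence the contradiction with \eqref{cond2}. Your self-identified ``main obstacle'' is not actually one: the Riemann--Lebesgue argument you already sketched works for any prescribed $\{\varepsilon_n\}$ since $T\in\N$ makes the integrand $(0,T)^N$-periodic, and boundedness of $\{u_n\}$ in $L^p$ is immediate because $v_T$ is fixed; no $p$-equi-integrability is required by Definition~\ref{def-two-scale}.
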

	
	\begin{proof}
		Let $F \in \R^d$ and $\nu \in L^{\infty}_w(Q, \mathcal{M}(\R^d)$ be such that $\nu_y\in \mathcal{P}(\R^d)$ for a.e. $y \in Q$, and that \eqref{cond1}, \eqref{cond3}, and \eqref{cond2} hold. We will proceed by contradiction using the Hahn-Banach Separation Theorem. Assume that $\nu$ is not an element of $M_F$. By Lemma \ref{lemmf}, $M_F$ is a convex and weak* closed subset of $(\mathcal{E}_p)'$. Moreover, by \eqref{cond3} and the fact that $\{ \nu_y\}_{y \in Q}$ is a family of probability measures, we get that $\nu \in (\mathcal{E}_p)'$ as well. Since $\nu \notin M_F$, according to Hahn-Banach Separation Theorem, we can separate $\nu$ from $M_F$ i.e. there exists a linear weak* continuous map $L: (\mathcal{E}_p)' \rightarrow \R$ and $\alpha \in \R$ such that $\langle L , \nu \rangle_{(\mathcal{E}_p)'',(\mathcal{E}_p)'} < \alpha $ and  $\langle L , \mu \rangle_{(\mathcal{E}_p)'',(\mathcal{E}_p)'} \ge \alpha $ for all $\mu \in M_F$. Let $f \in \mathcal{E}_p$ be such that
		\begin{align}
			\label{align1}
			\alpha \le \langle L , \mu \rangle_{(\mathcal{E})_p'',(\mathcal{E}_p)'} = \langle \mu , f \rangle_{(\mathcal{E}_p)',\mathcal{E}_p}
			= \int_Q\int_{\R^d} f(y,\xi)d\mu_y(\xi)dy
		\end{align}
		for all $\mu \in M_F$ and 
		\begin{align}
			\label{align2}
			\alpha > \langle L , \nu \rangle_{(\mathcal{E})_p'',(\mathcal{E}_p)'} = \langle \nu , f \rangle_{(\mathcal{E})_p',\mathcal{E}_p} = \int_Q\int_{\R^d} f(y,\xi) d\nu_y(\xi)dy \ge f_{hom}(F).
		\end{align}
		
		Let $f_H$ be defined as
		\begin{equation*}
			f_H(F):= \inf_{\mu \in M_F} \int_Q \int_{\R^d} f(y,\xi) d\mu_y(\xi)dy,\,\,\,F \in \R^d.
		\end{equation*}
		Then by \eqref{align1}, we have that $\alpha \le f_H(F)$. We are going to show that 
		\begin{equation}
			\label{cont}
			f_H(F) \le f_{hom}(F),
		\end{equation}
		which is in contradiction with \eqref{align2} and proves the lemma. \\
		To prove \eqref{cont}, let $T \in \N$ and $\phi \in L^p((0,T)^N; \R^d)$ such that $\int_{(0,T)^N} \phi(x) dx = 0$. Extend $\phi$ to $\R^N$ by $(0,T)^N$-periodicity and consider the sequence 
		\[ \phi_n(x) := F +  \phi\left(\frac{x}{\varepsilon_n}\right),\]
		where $\{ \varepsilon_n\}$ is an arbitrary vanishing sequence. Let $\varphi \in \mathcal{C}_0(\R^N \times \R^d)$ and $z \in L^1(Q)$. Then, since $T \in \N$, the function $F \mapsto \varphi(\langle y \rangle, F + \phi(y))$ is $(0,T)^N$-periodic and according to the Riemann-Lebesgue Lemma, we get that
		
		\begin{align}
			\label{align3}
			\nonumber \lim_{n \rightarrow +\infty} \int_Q z(x) \varphi\left(\left\langle \frac{x}{\varepsilon_n}\right\rangle, \phi_n(x)\right)dx &= \lim_{n \rightarrow +\infty} \int_Q z(x) \varphi\left(\left\langle \frac{x}{\varepsilon_n}\right\rangle, F + \phi\left(\frac{x}{\varepsilon_n}\right)\right)dx\\
			&= \int_Q z(x) dx \fint_{(0,T)^N} \varphi(\langle y \rangle, F + \phi(y))dy.
		\end{align}

		Observe that
		\begin{align}
			\label{align4}
			&\fint_{(0,T)^N} \varphi(\langle y \rangle, F + \phi(y))dy =\frac{1}{T^N} \sum_{a_i \in \mathbb{Z}^N \cap [0,T)^N} \int_{a_i + Q}  \varphi(\langle y \rangle, F + \phi(y))dy \\
			\nonumber &= \frac{1}{T^N} \sum_{a_i \in \mathbb{Z}^N \cap [0,T)^N} \int_{Q}  \varphi(\langle a_i + y \rangle, F + \phi(a_i + y))dy = \frac{1}{T^N} \sum_{a_i \in \mathbb{Z}^N \cap [0,T)^N} \int_{Q}  \varphi(y, F +\phi(a_i + y))dy.
		\end{align}
		
		Thus, from \eqref{align3} and \eqref{align4}, the pair $\{ (\langle\cdot/ \varepsilon_n \rangle ; \phi_n)\}$ generates the homogeneous Young measure
		
		\[ \mu :=  \sum_{a_i \in \mathbb{Z}^N \cap [0,T)^N} \frac{1}{T^N} \delta_{F + \phi(a_i + y)} \otimes dy.\]
		Then,
		\[ \int_Q \int_{\R^d} \xi d\mu_y(\xi)dy = F,\]
		which implies that $\mu \in M_F$. In addition, 
		\[ \fint_{(0, T)^N} f(\langle y \rangle, F + \phi(y))dy = \int_Q \int_{\R^d} f(y, \xi) d\mu_y(\xi) dy,\]
		and then 
		\[ \fint_{(0,T)^N } f(\langle y \rangle, F + \phi(y))dy \ge f_H(F).\]
		As a consequence, taking the infimum over all $ \phi \in L^p((0,T)^N, \R^d)$ such that $\int_{(0,T)^N} \phi(x) dx = 0$ and the limit as $T \rightarrow +\infty $  we get that $f_H(F) \le f_{hom}(F)$ which implies \eqref{cont}.
	\end{proof}

	The following result is an unconstrained version of \cite[Proposition 3.6]{BBS2008}.  
	\begin{lemma}\label{asProp3.6BBS}
		Let $\nu \in L^\infty_w(\Omega \times Q;\mathcal M(\mathbb R^d))$ be such that the family
		$\{\nu_{(x,y)}\}_{(x,y)\in \Omega \times Q}$ is a two-scale Young measure. Then for a.e. $a \in \Omega$,
		$\{\nu_{(a,y)}\}_{y\in Q}$ is a homogeneous two-scale Young measure.
	\end{lemma}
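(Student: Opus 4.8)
The plan is to reduce the statement to the homogeneous characterization already established in Lemma~\ref{suffhom}, applied to each slice $\{\nu_{(a,y)}\}_{y\in Q}$ with underlying deformation $F=u(a)$. Since $\nu$ is a two-scale Young measure, Theorem~\ref{neccond} provides $u_1\in L^p(\Omega;L^p_{per}(Q,\R^d))$ and $u\in L^p(\Omega,\R^d)$ satisfying \eqref{cond11}, \eqref{3.21}, \eqref{cond21} and \eqref{cond31}. Setting $F_a:=u(a)$, I would verify that for a.e. $a\in\Omega$ the slice $\{\nu_{(a,y)}\}_{y\in Q}$ meets all the hypotheses \eqref{cond1}, \eqref{cond3}, \eqref{cond2} of Lemma~\ref{suffhom}; the conclusion then follows immediately.

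The conditions \eqref{cond1} and \eqref{cond3} are obtained by Fubini's theorem. Indeed, $\nu_{(x,y)}\in\mathcal P(\R^d)$ for a.e. $(x,y)$, so for a.e. $a$ we have $\nu_{(a,y)}\in\mathcal P(\R^d)$ for a.e. $y$; testing the weak* measurability of $\nu$ against a countable dense family in $\mathcal C_0(\R^d)$ shows that $y\mapsto\nu_{(a,y)}$ lies in $L^\infty_w(Q,\mathcal M(\R^d))$ for a.e. $a$. From \eqref{cond31} and Fubini, $y\mapsto\int_{\R^d}|\xi|^p d\nu_{(a,y)}(\xi)\in L^1(Q)$ for a.e. $a$, which is \eqref{cond3}; combined with \eqref{cond11} and \eqref{3.21} this gives $\int_Q\int_{\R^d}\xi\,d\nu_{(a,y)}(\xi)dy=\int_Q u_1(a,y)dy=u(a)=F_a$, i.e. \eqref{cond1}. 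All of this holds off a single null set.

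The only delicate point is \eqref{cond2}, namely $f_{hom}(F_a)\le\int_Q\int_{\R^d}f(y,\xi)d\nu_{(a,y)}(\xi)dy$ for \emph{every} $f\in\mathcal E_p$. Property \eqref{cond21} yields this for each fixed $f$, but only outside a null set $N_f$ depending on $f$, and $\mathcal E_p$ is uncountable. The main obstacle is therefore to produce one null set valid for all $f$ simultaneously. A naive density argument fails, because $f\mapsto f_{hom}(F_a)$ is an infimum of linear functionals (see \eqref{f_hom}), hence merely upper semicontinuous on $\mathcal E_p$, and may even equal $-\infty$ for non-coercive $f$. I would circumvent this by approximating \emph{from above}. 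Recall that $\mathcal E_p$ is isomorphic to the separable space $\mathcal C(\overline Q\times(\R^d\cup\{\infty\}))$; choosing a countable dense subset there and adding all positive rational constants produces a countable family $\mathcal D\subset\mathcal E_p$ with the property that every $f\in\mathcal E_p$ is an $\mathcal E_p$-limit of a sequence $\{f_j\}\subset\mathcal D$ with $f_j\ge f$ pointwise. Setting $N:=\bigcup_{g\in\mathcal D}N_g$, for a.e. $a\notin N$ inequality \eqref{cond21} holds for all $g\in\mathcal D$.

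Finally, fix such an $a$ and $f\in\mathcal E_p$, and pick $f_j\in\mathcal D$ with $f_j\ge f$ and $f_j\to f$ in $\mathcal E_p$. Monotonicity of the infimum in \eqref{f_hom} gives $f_{hom}(F_a)\le (f_j)_{hom}(F_a)$, while \eqref{cond21} applied to $f_j$ gives $(f_j)_{hom}(F_a)\le\int_Q\int_{\R^d}f_j\,d\nu_{(a,y)}(\xi)dy$. Since the slice has finite $p$-moment, the linear map $g\mapsto\int_Q\int_{\R^d}g\,d\nu_{(a,y)}(\xi)dy$ is $\mathcal E_p$-continuous, being bounded by $\|g\|_{\mathcal E_p}\int_Q\int_{\R^d}(1+|\xi|^p)d\nu_{(a,y)}(\xi)dy<+\infty$; letting $j\to\infty$ therefore yields $f_{hom}(F_a)\le\int_Q\int_{\R^d}f\,d\nu_{(a,y)}(\xi)dy$, which is \eqref{cond2}. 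The crucial point is that one uses \emph{monotonicity} of $f\mapsto f_{hom}$, not its continuity, to pass to the limit, since only the direction $f_j\ge f\Rightarrow(f_j)_{hom}\ge f_{hom}$ is needed. With \eqref{cond1}, \eqref{cond3} and \eqref{cond2} established off a single null set, Lemma~\ref{suffhom} shows that $\{\nu_{(a,y)}\}_{y\in Q}$ is a homogeneous two-scale Young measure for a.e. $a\in\Omega$.
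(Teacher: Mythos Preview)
Your approach is the same as the paper's: verify for a.e.\ $a\in\Omega$ that the slice $\{\nu_{(a,y)}\}_{y\in Q}$ satisfies the three hypotheses of Lemma~\ref{suffhom} with $F=u(a)$, and conclude. The paper simply declares ``Let $E\subset\Omega$ be a set of Lebesgue measure zero such that \eqref{3.21}, \eqref{cond21} and \eqref{3.22} do not hold'' and proceeds, without commenting on the quantifier order in \eqref{cond21}. You correctly identify that Theorem~\ref{neccond} is proved for each fixed $f$ (so the exceptional set a~priori depends on $f$) and supply the missing step via separability of $\mathcal E_p\cong\mathcal C(\overline Q\times(\R^d\cup\{\infty\}))$ together with a monotone approximation from above; the monotonicity $f_j\ge f\Rightarrow (f_j)_{hom}\ge f_{hom}$ is exactly what is needed since $f\mapsto f_{hom}$ is only upper semicontinuous. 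This extra care is warranted and the argument is correct. One small clarification: the ``rational constants'' you add must be constants in the $\mathcal C(\overline Q\times(\R^d\cup\{\infty\}))$ picture, which under the isomorphism correspond to functions $q(1+|\xi|^p)$ in $\mathcal E_p$; a genuine additive constant in $\mathcal E_p$ would not suffice to dominate $f$, since $|g_n-f|$ is only controlled by $\varepsilon_n(1+|\xi|^p)$. With that reading, your construction of $\mathcal D$ and the subsequent limit are fine.
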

	\begin{proof}[Proof]
		
		Since $\{\nu_{(x,y)}\}_{(x,y)\in \Omega \times Q}$ is a two-scale Young measure, it satisfies \eqref{cond11},\eqref{cond21} and \eqref{cond31}   Since $u_1(x, \cdot)$ is $Q$-periodic
		for a.e. $x \in \Omega$, integrating \eqref{cond11} 
		we get \eqref{3.21}.
		Furthermore, \eqref{cond31} implies that
		\begin{equation}\label{3.22}\int_Q \int_{\mathbb R^d}|\xi|^p d{\nu_{(x,y)}}dy <+\infty \hbox{ for a.e. }x \in \Omega.
		\end{equation}
		
		Let $E \subset \Omega$  be a set of Lebesgue measure zero such that \eqref{3.21}, \eqref{cond21} and \eqref{3.22} do
		not hold. Then for every $a \in \Omega \setminus E$,
		\[\int_Q \int_{\mathbb R^d} \xi d\nu_{(a,y)}(\xi)d y= u(a),\]
		\[\int_Q \int_{\mathbb R^d}|\xi|^p d \nu_{(a,y)}(\xi)d y < +\infty,\]
		and
		\[\int_Q \int_{\mathbb R^d}f(y,\xi)d \nu_{(a,y)}(\xi)dy \geq f_{hom}(u(a))\]
		for every $f \in \mathcal E_p$.
		As a consequence of Lemma \ref{suffhom}, for every $a \in \Omega \setminus E$, the family $\{\nu_{(a,y)}\}_{y \in Q}$ is a
		homogeneous two-scale  Young measure.
	\end{proof}

	\begin{theorem}\label{necThmmain}
		Let $\Omega$ be a bounded and open subset of $\R^d$. Let $\nu \in L^{\infty}_w( \Omega \times Q, \M(\R^d))$ be such that $\nu_{(x,y)} \in \mathcal{P}(\R^d)$ for a. e. $(x,y) \in \Omega \times Q$. Assume that i)-iii) of Theorem \ref{mainresult} hold, i.e. \begin{equation}
			\label{cond30}
			\int_{\R^d} |\xi|^p d\nu_{(x,y)}(\xi)  \in L^1(\Omega \times Q), 
		\end{equation}
		there exist $u_1\in L^p(\Omega;( L^p_{per}(Q;\mathbb R^d)))$ and $u\in L^p(\Omega;\mathbb R^d)$  such that \begin{equation}
			\label{cond10}
			u_1(x,y)=\int_{\R^d}\xi d\nu_{(x,y)}(\xi),
		\end{equation}
		with $\int_Q u_1(x,y)dy=u(x)$. Assume also that
		\begin{equation}
			\label{cond20}
			f_{hom}(u(x)) \le \int_Q\int_{\R^d} f(y,\xi)d\nu_{(x,y)}(\xi)dy,\text{ for a.e. } x \in \Omega, \text{ for  any } f \in \mathcal{E}_p, 
		\end{equation}
		where $f_{hom}(F)$ is the function in \eqref{f_hom}
		Then $\{ \nu_{(x,y)} \}_{(x,y) \in \Omega \times Q}$ is a  two-scale Young measure with underlying deformation $u$.
	\end{theorem}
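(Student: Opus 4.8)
The plan is to reduce the general statement to the homogeneous case already established in Lemma \ref{suffhom}, by approximating $\nu$ with families that are piecewise constant in the slow variable $x$, and then to recover a generating sequence for $\nu$ by a diagonal argument. First I would pass to slices. By \eqref{cond30} we have $\int_Q\int_{\R^d}|\xi|^p d\nu_{(x,y)}(\xi)dy<+\infty$ for a.e. $x\in\Omega$; by \eqref{cond10}, $\int_Q\int_{\R^d}\xi\, d\nu_{(x,y)}(\xi)dy=\int_Q u_1(x,y)dy=u(x)$; and by \eqref{cond20}, $\int_Q\int_{\R^d} f(y,\xi)d\nu_{(x,y)}(\xi)dy\ge f_{hom}(u(x))$ for every $f\in\mathcal E_p$. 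Hence, for a.e. $a\in\Omega$, the slice $\{\nu_{(a,y)}\}_{y\in Q}$ satisfies the hypotheses \eqref{cond1}, \eqref{cond3} and \eqref{cond2} of Lemma \ref{suffhom} with $F=u(a)$, so it is a homogeneous two-scale Young measure with underlying deformation $u(a)$.

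Next I would build the piecewise constant approximations. Fix a countable family $\{\varphi_j\}\subset\mathcal{C}_0(\R^N\times\R^d)$ dense in the sup norm, together with the density $g(x):=\int_Q\int_{\R^d}|\xi|^p d\nu_{(x,y)}(\xi)dy\in L^1(\Omega)$. For each $k$ I would partition $\Omega$, up to a null set, into finitely many cubes $Q_i^k$ of side $1/k$ and choose base points $a_i^k$ that are simultaneously Lebesgue points of $g$ and of every $x\mapsto\int_Q\int_{\R^d}\varphi_j(y,\xi)d\nu_{(x,y)}(\xi)dy$; then set $\nu^k_{(x,y)}:=\nu_{(a_i^k,y)}$ for $x\in Q_i^k$. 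On each cube the slice $\{\nu_{(a_i^k,y)}\}_y$ is a homogeneous two-scale Young measure, and by the domain independence of Remark \ref{remark3.3BBS} it is generated on $Q_i^k$, along any prescribed vanishing sequence $\{\varepsilon_n\}$, by a sequence $\{v^{i}_{n}\}$, which by Remark \ref{narrow} iii) may be taken with $p$-th powers equi-integrable. Setting $u^k_n:=v^{i}_{n}$ on $Q_i^k$ and summing over the finitely many cubes, the homogeneous generating property applied cube by cube shows that $\{(\langle\cdot/\varepsilon_n\rangle,u^k_n)\}$ generates $\nu^k$; hence $\nu^k$ is a two-scale Young measure by Corollary \ref{corollary}. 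Applying Theorem \ref{theoremBarchiesi} (ii) to the continuous integrand $|\cdot|^p$ on each cube yields $\lim_n\int_\Omega|u^k_n|^p dx=\sum_i|Q_i^k|\,g(a_i^k)$, a Riemann-type sum of the $L^1$ function $g$ that stays bounded uniformly in $k$; so the sequences $\{u^k_n\}$ are bounded in $L^p(\Omega)$ uniformly in $k$.

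Finally I would let $k\to\infty$ and diagonalize. By the Lebesgue-point choice, the piecewise constant functions $x\mapsto\int_Q\int_{\R^d}\varphi_j\,d\nu^k_{(x,y)}dy$ converge in $L^1(\Omega)$ to $x\mapsto\int_Q\int_{\R^d}\varphi_j\,d\nu_{(x,y)}dy$, and by density this gives $\int_\Omega z\int_Q\int_{\R^d}\varphi\,d\nu^k dy dx\to\int_\Omega z\int_Q\int_{\R^d}\varphi\,d\nu\,dy dx$ for all $z\in L^1(\Omega)$ and $\varphi\in\mathcal{C}_0(\R^N\times\R^d)$, that is, $\nu^k\to\nu$ weakly$^*$ in $L^\infty_w(\Omega\times Q,\M(\R^d))$. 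Since the relevant predual is separable, both this convergence and each generating limit as $n\to\infty$ are metrizable on the uniformly bounded set at hand, and a standard diagonal selection produces indices $k=k(n)$ so that $u_n:=u^{k(n)}_n$ is bounded in $L^p(\Omega)$ and $\{(\langle\cdot/\varepsilon_n\rangle,u_n)\}$ generates $\nu$ along the originally fixed sequence $\{\varepsilon_n\}$. As $\{\varepsilon_n\}$ was arbitrary, $\nu$ is a two-scale Young measure, and its underlying deformation is $\int_Q\int_{\R^d}\xi\,d\nu_{(x,y)}(\xi)dy=u$ by Proposition \ref{Pedregal2006-2.3} and Remark \ref{remarkud}.

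The main obstacle is this last assembling step: one must exhibit a single sequence generating $\nu$ for the \emph{prescribed} $\{\varepsilon_n\}$, not merely a subsequence, which forces the double use of metrizability coupled with the uniform-in-$k$ $L^p$ bound coming from \eqref{cond30}. The other delicate point is the Lebesgue-point selection, which must guarantee $\nu^k\to\nu$ simultaneously for a dense set of test functions and for the density $g$, so that both the weak$^*$ convergence and the uniform moment bound survive the passage to the limit.
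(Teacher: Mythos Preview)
Your proposal is correct and follows essentially the same strategy as the paper: pass to slices via Lemma \ref{suffhom}, approximate $\nu$ by piecewise constant two-scale Young measures, generate each piece, and diagonalize along a prescribed $\{\varepsilon_n\}$. The differences are only technical: the paper first reduces to the case $u=0$ by translating $\nu$ (defining $\tilde\nu$ with barycenter shifted by $-u(x)$) and then uses a Vitali-type covering of $\bar\Omega$ by scaled copies $a_i^k+\rho_i^k\bar\Omega$ (via \cite[Lemma 7.9]{Pedregalbook}) rather than dyadic cubes; you instead handle general $u$ directly and are more explicit than the paper about the uniform-in-$k$ $L^p$ bound needed for the diagonalization, extracting it from the density $g\in L^1(\Omega)$ and Theorem \ref{theoremBarchiesi}(ii). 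Both routes work; the paper's translation to $u=0$ slightly streamlines the piecewise construction (each homogeneous slice then has zero underlying deformation), while your version makes the moment control more transparent.
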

	
	\begin{proof}
		Let $u=0$ a.e. on $\Omega$ and Let $(\varphi, z)$ be in a countable dense subset of $\mathcal{C}_0(\R^N \times \R^d) \times L^1(\Omega)$. Define
		\[ \bar{\varphi}(x):= \int_Q\int_{\R^{d}} \varphi(y, \xi)d\nu_{(x,y)}(\xi)dy.\]
		Since the average with respect to $y$ of $u_1(x, y)$ is $u(x)$ for a.e. $x \in \Omega$, integrating \eqref{cond10} with respect to $y \in Q$, it follows that
		\begin{equation}
			\label{cond1*}
			\int_Q\int_{\R^{d}} \xi d\nu_{(x,y)}(\xi)dy = 0,\,\,\,\,\text{for a.e.  }x \in \Omega.
		\end{equation}
		Let $E \subset \Omega$ be the null Lebesgue measure set such that \eqref{cond1*}, \eqref{cond20} or \eqref{cond30} do not hold. Then for every $a \in \Omega \setminus E$
		\[ \int_Q\int_{\R^{d}} \xi d\nu_{(a,y)}(\xi)dy =  0,\]
		\[ f_{hom}(0) \le \int_Q\int_{\R^d} f(y,\xi)d\nu_{(a,y)}(\xi)dy\]
		for every $f \in \mathcal{E}_p$,
		and
		\[\int_Q \int_{\R^d} |\xi|^p d\nu_{(a,y)}(\xi)dy < +\infty.\]
		
		Now, let $k \in \N$. According to \cite[Lemma 7.9, pg. 129]{Pedregalbook}, there exist points $a^k_i \in \Omega \setminus E$ and positive numbers $\rho^k_i \le 1/k$ such that $a^i_k + \rho^k_i \Omega$ are pairwise disjoint for each $k$, 
		\[ \bar{\Omega} = \bigcup_{i \ge 1} (a^k_i + \rho^k_i\bar{\Omega}) \cup E_k, \,\text{ with } \mathcal{L}(E_k)=0\]
		and 
		\begin{equation}
			\label{eq1.1}
			\int_{\Omega} z(x)\bar{\varphi}(x)dx = \lim_{k \rightarrow +\infty} \sum_{i \ge 1}\bar{\varphi}(a^k_i) \int_{a^k_i + \rho^k_i \Omega} z(x)dx.
		\end{equation}
		For each $k \in \N$, let $m_k \in \N$ be large enough so that
		\begin{equation}
			\label{eq2}
			\left| \sum_{i = 1}^{m_k} \bar{\varphi}(a^k_i)\int_{a^k_i+\rho^k_i \Omega} z(x)dx - \sum_{i \ge 1}\bar{\varphi}(a^k_i)\int_{a^k_i + \rho^k_i \Omega} z(x)dx\right| < \frac{1}{k}.
		\end{equation}
		For fixed $i$ and $k$, by choice of $a^k_i$ and by Lemma \ref{asProp3.6BBS}, 
		the family $\{\nu_{(a^k_i, y)}\}_{y \in Q}$ is a homogeneous two-scale Young measure. Hence, by Remark \ref{remark3.3BBS}, for every vanishing sequence $\{\varepsilon_n\}$, there exist sequences $\{ u^{i,k}_n\} \subset L^p(a^k_i + \rho^k_i \Omega, \R^d)$ such that
		\[ \lim_{n \rightarrow + \infty} \int_{a^k_i + \rho^k_i \Omega} z(x)\varphi\left( \left\langle \frac{x}{\varepsilon_n}\right\rangle,  u^{i,k}_n(x)\right) dx = \bar{\varphi}(a^k_i) \int_{a^k_i + \rho^k_i \Omega } z(x)dx.\]
		Summing up
		\begin{align*}
			\lim_{n \rightarrow +\infty} \sum_{i = 1}^{m_k} \int_{a^k_i + \rho^k_i \Omega} z(x) \varphi\left( \left\langle \frac{x}{\varepsilon_n} \right\rangle, u^{i,k}_n\right)dx = \sum_{i = 1}^{m_k} \bar{\varphi}(a^k_i)\int_{a^k_i + \rho^k_i \Omega}z(x)dx.
		\end{align*}
		Let us define
		\[ u^k_n(x) := \begin{cases}
			u^{i,k}_n(x) \,\,\,\,\text{ if } x \in a^k_i + \rho^k_i \Omega, \\ 0 \,\,\,\, \text{ otherwise }
		\end{cases}\]
		and remark that $u^k_n \in L^p(\Omega, \R^d)$.  
		Since the sets $a^k_i + \rho^k_i \Omega$ are pairwise disjoint for each $k$ we have that 
		\begin{align*}
			\int_{\Omega} z(x)\varphi \left( \left \langle \frac{x}{\varepsilon_n}\right\rangle,  u^k_n(x) \right)dx   &= \sum_{i \ge 1} \int_{a^k_i + \rho^k_i\Omega} z(x)\varphi\left( \left\langle \frac{x}{\varepsilon_n}\right\rangle,  u^{i,k}_n(x)\right) dx \\  = \sum_{i = 1}^{m_k} \int_{a^k_i + \rho^k_i \Omega} z(x)\varphi\left(\left\langle \frac{x}{\varepsilon_n}\right\rangle,  u^{i,k}_n(x) \right)dx &+ \int_{\Omega \cap \bigcup_{i > m_k}(a^k_i + \rho^k_i \Omega)} z(x) \varphi\left(\left\langle \frac{x}{\varepsilon_n}, u^k_n(x)\right\rangle\right) dx.
		\end{align*}
		But as $z \in L^1(\Omega)$ and $\mathcal{L}^N(\Omega \cap \bigcup_{i > m_k}(a^k_i + \rho^k_i\Omega)) \rightarrow 0$, as $k \rightarrow +\infty$, it follows that
		\begin{equation}
			\label{eq5}
			\lim_{k \rightarrow +\infty} \lim_{n \rightarrow +\infty} \left | \int_{\Omega \cap \bigcup_{i > m_k}(a^k_i + \rho^k_i \Omega)} z(x)\varphi\left( \left\langle \frac{x}{\varepsilon_n}\right\rangle,  u^k_n(x)\right) dx \right| = 0.
		\end{equation}
		Then, gathering (\eqref{eq1.1} - \eqref{eq5}) we obtain that
		\begin{align*}
			&\lim_{k \rightarrow + \infty}\lim_{n \rightarrow + \infty} \int_{\Omega} z(x) \varphi\left( \left\langle \frac{x}{\varepsilon_n}\right\rangle,  u^k_n(x)\right) dx \\& = \lim_{k \rightarrow +\infty} \lim_{n \rightarrow +\infty} \sum_{i = 1}^{m_k} \int_{a^k_i + \rho^k_i \Omega} z(x)\varphi\left( \left\langle \frac{x}{\varepsilon_n}\right\rangle,  u^k_n(x)\right) dx \\ &= \lim_{k \rightarrow + \infty} \sum_{i=1}^{m_k}\bar{\varphi}(a^k_i) \int_{a^k_i + \rho^k_i \Omega} z(x)dx \\ &= \lim_{k \rightarrow +\infty} \sum_{i \ge 1}\bar{\varphi}(a^k_i) \int_{a^k_i + \rho^k_i \Omega} z(x)dx \\ &= \int_{\Omega} z(x) \bar{\varphi}(x) dx.
		\end{align*}
		
		A diagonalization argument implies the existence of a diverging sequence $\{ k_n\} $, as $n \rightarrow \infty$, such that upon setting $u_n := u^{k_n}_n$, then 
		\[ \lim_{n \rightarrow +\infty} \int_{\Omega} z(x) \varphi\left( \left\langle \frac{x}{\varepsilon_n}\right\rangle,  u_n(x)\right) dx = \int_{\Omega} z(x) \bar{\varphi}(x)dx\]
		and $u_n \rightharpoonup 0 $ in $L^p(\Omega, \R^d)$, which completes the proof whenever $u=0$.\\
		Consider now a general $u \in L^p(\Omega, \R^d)$ and $\nu$ satisfying properties \eqref{cond30}-\eqref{cond20}. We define $\tilde{\nu} \in L^{\infty}_w(\Omega \times Q, \mathcal{M}(\R^d))$ by
		\begin{equation}
			\label{nutilde}
			\langle \tilde{\nu}, \psi \rangle := \int_\Omega \int_Q \int_{\R^d} \psi(x, y, \xi -  u(x)) d\nu_{(x,y)}(\xi)dydx,
		\end{equation}
		for every $\psi \in L^1(\Omega \times Q, \mathcal{C}_0(\R^d))$.
		We can easily check that $\tilde{\nu}$ satisfies the analog properties of \eqref{cond30}-\eqref{cond20} with $u=0$.  Hence, applying the first step of the proof, for vanishing every sequence $\{\varepsilon_n\}$ we may find a sequence $\{ \tilde{u}_n\} \subset L^p(\Omega, \R^d)$ such that $\{ (\langle \cdot / \varepsilon_n\rangle , {\tilde u}_n)\}$ generates the Young measure $\{ \tilde{\nu}_{(x,y)} \otimes dy\}_{x \in \Omega}$. Define $u_n:= \tilde u_n +u$. It is easily seen that $\{ (\langle \cdot / \varepsilon_n\rangle , u_n)\}$ generates the Young measure $\{ \nu_{(x,y)} \otimes dy\}_{x \in \Omega}$. Indeed, let $\psi \in L^1(\Omega, \mathcal{C}_0(\R^N \times \R^d))$ and define $\tilde{\psi}(x,y,\xi) := \psi(x,y,\xi +  u(x))$ where $\tilde{\psi} \in L^1(\Omega, \mathcal{C}_0(\R^N \times \R^d))$ as well. Then, by \eqref{nutilde}
		\begin{align*}
			& \lim_{n \rightarrow +\infty} \int_{\Omega} \psi\left( x, \left\langle \frac{x}{\varepsilon_n}\right\rangle,  u_n(x) \right) dx = \lim_{n \rightarrow +\infty} \int_{\Omega} \tilde{\psi}\left( x, \left\langle \frac{x}{\varepsilon_n}\right\rangle,  \tilde{u}_n(x) \right) dx \\ &= \int_{\Omega} \int_Q \int_{\R^d} \tilde{\psi}(x,y,\xi) d\tilde{\nu}_{(x,y)}(\xi)dydx = \int_{\Omega} \int_Q \int_{\R^d} \psi(x,y,\xi) d\nu_{(x,y)}(\xi)dydx 
		\end{align*}
		which completes the proof.
	\end{proof}

	
	\section{Homogenization}\label{MR}

	This section is devoted to the proof of Theorem \ref{hom}. To this end we start by showing  that two-scale Young measures behave as measure products when generated by couples of sequences. This fact is a fundamental tool for achieving our result, and it represents an extension of the pioneering \cite[Proposition 2.3]{Pedregal1997}.

	\begin{proposition}
		\label{Pedregal1997}
		Let $\Lambda = \{ \Lambda_{(x, x, y, y')}\}_{(x, x', y, y')\in\Omega^2\times Q^2} \subset L^\infty_w(\Omega^2 \times Q^2;\mathcal M(\mathbb R^d \times \mathbb R^d))$  be a family of probability measures supported on $\R^d \times \R^d$. $\Lambda$ is the two-scale Young measure associated with a sequence $\{(u_n(x), u_n(x'))\}$, with $\{ u_n \}\subset L^{p}(\Omega, \R^d)$ if and only if $\Lambda_{(x, x', y, y')} = \nu_{(x,y)} \otimes \nu_{(x',y')}$, where $\nu:=\{\nu_{(x,y)}\}_{(x,y)\in \Omega \times Q}\subset L^\infty_w(\Omega \times Q;\mathcal M(\mathbb R^d))$ is the two-scale Young measure associated (in the sense of  Definition \ref{def-two-scale}) with the sequence $\{ u_n\}$.  
	\end{proposition}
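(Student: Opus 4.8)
The plan is to reduce the whole statement to the single identification $\Lambda_{(x,x',y,y')}=\nu_{(x,y)}\otimes\nu_{(x',y')}$, after which the ``if and only if'' follows at once from the uniqueness of the two-scale Young measure generated by a (sub)sequence. Fix the vanishing sequence $\{\varepsilon_n\}$ and a sequence $\{u_n\}$ generating $\nu$. First note that the doubled sequence is bounded in $L^p(\Omega^2,\R^{2d})$, since $\int_{\Omega^2}\big(|u_n(x)|^p+|u_n(x')|^p\big)\,dx\,dx'=2\,\mathcal L^N(\Omega)\,\|u_n\|_{L^p(\Omega)}^p$, which is bounded as $\{u_n\}$ is. Hence Theorem \ref{theoremBarchiesi}, applied with $R=\Omega^2$, $S=Q\times Q$ and oscillation $\langle (x,x')/\varepsilon_n\rangle=(\langle x/\varepsilon_n\rangle,\langle x'/\varepsilon_n\rangle)$, produces, up to a non-relabelled subsequence, a two-scale Young measure $\Lambda$ of probability measures on $\R^d\times\R^d$; it remains to identify it.

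To do so I would test against product functions $z(x,x')=z_1(x)z_2(x')$ and $\varphi(y,y',\xi,\xi')=\varphi_1(y,\xi)\varphi_2(y',\xi')$, with $z_1,z_2\in L^1(\Omega)$ and $\varphi_1,\varphi_2\in\mathcal C_0(\R^N\times\R^d)$. By Fubini the double integral factorizes as $A_nB_n$, where $A_n:=\int_\Omega z_1(x)\varphi_1(\langle x/\varepsilon_n\rangle,u_n(x))\,dx$ and $B_n$ is the analogous integral in the variable $x'$ built from $z_2,\varphi_2$. Applying Definition \ref{def-two-scale} to the generating sequence $\{u_n\}$ of $\nu$ gives $A_n\to\int_\Omega z_1(x)\int_Q\int_{\R^d}\varphi_1(y,\xi)\,d\nu_{(x,y)}(\xi)\,dy\,dx$ and likewise for $B_n$; passing to the limit in the product $A_nB_n$ and regrouping the two factors by Fubini identifies the limit as the integral of $z_1z_2\varphi_1\varphi_2$ against $\nu_{(x,y)}\otimes\nu_{(x',y')}$ over $\Omega^2\times Q^2\times\R^{2d}$. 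Thus the generation formula holds for all product test functions.

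The main obstacle is to promote this from product test functions to arbitrary ones. By Stone--Weierstrass, the algebra generated by the products $\varphi_1\otimes\varphi_2$ is dense in $\mathcal C_0(\R^N\times\R^N\times\R^d\times\R^d)$ for the uniform norm, while finite sums $z_1\otimes z_2$ are dense in $L^1(\Omega^2)$; since each $\varphi_j$ is bounded and the measures $\nu_{(x,y)}$ are probabilities, the error committed in replacing a general pair $(z,\varphi)$ by a product approximation is uniform in $n$, so a standard three-term estimate extends the identity to every $z\in L^1(\Omega^2)$ and $\varphi\in\mathcal C_0$. This shows that $\{(u_n(x),u_n(x'))\}$ generates $\nu_{(x,y)}\otimes\nu_{(x',y')}$, which forces the measure $\Lambda$ obtained above to equal $\nu\otimes\nu$. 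The equivalence is then immediate: if $\Lambda=\nu\otimes\nu$ the computation shows it is the two-scale Young measure of the doubled sequence, while conversely any two-scale Young measure associated with $\{(u_n(x),u_n(x'))\}$ must coincide with $\nu\otimes\nu$ by uniqueness of generated measures.
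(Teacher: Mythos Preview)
Your argument is correct and follows essentially the same route as the paper: test against tensor products $z_1(x)z_2(x')\,\varphi_1(y,\xi)\varphi_2(y',\xi')$, factor the double integral via Fubini, pass to the limit in each factor using that $\{u_n\}$ generates $\nu$, and then invoke density of such products in $L^1(\Omega^2;\mathcal C_0(\R^N\times\R^d\times\R^N\times\R^d))$ to identify $\Lambda_{(x,x',y,y')}=\nu_{(x,y)}\otimes\nu_{(x',y')}$. The paper's proof is terser (it records the density parenthetically and dispatches the converse with ``can be proved in a similar way''), while you spell out the $L^p$-boundedness of the doubled sequence, the Stone--Weierstrass step, and the uniqueness argument; but the substance is the same.
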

	
	\begin{proof}
		Let $\{\e_n\}$ be any vanishing sequence and let $\Lambda$ be the two-scale Young measure associated to the sequence $\{( u_n(x), u_n(x'))\}$ for some bounded sequence $\{ u_n \} \subset L^{p}(\Omega, \R^d)$. We test $\Lambda$ against functions $\theta(x,x')\varphi(y,\xi,y',\xi')$ with $\theta\in L^1(\Omega\times\Omega)$ and $\varphi\in C_0(\R^N\times\R^d\times\R^N\times\R^d)$, (in view of the density of linear combinations of such functions in $L^1(\Omega\times \Omega; C_0(\mathbb R^N \times \mathbb R^d\times \mathbb R^N\times \mathbb R^d)$). In particular, we choose $\varphi$  such that $\varphi(y,\xi,y',\xi')=\varphi_1(y,\xi)\varphi_2(y',\xi')$ and $\theta(x,x')=\theta_1(x)\theta_2(x')$.
		
		\begin{align*}
			&\iint_{\Omega\times\Omega}\theta_1(x)\theta_2(x')\left(\iint_{Q\times Q}\iint_{\R^d\times\R^d} \varphi_1(y', \xi')\varphi_2(y', \xi') d\Lambda_{(x, x', y, y')}(\xi,\xi')dydy'
			\right)dxdx'\\
			&=\lim_{n\to +\infty}\iint_{\Omega\times\Omega}\theta(x,x') \varphi\left(\left\langle \frac{x}{\e_n} \right\rangle, \left\langle \frac{x'}{\e_n} \right\rangle, u_n(x), u_n(x') \right) dxdx'\\
			& =\lim_{n\to +\infty}\iint_{\Omega\times\Omega}\theta_1(x)\theta_2(x') \varphi_1\left(\left\langle \frac{x}{\e_n} \right\rangle, u_n(x) \right)\varphi_2\left(\left\langle \frac{x'}{\e_n} \right\rangle, u_n(x') \right) dxdx'\\
			&=\lim_{n\to +\infty} \left( \int_\Omega \theta_1(x)\varphi_1\left(\left\langle \frac{x}{\e_n} \right\rangle, u_n(x)\right)dx \right) \left( \int_\Omega \theta_2(x')\varphi_2\left(\left\langle \frac{x'}{\e_n} \right\rangle, u_n(x')\right)dx' \right)\\
			&= \int_{\Omega} \theta_1(x) \left( \int_Q \int_{\R^d} \varphi(y, \xi) d\nu_{(x,y)}(\xi)dy\right)dx  \int_{\Omega} \theta_2(x') \left( \int_Q \int_{\R^d} \varphi(y', \xi') d\nu_{(x',y')}(\xi')dy'\right) dx'\\
			&= \iint_{\Omega \times \Omega} \theta_1(x)\theta_2(x') \left(\iint_{Q\times Q}\iint_{\R^d\times\R^d} \varphi_1(y, \xi)\varphi_2(y', \xi') d(\nu_{(x,y)} \otimes \nu_{(x',y')})(\xi,\xi')dydy'	\right)dx dx'
		\end{align*}
		
		The fourth equality is due to the boundedness of the sequence $\{ u_n\}$, and the fact that $\left\{\left(\langle \frac{x}{\varepsilon_n}\rangle, u_n(x) \right)\right\}$ generates the two-scale Young measure $\nu$. The thesis follows from the chain of identities. The reverse implication can be proved in a similar way.
	\end{proof}
	Now, we prove Theorem \ref{hom}. This result is a generalization of \cite[Theorem 1.2]{BBS2008}. We observe also that we will refer to the $\Gamma$-convergence as it has been introduced in Definition \ref{defGammafamily}, since by the growth assumption \eqref{pgrowth},  we can work in bounded subsets of $L^p(\Omega;\mathbb R^m)$, which are metrizable.

	\begin{proof}[Proof of Theorem \ref{hom}]
		We recall the family of functionals $I_{\varepsilon}: L^p(\Omega, \R^d) to [0, +\infty)$ defined in \eqref{functI} as
		\begin{equation*}
			I_{\varepsilon}(u):= \int_{\Omega} \int_{\Omega} W\left(x, x', \Big\langle \frac{x}{\varepsilon}\Big\rangle,\Big\langle \frac{x'}{\varepsilon}\Big\rangle,u(x), u(x')\right) dx dx' 
		\end{equation*} 
		where $\Omega \subset \R^N$, $u\in L^p(\Omega;\R^d)$ and $W: \Omega \times \Omega \times Q \times Q \times \R^d \times \R^d \to [0,+\infty)$ is a symmetric admissible integrand, satisfying \eqref{pgrowth}.
		
		\noindent
		Let $u \in L^p(\Omega, \R^d)$ and let $\{\varepsilon_n\} $ be such that $\varepsilon_n \rightarrow 0$. We start by showing that 
		\begin{align}
			\label{gammalimsup}
			&\Gamma-\limsup_{n \rightarrow +\infty}I_{\varepsilon_n}(u)\\
			&\le \inf_{\nu \in \mathcal{M}_u} \iint_{\Omega \times \Omega} \iint_{Q\times Q}\iint_{\R^d \times \R^d} W(x, x',y, y',\xi, \xi')d\nu_{(x,y)}(\xi)d\nu_{(x',y')}(\xi')dydy'dxdx'
		\end{align}
		where $\mathcal{M}_u$ is defined in \eqref{Mu}.  Let $\nu \in \mathcal{M}_u$, by Remark \ref{remarkud}, Theorem \ref{mainresult} and Proposition \ref{Pedregal1997}, there exists a sequence $\{ u_n\} \subset L^p(\Omega, \R^d)$ such that $ u_n \rightharpoonup u $ in $L^p(\Omega, \R^d)$, generates the two-scale Young measure $\{\nu_{(x,y)} \}_{(x,y) \in \Omega\times Q}$, and clearly$\{ (u_n(x), u_n(x') )\}_n$ is the sequence associated to the two-scale Young measure $\{ \nu_{(x,y)}\otimes\nu_{(x',y')} \}_{(x,x', y,y') \in \Omega^2\times Q^2}$. Extract a subsequence $\{ \varepsilon_{n_k} \} \subset \{ \varepsilon_n\}$ such that 
		\[ \limsup_{n \rightarrow \infty} I_{\varepsilon_n}(u_n) = \lim_{k \rightarrow \infty} I_{\varepsilon_{n_k}}(u_{n_k})\]
		and that $\{ |u_{n_k}|^p\}$ is equi-integrable, 
		which is always possible by the Decomposition Lemma \cite[Lemma 8.13]{FLbook}. In particular, due to the $p$-growth condition \eqref{pgrowth}, it follows that the sequence $\{ W(\cdot, \cdot', \langle \cdot/ \varepsilon_{n_k} \rangle, \langle \cdot'/ \varepsilon_{n_k} \rangle, u_{n_k}(\cdot), u_{n_k}(\cdot '))\}$ is equi-integrable in $\Omega \times \Omega$ as well,  hence applying  Theorem \ref{theoremBarchiesi}-$ii)$ we get that
		\begin{align}
			\label{gammalimsup2}
			& \nonumber \Gamma-\limsup_{n \rightarrow +\infty} I_{\varepsilon_n}(u) \le \lim_{k \rightarrow + \infty} \iint_{\Omega \times \Omega} W\left(x, x', \left\langle \frac{x}{\varepsilon_{n_k}}\right\rangle, \left\langle \frac{x'}{\varepsilon_{n_k}}\right\rangle, u_{n_k}(x), u_{n_k}(x')\right)dx dx' \\ &= \iint_{\Omega \times \Omega} \iint_{Q \times Q} \iint_{\R^d \times \R^d} W\left(x, x', y, y', \xi, \xi'\right) d\nu_{(x,y)}(\xi)d\nu_{(x',y')}(\xi')dydy'dxdx'.
		\end{align}
		Taking the infimum over all $\nu \in \mathcal{M}_u$ in the right hand side of \eqref{gammalimsup2}, yields \eqref{gammalimsup}.
		
		Let us prove now that
		\begin{align}
			\label{gammaliminf}
			&\Gamma-\liminf_{n \rightarrow +\infty} I_{\varepsilon_n}(u)\\
			&\ge \inf_{\nu \in \mathcal{M}_u} \iint_{\Omega \times \Omega} \iint_{Q\times Q} \iint_{\R^d \times \R^d} W(x, x', y, y',\xi, \xi') d\nu_{(x,y)}(\xi) d\nu_{(x',y')}(\xi') dy dy' dx dx'. \nonumber
		\end{align} 
		Let $\eta > 0$ and $\{ u_n \} \subset L^p(\Omega, \R^d)$ such that $u_n \rightharpoonup u$ in $L^p(\Omega,\R^d)$ and 
		\begin{equation}
			\label{eq2}
			\liminf_{n \rightarrow +\infty}I_{\varepsilon_n} (u) \le \Gamma-\liminf_{n \rightarrow +\infty}I_{\varepsilon_n}(u) + \eta.
		\end{equation}
		
		For a subsequence $\{ n_k\}$, still thanks to Proposition \ref{Pedregal1997}, we can assume that there exists $\nu \in L^{\infty}_w(\Omega \times Q, \mathcal{M}(\R^d))$ such that $\{ (u_n(x), u_n(x') )\}$ is the sequence associated to the two-scale Young measure $\{ \nu_{(x,y)}\otimes\nu_{(x',y')} \}_{(x,y, x',y') \in (\Omega\times Q)^2}$ and
		\begin{equation}
			\label{eq3}
			\lim_{k \rightarrow +\infty} I_{\varepsilon_{n_k}} (u_{n_k})= \liminf_{n \rightarrow +\infty} I_{\varepsilon_n}(u_n).
		\end{equation}
		We remark that $\{ u_{n_k}\}$ is equi-integrable since it is bounded in $L^p(\Omega, \R^d)$ and $p > 1$. Thus by the Fundamental Theorem on Young Measures (Theorem \ref{theoremYoung}) we get that for every $A \in \mathcal{A}(\Omega)$,
		\[ \int_A u(x) dx = \lim_{k \rightarrow +\infty} \int_A u_{n_k}(x) dx = \int_A \int_Q \int_{\R^d} \xi d\nu_{(x,y)}(\xi)dydx.\]
		By the arbitrariness of the set $A$, it follows that 
		\begin{equation}
			\label{eq1}
			u(x) = \int_Q \int_{\R^d} \xi d\nu_{(x,y)}(\xi)dy \text{ a.e. in } \Omega.
		\end{equation}
		As a consequence of Corollary \ref{corollary}, $\{ \nu_{(x,y) }\}_{(x,y) \in \Omega \times Q}$ is a two-scale Young measure and, by \eqref{eq1}, we also have that $\nu \in \mathcal{M}_u$. Applying now  Theorem \ref{theoremBarchiesi}-$i)$ we get that 
		\begin{align*}
			&\lim_{k \rightarrow +\infty} \iint_{\Omega \times \Omega} W\left( x, x', \left\langle \frac{x}{\varepsilon_{n_k}}\right\rangle, \left\langle \frac{x'}{\varepsilon_{n_k}}\right\rangle, u_{n_k}(x), u_{n_k}(x')\right)dx dx'\\
			&\ge \iint_{\Omega \times \Omega} \iint_{Q \times Q} \iint_{\R^d \times \R^d}  W\left( x, x', y, y', \xi, \xi'\right)d\nu_{(x,y)}(\xi)d\nu_{(x',y')}(\xi')dydy'dxdx' \\
			&\ge \inf_{\nu \in \mathcal{M}_u} \iint_{\Omega \times \Omega} \iint_{Q \times Q} \iint_{\R^d \times \R^d}  W\left( x, x', y, y', \xi, \xi'\right)d\nu_{(x,y)}(\xi)d\nu_{(x',y')}(\xi')dydy'dxdx'.
		\end{align*}
		
		Hence, by \eqref{eq2}, \eqref{eq3} and the arbitrariness of $\eta$ we get the desired result. Gathering \eqref{gammalimsup} and \eqref{gammaliminf} , we obtain that 
		\begin{align*}
			&\Gamma-\lim_{n \rightarrow +\infty} I_{\varepsilon_n}(u)\\
			&= \inf_{\nu \in \mathcal{M}_u} \iint_{\Omega \times \Omega} \iint_{Q \times Q} \iint_{\R^d \times \R^d}  W\left( x, x', y, y', \xi, \xi'\right)d\nu_{(x,y)}(\xi)d\nu_{(x',y')}(\xi')dydy'dxdx'.
		\end{align*}
		
		To conclude the proof of the upper bound, we have to prove that the minimum is attained. Consider a recovering sequence $\{ \bar{u}_n\} \subset L^p(\Omega, \R^d)$ for $\Gamma-\lim_n I_{\varepsilon_n}(u)$.  Arguing exactly as before we can assume that (a subsequence of) $\{ \bar{u}_n\}$ generates a two-scale Young measure $\{ \nu_{(x,y)}\}_{(x,y) \in \Omega \times Q}$, that $\nu \in \mathcal{M}_u$ and the couple $\{(u_n(x), u_n(x')\}_n$ generates $\{\nu_{(x,y)}\otimes \nu_{(x',y')}\}_{x,y,x',y')\in (\Omega \times Q)^2}$, and 
		
		$\{ W(\cdot, \cdot', \langle \cdot/ \varepsilon_{n_k} \rangle, \langle \cdot '/ \varepsilon_{n_k} \rangle, u_{n_k}(\cdot), u_{n_k}(\cdot'))\}$ 
		is equi-integrable. According to Theorem \ref{theoremBarchiesi}-ii), we get 
		\begin{align*}
			\Gamma-\lim_{n \rightarrow +\infty} I_{\varepsilon_n}(u) = &\lim_{n \rightarrow +\infty} \iint_{\Omega \times \Omega} W\left(x, x', \left\langle \frac{x}{\varepsilon_n} \right\rangle, \left\langle \frac{x'}{\varepsilon_n} \right\rangle, \bar{u}_n(x), \bar{u}_n(x')\right)dx dx' \\
			=& \iint_{\Omega \times \Omega} \iint_{Q \times Q} \iint_{\R^d \times \R^d}  W\left( x, x', y, y', \xi, \xi'\right)d\nu_{(x,y)}(\xi)d\nu_{(x',y')}(\xi')dydy'dxdx'.
		\end{align*}
		which completes the proof.
	\end{proof}
	
	\begin{remark}
		\label{remBMC}
		\begin{itemize}
			\item[i)] We observe that if in the above theorem $W$ does not depend on $y,y' \in Q$, the above $\Gamma$-convergence result reduces to a relaxation result with respect to the $L^p$-weak convergence, with $I_{\rm hom}$ therein replaced by
			\begin{equation*}
				I_{hom}(u):=\min_{\nu \in \mathcal{M}_u} \iint_{\Omega\times\Omega}\iint_{Q\times Q}\iint_{\R^{d}\times\R^{d}} W(x, x',\xi,\xi')d\nu_{(x,y)}(\xi) d\nu_{(x',y')}(\xi')dydy'dxdx',
			\end{equation*}
			with $\mathcal M_u$ in \eqref{Mu}.
			$I_{\rm hom}$, in turn, becomes
			\begin{equation}\label{Ihom2}
				I_{\rm hom}(u)=\min_{\mu \in \mathcal{M'}_u} \iint_{\Omega\times\Omega}\iint_{\R^{d}\times\R^{d}} W(x, x',\xi,\xi')d\mu_{x}(\xi) d\mu_{x'}(\xi')dxdx',
			\end{equation}
			where 
			\begin{align}
				\label{M'u}
				\nonumber \mathcal{M'}_u := \Big\{ \mu \in L^{\infty}_w(\Omega, \mathcal{M}(\R^d)): \{ \mu_{x}\}_{x\in \Omega}   \text{ is a Young measure such that} \int_{\R^d} \xi d\nu_{x}(\xi) = u(x) \Big\}.
			\end{align}  
			
			\noindent The above equality is easily obtained in view of Definition \ref{def-two-scale}, which ensures that any measure $\{\nu_{(x,y)}\}_{(x,y)\in \Omega \times Q} \in \mathcal M_u$ is such that $\nu_{(x,y)}\otimes dy= \mu_x \in {\mathcal M}'_u$.
			\item[ii)] We observe that the same strategy adopted in the proof of Theorem \ref{hom}, would lead directly to \eqref{Ihom2}, without using two-scale Young measures but adopting the classical Young measures generated by sequence in $L^p(\Omega;\mathbb R^d)$. 
			It suffices to replace Theorem \ref{theoremBarchiesi} by Theorem \ref{theoremYoung}(iv)-(v) and the characterization of Young measures proved in \cite[Theorem 7.7]{Pedregalbook}.
			
			\item[iii)] Observe that equation \ref{Ihom2} provides a statement analogous of \cite[Theorem 6.1]{BMC18} replacing the narrow convergence by the $L^p$-weak convergence, under a slightly more general set of assumptions on $W$, cf. Remarks \ref{remark1}, \ref{remBarchiesi} and \ref{remark2}. 
		\end{itemize}
	\end{remark}
	
	We point out that analogous arguments as those in the proof of Theorem \ref{hom} allow us to obtain a $\Gamma$-convergence result in terms of a suitable narrow convergence (see Remark \ref{narrow}). In order to do so, a preliminary step is needed, i.e. a suitable extension of the functionals $I_\varepsilon$ in \eqref{functI} to $\mathcal Y^p(\Omega;\mathbb T^N\times\mathbb R^d)$, where we recall that $\mathbb T^N$ represents the $N$-dimensional torus in $\mathbb \R^N$.
	
	\begin{corollary}
		\label{cornar}
		Let $p$, $\Omega$, $W$  and $\{I_\varepsilon\}_\varepsilon$ be defined as in Theorem \ref{hom}. Let ${\mathcal Y}^p(\Omega;\mathbb T^N\times \mathbb R^d)$ be the space introduced in Remark \ref{narrow} and let $\bar{I}_\varepsilon:\mathcal{Y}^p(\Omega,\mathbb T^N\times \R^d)\to \R\cup\{\infty\}$
		\begin{equation}\label{Ibarepsilon}
			\bar{I}_\varepsilon(\mu) := \left\{
			\begin{array}{ll} I_\varepsilon(u) &\hbox{ if } \mu= \{\delta_{(\langle\frac{x}{\varepsilon}\rangle, u(x))}\}_{x \in \Omega}, \\
				+\infty &\hbox{ otherwise}
			\end{array}
			\right.
		\end{equation}
		
		Then, $\{{\bar I}_\varepsilon\}_\varepsilon$ $\Gamma$-converges, with respect to the narrow convergence, i.e. testing with functions in $L^1(\Omega;C_0(\mathbb T^N\times \mathbb R^d))$,
		to $\overline{I}_{hom}:\mathcal Y^p(\Omega;\mathbb T^N\times\mathbb R^d) \to [0,+\infty]$, where 
		\begin{equation*}
			\overline{I}_{hom}(\mu)=\left\{
			\begin{array}{ll}
				\iint_{\Omega \times \Omega}\iint_{(Q\times \mathbb R^d)^2} W(x,x',(y,\xi), (y',\xi'))d \mu_x(y,\xi)d \mu_{x'}(y',\xi') dx dx', 
				\\ \\
				\hspace{5cm}
				\hbox{ if }\{\mu_{x}\}_{x\in \Omega}= \{\nu_{(x,y)}\}_{(x,y)\in \Omega \times Q}\otimes d y,\\
				\\
				+\infty \hbox{ otherwise,}
			\end{array}
			\right.
		\end{equation*}
		with $\nu$ a two-scale Young measure.
	\end{corollary}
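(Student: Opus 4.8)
The plan is to reduce the statement to Theorem \ref{hom} by exploiting the dictionary, recalled in Remark \ref{narrow} iii)--iv), between narrow convergence in $\mathcal Y^p(\Omega;\mathbb T^N\times\mathbb R^d)$ and the generation of two-scale Young measures in the sense of Definition \ref{def-two-scale}. First I would observe that, by the very definition \eqref{Ibarepsilon}, $\bar I_\varepsilon(\mu_\varepsilon)$ is finite only when $\mu_\varepsilon=\{\delta_{(\langle\cdot/\varepsilon\rangle,u_\varepsilon)}\}_{x\in\Omega}$ for some $u_\varepsilon\in L^p(\Omega,\mathbb R^d)$, in which case $\bar I_\varepsilon(\mu_\varepsilon)=I_\varepsilon(u_\varepsilon)$. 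For such measures narrow convergence $\mu_\varepsilon\overset{nar}{\rightharpoonup}\mu$ is, by Remark \ref{narrow} iii)--iv), equivalent to saying that $\{(\langle\cdot/\varepsilon\rangle,u_\varepsilon)\}$ generates the Young measure $\mu$; and whenever $\{u_\varepsilon\}$ is bounded in $L^p$---which the coercivity in \eqref{pgrowth} guarantees along any finite-energy sequence---Corollary \ref{corollary} forces $\mu$ to have the product form $\mu_x=\nu_{(x,y)}\otimes dy$ with $\{\nu_{(x,y)}\}_{(x,y)\in\Omega\times Q}$ a two-scale Young measure. This identification would be the structural backbone of the argument.

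For the $\Gamma$-liminf inequality I would fix $\mu$ and a sequence $\mu_\varepsilon\overset{nar}{\rightharpoonup}\mu$, assuming $\liminf_\varepsilon\bar I_\varepsilon(\mu_\varepsilon)<+\infty$ (otherwise there is nothing to prove). Passing to a subsequence attaining the liminf, each competitor is of the form $\mu_\varepsilon=\{\delta_{(\langle\cdot/\varepsilon\rangle,u_\varepsilon)}\}$ with $\{u_\varepsilon\}$ bounded in $L^p$, so by the first step $\mu=\nu_{(x,y)}\otimes dy$ for a two-scale Young measure $\nu$ and $\bar I_{hom}(\mu)$ reduces to the double integral of the statement. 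Proposition \ref{Pedregal1997} then yields that $\{(u_\varepsilon(x),u_\varepsilon(x'))\}$ generates $\nu_{(x,y)}\otimes\nu_{(x',y')}$, whence Theorem \ref{theoremBarchiesi} i) gives
\[
\liminf_\varepsilon I_\varepsilon(u_\varepsilon)\ge\iint_{\Omega\times\Omega}\iint_{(Q\times\mathbb R^d)^2} W(x,x',(y,\xi),(y',\xi'))\,d\mu_x(y,\xi)\,d\mu_{x'}(y',\xi')\,dxdx'=\bar I_{hom}(\mu).
\]
When $\mu$ is not of product form one has $\bar I_{hom}(\mu)=+\infty$, and no finite-energy sequence can narrowly converge to it, so the inequality holds trivially. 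This is precisely the translation of the $\Gamma$-liminf step of Theorem \ref{hom}.

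For the recovery sequence, if $\bar I_{hom}(\mu)=+\infty$ any narrowly converging sequence works; otherwise I would write $\mu=\nu_{(x,y)}\otimes dy$ with $\nu$ a two-scale Young measure. Given a vanishing sequence $\{\varepsilon_n\}$, Definition \ref{def-two-scale} provides a generating sequence $\{u_n\}\subset L^p(\Omega,\mathbb R^d)$, and setting $\mu_n:=\{\delta_{(\langle\cdot/\varepsilon_n\rangle,u_n)}\}$ gives $\mu_n\overset{nar}{\rightharpoonup}\mu$ by Remark \ref{narrow}. After replacing $\{u_n\}$ by a $p$-equi-integrable sequence via the Decomposition Lemma \cite[Lemma 8.13]{FLbook} (which does not alter the generated measure), the product integrand becomes equi-integrable thanks to \eqref{pgrowth}, and Proposition \ref{Pedregal1997} together with Theorem \ref{theoremBarchiesi} ii) yields $\bar I_{\varepsilon_n}(\mu_n)=I_{\varepsilon_n}(u_n)\to\bar I_{hom}(\mu)$. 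Combining the two inequalities and invoking Definition \ref{defGammafamily} over arbitrary vanishing sequences would complete the proof.

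The hard part will be the structural identification of the first paragraph: that a narrow limit of finite-energy competitors is necessarily of the product form $\nu_{(x,y)}\otimes dy$ coming from a two-scale Young measure. Once this is secured, the two $\Gamma$-inequalities are essentially verbatim transcriptions of the corresponding steps in the proof of Theorem \ref{hom}, with narrow convergence playing the role of $L^p$-weak convergence and with the well-posedness of the extension \eqref{Ibarepsilon} ensuring that only diagonal-type measures carry finite energy.
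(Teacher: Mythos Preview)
Your proposal is correct and follows essentially the same strategy as the paper's proof. The only noteworthy difference is in the $\Gamma$-liminf step: you reproduce the argument from the proof of Theorem \ref{hom} (Proposition \ref{Pedregal1997} plus Theorem \ref{theoremBarchiesi}(i)), whereas the paper shortcuts this by invoking directly the narrow lower semicontinuity of the double integral established in \cite[Proposition 3.7]{BMC18}; both routes are valid and lead to the same conclusion.
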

	\noindent
	
	Before proving the result we observe that, when $\overline I_{\hom}$ is finite, the right hand side coincides with
	$$
	\iint_{\Omega\times\Omega}\iint_{Q\times Q}\iint_{\R^{d}\times\R^{d}} W(x, x',y,y',\xi,\xi')d\nu_{(x,y)}(\xi) d\nu_{(x',y')}(\xi')dydy'dxdx'.$$

	\begin{proof}
		The proof follows the same argument as \cite[Theorem 6.1]{BMC18}.
		Let $\mu \in {\mathcal Y}^p(\Omega;\mathbb T^N\times \mathbb R^d)$ and $\{\mu_n\} \subset {\mathcal Y}^p(\Omega;\mathbb T^N\times \mathbb R^d)$ be a sequence which converges narrowly to $\mu$. With no loss of generality we can assume that $\liminf_{n\to \infty}{\overline I}_{\varepsilon_n}(\mu_{\varepsilon_n})<+ \infty$. Then, up to passing to a suitable subsequence (not relabelled) for which the liminf is a limit, $\mu_n=\delta_{(\langle\frac{x}{\varepsilon_n}\rangle, u_n(x))}$ for a suitable sequence $\{u_n\}\subset L^p(\Omega;\mathbb R^d)$.
		
		By \eqref{pgrowth}, $\{u_n\}$ is bounded in $L^p(\Omega;\mathbb R^d)$, hence (up to a subsequence) weakly convergent, then as observed in \cite{Barchiesi2006} $\left\{\left(\langle \frac{x}{\varepsilon_n}\rangle, u_n\right)\right\}$ is tight and in view of Theorem \ref{mainresult} the narrow limit $\mu$ is of the type $\nu_{(x,y)}\otimes dy$ for a suitable two-scale Young measure $\{\nu_{(x,y)}\}_{(x,y)\in \Omega \times Q}$.
		Hence the lower semicontinuity of $\iint_{\Omega \times \Omega}\iint_{(Q\times \mathbb R^d)^2} W(x,x',(y,\xi), (y',\xi'))d \mu_x(y,\xi)d \mu_{x'}(y',\xi') dx dx'$ with respect to the narrow convergence, observed in \cite[Proposition 3.7]{BMC18} proves the lower bound. 
		
		For what concerns the upper bound, if $\mu$ is of the type $\nu_{(x,y)}\otimes dy$ for a suitable two-scale Young measure $\{\nu_{(x,y)}\}_{(x,y)\in \Omega \times Q}$, then Theorem \ref{mainresult} ensures the existence of a sequence $\{u_n\}\subset L^p(\Omega;\mathbb R^d)$, weakly convergent in $L^p(\Omega)$ and  such that $\left\{\left(\langle \frac{x}{\varepsilon_n}\rangle, u_n\right)\right\}$ generates the two-scale Young measure $\nu_{(x,y)}$. The Decomposition Lemma ensures that the $\{u_n\}$ can be chosen $p$-equi-integrable. 
		Clearly, by \cite{Barchiesi2006}, the sequence is also narrowly convergent to $\mu_{x}=\nu_{(x,y)}\otimes d y$. Then \eqref{pgrowth} ensures that the integrand of $I_{\varepsilon_n}(u_n)$ is equi-integrable and, exactly as in the proof of Theorem \ref{hom} we can invoke Theorem \ref{theoremBarchiesi}(ii) which, in turn, guarantees the convergence towards 
		$\iint_{\Omega \times \Omega}\iint_{(Q\times \mathbb R^d)^2} W(x,x',(y,\xi), (y',\xi'))d \mu_x(y,\xi)d \mu_{x'}(y',\xi') dx dx'$, 
		which concludes the proof in this case.
		
		Finally if $\mu$ is not of the type $\nu_{(x,y)}\otimes dy$ for a suitable two-scale Young measure $\nu$, then Theorem \ref{mainresult} says that $\{\nu_{(x,y)}\}_{(x,y)\in \Omega \times Q}$ cannot be generated by any $\{\delta_{(\langle\frac{x}{\varepsilon_n}\rangle, u_n(x))}\}$, hence it cannot be the narrow limit of such a sequence.  Consequently, the $\{\mu_n\}$ narrowly converging to $\mu$ is such that the energy $\overline I_{\varepsilon_n}(\mu_n)=+\infty$ for any $n \in \mathbb N$. This concludes the proof.
		
	\end{proof}

	\section*{Acknowledgements}
	The authors gratefully acknowledge support from INdAM GNAMPA.
	A. T. have been partially supported through the INdAM-GNAMPA 2025 project "Minimal surfeces: the Plateau problem and behind" CUP:E5324001950001, and partially supported by the project Geometric-Analytic Methods for PDEs and Applications (GAMPA) - funded by European Union - Next Generation EU within the PRIN 2022 program (D.D. 104 - 02/02/2022 Ministero dell’Università e della Ricerca). This manuscript reflects only the authors’ views and opinions and the Ministry cannot be considered responsible for them.
	G. B. acknowledges the partial support of the INdAM-GNAMPA 2024 Project "Composite materials and microstructures" CUP E53C23001670001.
	E.Z. acknowledges the support of Piano Nazionale di Ripresa e Resilienza (PNRR) - Missione 4 ``Istruzione e Ricerca''
	- Componente C2 Investimento 1.1, ``Fondo per il Programma Nazionale di Ricerca e
	Progetti di Rilevante Interesse Nazionale (PRIN)" - CUP 853D23009360006.  She also acknowledges the partial support of the INdAM - GNAMPA Project 2025 ``Metodi variazionali per problemi dipendenti da operatori frazionari isotropi e anisotropi'' coordinated by Alessandro Carbotti, CUP ES324001950001.
	
	\color{black}

\end{document}